\documentclass[preprint,review,10pt]{elsarticle}
\usepackage{amssymb}
\usepackage{amsfonts}
\usepackage{amsmath}
\usepackage{amsthm}
\usepackage{color}
\usepackage{graphicx}
\usepackage{epsfig,mathrsfs}
\usepackage[bf,SL,BF]{subfigure}
\usepackage{fancyhdr}
\usepackage{CJK}
\usepackage{caption}
\usepackage{wrapfig}
\usepackage{cases}
\usepackage{subfigure}
\graphicspath{{figure/}}

\newtheorem{theorem}{Theorem}[section]
\newtheorem{lemma}{Lemma}[section]
\newtheorem*{lemma*}{Lemma A}

\newtheorem{proposition}{Proposition}[section]

\numberwithin{equation}{section}

\renewcommand{\eqref}[1]{(\ref{#1})}

 \newtheorem{remark}{Remark}[section]
 
\textwidth 16cm \textheight 24cm \hoffset -2cm \voffset -2cm

\pagestyle{fancy}
\fancyhead{} 

 \allowdisplaybreaks

\begin{document}
 \pagenumbering{arabic}

\begin{frontmatter}
\title{{\bf \noindent Analysis of  (shifted) piecewise quadratic polynomial  collocation for nonlocal diffusion model}}
\author{Minghua Chen$^1$, Jiankang Shi$^1$ and  Xiaobo Yin$^2$}
\cortext[cor2]{Corresponding author. E-mail: chenmh@lzu.edu.cn.}
\address{$^1$School of Mathematics and Statistics,
Lanzhou University, Lanzhou 730000, P. R. China

$^2$Hubei Key Laboratory of Mathematical Sciences, School of Mathematics and Statistics, Central China Normal University, Wuhan 430079, China }
\date{}

\begin{abstract}
The piecewise quadratic polynomial collocation is used to approximate the nonlocal model, which generally  obtain  the {\em nonsymmetric indefinite system} [Chen et al., IMA J. Numer. Anal., (2021)].
In this case, the discrete maximum principle is not satisfied, which might be trickier for the stability analysis of the high-order numerical schemes [D'Elia et al., Acta Numer., (2020);
Leng et al., SIAM J. Numer. Anal., (2021)].
Here, we present the modified (shifted-symmetric) piecewise quadratic polynomial collocation for solving the linear nonlocal diffusion model, which has the {\em symmetric positive definite system} and satisfies the discrete maximum principle.
Using Faulhaber's formula and Riemann zeta function, the perturbation error for symmetric positive definite system  and nonsymmetric indefinite  systems are given.
Then the detailed proof of the convergence analysis for the nonlocal models with the general horizon parameter $\delta=\mathcal{O}\left(h^\beta\right)$, $\beta\geq0$ are provided.
More concretely, the global error is $\mathcal{O}\left(h^{\min\left\{2,1+\beta\right\}}\right)$ if $\delta$ is not set as a grid point, but it shall recover $\mathcal{O}\left(h^{\max\left\{2,4-2\beta\right\}}\right)$ when $\delta$ is set as a grid point.
We also prove that the shifted-symmetric scheme is asymptotically compatible, which has the global error $\mathcal{O}\left(h^{\min\left\{2,2\beta\right\}}\right)$ as $\delta,h\rightarrow 0$.
The numerical experiments (including two-dimensional case) are performed to verify the convergence.

\medskip
\noindent {\bf Keywords:}
Nonlocal model, shifted-symmetric collocation, asymptotically compatible scheme, stability and convergence analysis.
\end{abstract}
\end{frontmatter}

\section{Introduction}\label{sec:into}
Nonlocal diffusion problems have been used to model very different scientific phenomena, which can either complement or serve as an alternative to classical  partial differential equations (PDEs).
The integral formulations of spatial interactions in nonlocal models with nonlocal Dirichlet volume constraint can naturally account for nonlocal interactions effects and allow more singular solutions  \cite{Du:2019}.
For example, nonlocal peridynamic (PD) is becoming an attractive emerging tool for the multiscale material simulations of crack nucleation and growth, fracture and failure of composites \cite{Silling:175--209}.
Mathematical analysis of PD models and other related nonlocal models, such as nonlocal diffusion, can be found in \cite{Du:2019}.
In particular, nonlocal models \cite{Gunzburger:676--696} with a finite range $\delta$ of interaction serve as a bridge between fractional PDEs \cite{Chen:1418--1438,Defterli:342--360,DDGGTZ:20} with $\delta\rightarrow\infty$ and local PDEs with $\delta\rightarrow0$.
For $\delta>0$, compared with classical PDE models, the complexities are introduced by the nonlocal interactions.
As $\delta\rightarrow0$, the nonlocal effect vanishes and the zero-horizon limit of nonlocal PD models reduce to a classical local PDEs model when the latter is well-defined.
Such limiting behavior provides connections and consistencies between nonlocal and local models, and has immense practical significance especially for multiscale modeling and simulations.
If $\delta$ to be proportional to $h$, as $\delta, h\rightarrow0$, the concept of asymptotically compatible  schemes have been introduced in \cite{Tian:1641--1665,TD:20,Zhang:52--68}.

There has been much recent interest in developing numerical algorithms for nonlocal models, including finite difference \cite{Tian:46--67,Tian:3458--3482}, finite element \cite{DuYIN:1913--1935,Tian:3458--3482,Tian:1641--1665,Zhang:213--236}, collocation method \cite{Tian:815--825,Wang:114--126,Zhang:52--68}, meshfree method \cite{Lehoucq:366--380}, multigrid method \cite{Chen:869--890} and conjugate gradient method \cite{Wang:114--126}.
Among various techniques for solving integral nonlocal problems, collocation methods are the simplest, since they only need a single integration and are much simpler to implement on a computer.
It is well-known that the standard piecewise linear polynomial collocation is used to approximate the nonlocal problems, which is not asymptotically compatible \cite{Tian:1641--1665,Zhang:52--68}.
To overcome it, the quadrature-based  piecewise linear polynomial collocation-type (finite difference schemes) are introduced \cite{Tian:1641--1665} and extended to the multidimensional case \cite{DTTY:607--625}.
Recently, a new corrected quadrature rule based on piecewise linear polynomial collocation is derived, which is also asymptotically compatible scheme \cite{Zhang:52--68}.

To seek numerical discretization of the strong form (e.g., collocation, finite difference), it is difficult to show stability of the high-order numerical schemes while trying to keep the asymptotically compatible property \cite{DDGGTZ:20,LTTF:21}, since the discrete maximum principle is not satisfied.
How about piecewise quadratic polynomial (high-order) collocation with {\em standard} quadrature rule?
We know that the piecewise quadratic polynomial collocation is used to approximate the nonlocal model, which derives the {\em nonsymmetric indefinite system} \cite{CCNWL:20,Chen:1--30}; namely, the discrete maximum principle is not satisfied.
How about the horizon parameter $\delta$ is not set as the grid point?
Since the whole key point of asymptotically compatible scheme is that convergent numerical methods which does not require any relation between mesh size/grid points and $\delta$ \cite{Du:2019}.
Moreover, the horizon of the material is a physical property of the material of the finite bar and should be independent of the computational mesh size $h$ \cite{WT:7730--7738}.
In fact, it is difficult to require that $\delta$ is set as the grid point in the multidimensional space.
The core object of this paper is to fill in this gap.
The main contribution of this work as follows.
We first present the shifted-symmetric  piecewise quadratic polynomial collocation  for nonlocal problem, which has the {\em symmetric positive definite system} and satisfies the discrete maximum principle.
Then the detailed proof of the convergence analysis for the nonlocal models with the general  horizon parameter $\delta=\mathcal{O}\left(h^\beta\right)$, $\beta\geq0$ are provided.
Namely, the global truncation  error is  $\min\left\{2,1+\beta\right\}$-order convergence if $\delta$ is not set as a grid point.
However, it shall recover $\max\left\{2,4-2\beta\right\}$-order convergence when $\delta$ is set as a grid point.
For asymptotically compatible scheme,  the global error is  $\min\left\{2,2\beta\right\}$-order convergence as $\delta,h\rightarrow 0$.
Finally, the two-dimensional numerical experiments  are performed to verify the convergence rate.

The paper is organized as follows. In the next section, we provide the standard collocation  and shifted-symmetric collocation
for the nonlocal problems by the piecewise quadratic polynomial, respectively.
In Section $\S3$, we analyze the local truncation errors with the general horizon parameter.
The global convergence rates for nonlocal models are detailed proved in Section $\S4$.
To show the effectiveness of the presented schemes, results of numerical experiments (including two-dimensional case) are reported in Section $\S5$.

\section{Collocation Method and Numerical Schemes}\label{sec:manu}

Let us consider the nonlocal model with a volumetric constraint domain \cite{Gunzburger:676--696}
\begin{equation}\label{2.1}
\left\{\begin{split}
- \mathcal{L}_\delta u_\delta(x)      &=f_{\delta}(x)   &  ~~{\rm in}  & ~~\Omega,\\
                u_\delta(x)           &=g_{\delta}(x)   &  ~~{\rm on}  & ~~\Omega_\mathcal{I}.
\end{split}\right.
\end{equation}
For the sake of simplicity, we denote $u_\delta(x)$ as $u(x)$ . Here
\begin{equation}\label{2.2}
\mathcal{L}_\delta u(x)=\int_{B_\delta(x)}[u(y)-u(x)]\gamma_\delta(x,y)dy,
~~\forall x \in \Omega
\end{equation}
and  $B_\delta(x)=\{y \in \mathbb{R}: |y-x|<\delta \}$ denotes a neighborhood centered at $x$ of radius $\delta$.
To keep the expression simple below we assume we are on the unit interval $\Omega$ with the volumetric constraint domain $\Omega_\mathcal{I}=[-\delta,0]\cup[1,1+\delta]$ for  one-dimensional model \eqref{2.1}. The specific form of such nonlocal interactions is prescribed by a nonnegative and radial kernel function $\gamma_\delta=\gamma_\delta(|y-x|) $.
We notice that there are many different choices to prescribe $\gamma_{\delta}(z)$ for nonlocal problem (\ref{2.1}),
such as a constant interaction kernel, weakly singular kernel et al., see the literatures \cite{DuYIN:1913--1935,Tian:3458--3482,Zhang:52--68}.
For illustration, we use the constant interaction kernel, a choice often used in many existing studies \cite{Chen:869--890,Du:2017,SP:11,Tian:3458--3482}, namely,
\begin{equation}\label{2.3}
\gamma_{{\delta}}(z)=\frac{3}{{\delta}^{3}},
\end{equation}
which has the finite second order moment of $\gamma_\delta$, i.e.,
\begin{equation}\label{2.4}
 C_\delta=\int_{0}^\delta z^2\gamma_\delta(z)dz=1,
\end{equation}
where $C_\delta$ is a well-defined elastic modulus or diffusion constant.

 Then, we can rewrite $\eqref{2.2}$ as
\begin{equation}\label{2.6}
\mathcal{L}_\delta u(x)  =\frac{3}{\delta^{3}}\int_{x-\delta}^{x+\delta} u(y)-u(x)dy =\frac{3}{\delta^{3}}\int_{0}^{\delta} u(x+z)-2u(x)+u(x-z)dz.
\end{equation}

Noted that if $u(x)$ is smooth enough or say $u\in C_c^\infty$, there exists \cite[p.\,92]{Du:2017}
\begin{equation}\label{2.5}
\mathcal{L}_\delta u(x)= C_\delta u''(x)  + \mathcal{O} \left(\delta^ 2\right).
\end{equation}

\subsection{Standard collocation method for integral operator \eqref{2.2}}
Now, we introduce and discuss the discretization scheme of (\ref{2.1}). Denote the horizon $\delta$ and the mesh size $h$ as
\begin{equation}\label{2.7}
\delta=\left\{
\begin{split}
&\widetilde{\delta}+r_0 h , ~~~ \delta\geq h,\\
&\widetilde{\delta}-r_0 h ,~ ~~ \delta\leq h
\end{split}
\right.
 ~~{\rm with}~~\widetilde{\delta}=rh,~~0\leq r_0 <1.
\end{equation}
Here the ratio $r=\big\lfloor\delta/h\big\rfloor\ge1$ if $\delta\geq h$ and $r=\big\lceil\delta/h\big\rceil=1$
if $\delta\le h$.
Let the mesh points $x_i=ih$, $h=1/(N+1)$, $i \in \mathcal{N}$ and
\begin{equation*}
\begin{split}
&\mathcal{N}     =\left\{-r,{-r+\frac{1}{2}},\ldots,0,\frac{1}{2},\ldots,{N+\frac{1}{2}},{N+1},\ldots,{N+r+\frac{1}{2}},N+r+1\right\},\\
&\mathcal{N}_{in}=\left\{{\frac{1}{2}},1,\frac{3}{2}, \ldots, N-\frac{1}{2}, N,{N+\frac{1}{2}}\right\},~~\mathcal{N}_{out}=\mathcal{N}\setminus\mathcal{N}_{in}.
\end{split}
\end{equation*}
\begin{remark}\label{remarkad2.1}
We choose the exact solution of the nonlocal diffusion problem (\ref{2.1}) as $u_\delta(x)=x^2(1-x^2)$. This naturally leads to a $\delta$-dependent right-hand side $f_{\delta}(x)=12x^2-2+\frac{6}{5}\delta^2$,
which yields $f_{\delta}(x)-f_{\widetilde{\delta}}(x)=\mathcal{O}\left(h\right)$ if a constant $\delta\neq \widetilde{\delta}$.
We will see that the global error is $\mathcal{O}\left(h^4\right)$ if $\delta$ is set as the grid point, but it shall drop down to $\mathcal{O}\left(h\right)$ if $\delta$ is not set as the grid point.
It implies that we shall need to carefully consider the case where $\delta$ is not set as the grid point in \eqref{2.7}, i.e., $r_0 \neq 0$.
\end{remark}

Let the piecewise quadratic basis functions $\phi_j(x)$ and $\phi_{j+\frac{1}{2}}(x)$ be given in \cite[p.\,37]{Aikinson:2009} and $u_{Q}(x)$ be the piecewise Lagrange quadratic interpolant of $u(x)$, i.e.,
\begin{equation}\label{2.8}
u_{Q}(x) =\sum_{j=-r}^{N+r+1}u(x_j)\phi_j(x)+\sum_{j=-r}^{N+r}u(x_{j+\frac{1}{2}})\phi_{j+\frac{1}{2}}(x).
\end{equation}

Noted that the piecewise quadratic polynomial \eqref{2.8} is used to approximate the nonlocal operator $\mathcal{L}_{\widetilde{\delta}}u(x)$ rather than $\mathcal{L}_\delta u(x)$ in \eqref{2.6}, which plays a key role in the design of numerical scheme for nonlocal models \eqref{2.1} when $\delta$ is not set as the grid point, see Remark \eqref{remarkad2.1}.
Let us first consider the following piecewise quadratic interpolation of $\mathcal{L}_{\widetilde{\delta}} u(x)$ (and $\mathcal{L}_{{\delta}} u(x)$ later on).
\begin{equation}\label{2.9}
\mathcal{L}_{\widetilde{\delta},h} u(x_\frac{i}{2})
=\frac{3}{\widetilde{\delta}^{3}}\int_{x_{\frac{i}{2}-r}}^{x_{\frac{i}{2}+r}}\sum_{j=-r}^{N+r+1}u(x_j)\phi_j(y) + \sum_{j=-r}^{N+r}u (x_{j+\frac{1}{2}} )\phi_{j+\frac{1}{2}}(y) - u (x_{\frac{i}{2}} )dy
\end{equation}
with $1\leq i\leq 2N+1 $. Taking $\eta^{h}_{\widetilde{\delta}}=\frac{h}{2\widetilde{\delta}^{3}}$, we can divide $\eqref{2.9}$ into two parts as follows:
\begin{equation}\label{ada2.11}
\begin{split}
\mathcal{L}_{\widetilde{\delta},h} u(x_i)
&=\eta^{h}_{\widetilde{\delta}}\left[\sum_{j=i-r}^{i+r}a_{|i-j|}u(x_j)+\sum_{j=i-r}^{i+r-1}a_{|i-j-\frac{1}{2}|}u(x_{j+\frac{1}{2}})\right]\\
&=\eta^{h}_{\widetilde{\delta}} \left[\sum_{m=-r}^{r}{a_m}u(x_{i+m})+\sum_{m=-r}^{r-1}{a_{m+\frac{1}{2}}}u(x_{i+m+\frac{1}{2}})\right],~~1\leq i\leq N
\end{split}
\end{equation}
with
\begin{equation}\label{aadd2.10}
\begin{split}
 & a_{0}=2-12r, \quad a_{m}=2,~~1\leq m\leq r-1, \quad a_{r}=1,\\
 & a_{m+\frac{1}{2}}=4, ~~0\leq m\leq r-1.
\end{split}
\end{equation}
On the other hand, we have
\begin{equation}\label{addd2.0012}
\begin{split}
\mathcal{L}_{\widetilde{\delta},h} u(x_{i+\frac{1}{2}})
&=\eta^{h}_{\widetilde{\delta}}\left[\sum_{j=i-r}^{i+r+1}c_{|i+\frac{1}{2}-j|-\frac{1}{2}}u(x_j)+\sum_{j=i-r}^{i+r}d_{|i-j|}u(x_{j+\frac{1}{2}})\right]\\
&=\eta^{h}_{\widetilde{\delta}}\left[\sum_{m=-r}^{r+1}c_{|\frac{1}{2}-m|-\frac{1}{2}}u(x_{i+m})+\sum_{m=-r}^{r}d_{|m|}u(x_{i+m+\frac{1}{2}})\right],~~0\leq i\leq N
\end{split}
\end{equation}
with
\begin{equation}\label{2.10}
\begin{split}
&c_{m}=2, ~~0\leq m\leq r-2, \quad c_{r-1}=\frac{9}{4}, \quad c_{r}=-\frac{1}{4},\\
&d_{0}=4-12r, \quad d_{m}=4, ~~1\leq m\leq r-1, \quad d_{r}=2.\\
\end{split}
\end{equation}
\begin{remark}
From \eqref{aadd2.10} and \eqref{2.10}, the standard piecewise quadratic polynomial collocation is used to approximate the nonlocal model \eqref{2.1}, which derives the nonsymmetric indefinite system.
In this case, the discrete maximum principle  is not satisfied, which might be trickier for the stability analysis of the high-order numerical schemes \cite{DDGGTZ:20,LTTF:21}.
This phenomenon also arises in \cite{CCNWL:20,Chen:1--30}. This motives us to construct the shifted-symmetric collocation method, which has the symmetric positive definite system  and satisfies the discrete maximum principle.
\end{remark}
\subsection{Shifted-symmetric collocation method for integral operator \eqref{2.2}}
We now construct the shifted-symmetric collocation method for nonlocal integral operator.
Namely, from \eqref{ada2.11}, we have
\begin{equation}\label{2.11}
\begin{split}
\mathcal{L}^{S}_{\widetilde{\delta},h} u(x_i)
&=\mathcal{L}_{\widetilde{\delta},h} u(x_i)\\
&=\eta^{h}_{\widetilde{\delta}} \left[\sum_{m=-r}^{r}{a_m}u(x_{i+m})+\sum_{m=-r}^{r-1}{a_{m+\frac{1}{2}}}u(x_{i+m+\frac{1}{2}})\right],~~1\leq i\leq N,\\
\mathcal{L}^{S}_{\widetilde{\delta},h} u(x_{i+\frac{1}{2}})
&:=\eta^{h}_{\widetilde{\delta}} \left[\sum_{m=-r}^{r}{a_m}u(x_{i+m+\frac{1}{2}})+\sum_{m=-r}^{r-1}{a_{m+\frac{1}{2}}}u(x_{i+m+1})\right],~~0\leq i\leq N
\end{split}
\end{equation}
with $\eta^{h}_{\widetilde{\delta}}=\frac{h}{2\widetilde{\delta}^{3}}.$
Here the coefficients are given in \eqref{aadd2.10}, i.e,
\begin{equation}\label{2.200000}
\begin{split}
 & a_{0}=2-12r, \quad a_{m}=2,~~1\leq m\leq r-1, \quad a_{r}=1,\\
 & a_{m+\frac{1}{2}}=4, ~~0\leq m\leq r-1.
\end{split}
\end{equation}
In particular, if $\delta\leq h$ (i.e., $\widetilde{\delta}=h$), the coefficients in \eqref{2.200000} reduce to
\begin{equation}\label{2.22222}
a_{0}=-10, \quad a_{1}=1, \quad a_{\frac{1}{2}}=4.
\end{equation}
We can easily get
\begin{equation}\label{2.16}
a_{0}+2\sum_{m=1}^{r}a_{m}+2\sum_{m=0}^{r-1}a_{m+\frac{1}{2}}=0.
\end{equation}

\subsection{Shifted-symmetric collocation method for nonlocal model \eqref{2.1}}
Let $u_{\delta,h}(x_{i})$ be the approximated value of $u_{\delta}(x_{i})$ and $f_{\delta,i} = f_{\delta}(x_{i})$.
From \eqref{2.11}, the shifted-symmetric collocation of \eqref{2.1} has the following symmetric systems
\begin{equation}\label{2.17}
\left\{\begin{split}
-\mathcal{L}^{S}_{\widetilde{\delta},h} u_{\delta,h}(x_{i}) &=f_{\delta,i}, ~~ i \in \mathcal{N}_{in},\\
                     u_{\delta,h}(x_{i})                    &=g_{\delta,i}, ~~ i \in \mathcal{N}_{out}.
\end{split}\right.
\end{equation}

For simplicity, we denote $u_{i}$ as $u_{\delta,h}(x_i)$ and $g_{i}$ as $g_{\delta,i}$ with the following sketch that characterizes different variables:
\begin{equation*}
\begin{split}
&\big[
\underset{\text{boundary points}}{\underbrace{  x_{-r+\frac{1}{2}} \ldots  x_{0},}}~~
\underset{\text{interface points}}{\underbrace{ x_{\frac{1}{2}} \ldots  x_{r},}}~
\underset{\text{internal points}}{\underbrace{  x_{r+\frac{1}{2}}\ldots  x_{N-r+\frac{1}{2}},}}~
\underset{\text{interface points}}{\underbrace{ x_{N-r+1}  \ldots  x_{N+\frac{1}{2}},}}~
\underset{\text{boundary points}}{\underbrace{  x_{N+1} \ldots  x_{N+r+\frac{1}{2}}}}
\big]
\end{split}
\end{equation*}
\begin{equation*}
\begin{split}
&\big[
\underset{\text{boundary values}}{\underbrace{  g_{-r+\frac{1}{2}}\ldots   g_{0},}}~~
\underset{\text{interface values}}{\underbrace{ u_{\frac{1}{2}}\ldots  u_{r}},}~
\underset{\text{internal values}}{\underbrace{  u_{r+\frac{1}{2}}\ldots u_{N-r+\frac{1}{2}},}}~
\underset{\text{interface values}}{\underbrace{ u_{N-r+1} \ldots u_{N+\frac{1}{2}},}}~
\underset{\text{boundary values}}{\underbrace{  g_{N+1} \ldots  g_{N+r+\frac{1}{2}}}}
\big].
\end{split}
\end{equation*}
Let
\begin{equation*}
\begin{split}
U_{\delta,h}&=[u_{1},u_{2},\ldots,u_{N},u_{\frac{1}{2}},u_{\frac{3}{2}},\ldots,u_{N+\frac{1}{2}}]^T;\\
F_{\delta,h}&=[f_{\delta,1},f_{\delta,2},\ldots,f_{\delta,N},f_{\delta,\frac{1}{2}},f_{\delta,\frac{3}{2}},\ldots,f_{\delta,N+\frac{1}{2}}]^T.
\end{split}
\end{equation*}
Let
\begin{equation*}
w_{1}=\left(a_{1},a_{2},\ldots,a_{r}\right)^T, ~~w_{2}=\left(a_{\frac{1}{2}},a_{\frac{3}{2}},\ldots,a_{r-\frac{1}{2}}\right)^T, ~~w_{3}=\left(a_{\frac{3}{2}},a_{\frac{5}{2}},\ldots,a_{r-\frac{1}{2}}\right)^T;
\end{equation*}
and
\begin{equation*}
\begin{split}
G_{1}&={\rm triu}\left[{\rm toeplitz}\left(g_{0}, g_{-1}, \ldots, g_{-r+1}\right)\right],\\
G_{2}&={\rm triu}\left[{\rm toeplitz}\left(g_{-\frac{1}{2}}, g_{-\frac{3}{2}}, \ldots, g_{-r+\frac{3}{2}}\right)\right],\\
G_{3}&={\rm triu}\left[{\rm toeplitz}\left(g_{N+1}, g_{N+2}, \ldots, g_{N+r}\right)\right],\\
G_{4}&={\rm triu}\left[{\rm toeplitz}\left(g_{N+\frac{3}{2}}, g_{N+\frac{5}{2}}, \ldots, g_{N+r-\frac{1}{2}}\right)\right],\\
G_{5}&={\rm triu}\left[{\rm toeplitz}\left(g_{-\frac{1}{2}}, g_{-\frac{3}{2}}, \ldots, g_{-r+\frac{1}{2}}\right)\right],\\
G_{6}&={\rm triu}\left[{\rm toeplitz}\left(g_{N+\frac{3}{2}}, g_{N+\frac{5}{2}}, \ldots, g_{N+r+\frac{1}{2}}\right)\right].
\end{split}
\end{equation*}
Similar to the discussion in \cite{Chen:869--890}, we introduce the following auxiliary vector
\begin{equation*}
\begin{split}
F_{\mathcal{V},L}=&\eta^h_{\widetilde{\delta}}\left[\left(G_{1}w_{1}+[G_{2}w_{3};0]\right); (0)_{(N-2r)\times1}; {\rm flip}\left(G_{3}w_{1}+[G_{4}w_{3};0]\right); (0)_{(N+1)\times1}\right],\\
F_{\mathcal{V},R}=&\eta^h_{\widetilde{\delta}}\left[(0)_{N\times1}; \left(G_{1}w_{2}+G_{5}w_{1}\right); (0)_{(N-2r)\times1}; {\rm flip}\left(G_{3}w_{2}+G_{6}w_{1}\right)\right].
\end{split}
\end{equation*}
Then the numerical scheme \eqref{2.17} can be recast as
\begin{equation}\label{2.18}
 A^{S}_{\widetilde{\delta},h}U_{\delta,h}=F^{S}_{\delta,h}~~{\rm with}~~ F^{S}_{\delta,h}=F_{\delta,h}+F_{\mathcal{V},L}+F_{\mathcal{V},R},
\end{equation}
where the stiffness matrix $A^{S}_{\widetilde{\delta},h}$ consists of the following four block-structured matrices with Toeplitz-like blocks,
\begin{equation}\label{2.19}
A^{S}_{\widetilde{\delta},h}=\eta^{h}_{\widetilde{\delta}}
\left[\begin{array}{cc}
\mathcal{A}& \mathcal{B} \\
\mathcal{B}^\mathsf{T}& \mathcal{\tilde{A}}
\end{array}\right]_{(2N+1)\times(2N+1)},
\end{equation}
and
\begin{equation*}
\begin{split}
&\mathcal{A}_{N\times N}=-{\rm toeplitz}(a_{0}, a_{1}, \ldots ,a_{r-1}, a_{r},(0)_{(N-r-1)\times1} );\\
&\mathcal{\tilde{A}}_{(N+1)\times (N+1)}=-{\rm toeplitz}(a_{0}, a_{1}, \ldots ,a_{r-1}, a_{r}, (0)_{(N-r)\times1} );\\
&\mathcal{B}_{N\times (N+1)}=-{\rm toeplitz}\left(\left[w_{2}; (0)_{(N-r)\times1}\right], \left[a_{\frac{1}{2}}; w_{2}; (0)_{(N-r)\times1}\right]\right),
\end{split}
\end{equation*}
where $w_{2}=\left(a_{\frac{1}{2}},a_{\frac{3}{2}},\ldots,a_{r-\frac{1}{2}}\right)^T$ and the coefficients $a_i$ are given in \eqref{2.200000}.

\begin{remark}[Nonsymmetric indefinite system]
From \eqref{ada2.11} and \eqref{addd2.0012}, the standard collocation method  of \eqref{2.1} has the following form
\begin{equation}\label{ad2.22}
\left\{\begin{split}
-\mathcal{L}_{\widetilde{\delta},h} u_{\delta,h}(x_{i}) &=f_{\delta,i}, ~~i \in \mathcal{N}_{in},\\
                  u_{\delta,h}(x_{i})                   &=g_{\delta,i},      ~~i \in \mathcal{N}_{out}.
\end{split}\right.
\end{equation}
Then the numerical scheme \eqref{ad2.22} can be recast as
\begin{equation}\label{adas2.22}
 A^{N}_{\widetilde{\delta},h}U_{\delta,h}=F^{N}_{\delta,h},
\end{equation}
where $F^{N}_{\delta,h}$ is similar computed in \eqref{2.19} and  the stiffness matrix $A^{N}_{\widetilde{\delta},h}$ is defined by
\begin{equation}
A^{N}_{\widetilde{\delta},h}=\eta^{h}_{\widetilde{\delta}}
\left[\begin{array}{cc}
\mathcal{A}& \mathcal{B} \\
\mathcal{C}& \mathcal{D}
\end{array}\right]_{(2N+1)\times(2N+1)}.
\end{equation}
Here
\begin{equation*}
\begin{split}
&\mathcal{D}_{(N+1)\times (N+1)}=-{\rm toeplitz}(d_{0}, d_{1}, \ldots ,d_{r-1}, d_{r}, (0)_{(N-r)\times1} );\\
&\mathcal{C}_{(N+1)\times N}=-{\rm toeplitz}\left( \left[c_0; w; (0)_{(N-r-1)\times1}\right], \left[w; (0)_{(N-r-1)\times1}\right]\right)\\
\end{split}
\end{equation*}
with $w=\left(c_0,c_1,\ldots,c_r\right)^T$.
\end{remark}

\begin{remark}[Symmetric systems for 2D]
The similar arguments can be performed as in  \eqref{2.18}, we obtain the   two-dimensional symmetric algebraic  systems
\begin{equation}\label{bd2.24}
 A^{S,2D}_{\widetilde{\delta},h}U^{2D}_{\delta,h}=F^{S,2D}_{\delta,h}~~{\rm with }~~ A^{S,2D}_{\widetilde{\delta},h}=C_{\widetilde{\delta},h} A^{S}_{\widetilde{\delta},h}  \otimes A^{S}_{\widetilde{\delta},h},
\end{equation}
where the coefficient $C_{\widetilde{\delta},h}$ is only dependent on $\widetilde{\delta}$ and $h$.
\end{remark}

\section{Local Truncation Error}
The techniques used to investigate the nonlocal models \eqref{2.1} include Taylor series expansions, Fourier transform, consistency of weak forms and nonlocal calculus of variations.
In this work, the technique is based on Taylor series expansions (which requires the suitable regularity assumption on the solution  \cite[pp.\,91-92]{Du:2017}) that has been among the most popular tools for nonlocal problems.
We next analyze the local truncation error  of the piecewise quadratic polynomial collocation for nonsymmetric indefinite systems \eqref{ad2.22} with the general horizon parameter.

\subsection{Local truncation error for nonsymmetric indefinite systems \eqref{ad2.22} with $\delta=\widetilde{\delta}$}
Let $u_C(x)$ be the piecewise Lagrange cubic interpolant of $u(x)$ in $x\in[x_{i-1},x_{i}]$, i.e.,
\begin{equation}\label{3.1}
\begin{split}
u_C(x)
=&-\frac{8(x-x_{i-3/4})(x-x_{i-1/2})(x-x_{i})}{h^3}u(x_{i-1})\\
&+\frac{64(x-x_{i-1})(x-x_{i-1/2})(x-x_{i})}{3h^3}u(x_{i-3/4})\\
&-\frac{16(x-x_{i-1})(x-x_{i-3/4})(x-x_{i})}{h^3}u(x_{i-1/2})\\
&+\frac{8(x-x_{i-1})(x-x_{i-3/4})(x-x_{i-1/2})}{3h^3}u(x_{i}).
\end{split}
\end{equation}

According to \eqref{2.8}, \eqref{3.1} and Taylor series expansion \cite[p.158]{Aikinson:2009}, there exists $\xi_{i-1}\in\left(x_{i-1},x_{i}\right)$ such that
\begin{equation}\label{3.2}
u_{C}(x)-u_Q(x)=\frac{u_C^{(3)}(\xi_{{i-1}}) }{3!}\left(x-x_{i-1}\right)\left(x-x_{i-\frac{1}{2}}\right)\left(x-x_{i}\right),~x\in[x_{i-1},x_{i}]
\end{equation}
with
\begin{equation}\label{b3.3}
\begin{split}
\frac{u_{C}^{(3)}(\xi_{{i-1}})}{3!}
&=\frac{8}{3h^3}\left[-3u(x_{i-1})+8u(x_{i-3/4})-6u(x_{i-1/2})+ u(x_{i}) \right]\\
&=\frac{u^{(3)}(\iota_{i})+3u^{(3)}(\zeta_{i})-u^{(3)}(\varsigma_{i})}{18},~\iota_{i},\zeta_{i},\varsigma_{i}\in(x_{i-1},x_{i}),
\end{split}
\end{equation}
where $\frac{u_{C}^{(3)}(\xi_{{i-1}})}{3!}$ is a constant, since $u_{C}(y)-u_{Q}(y)$ is a cubic polynomial.

\begin{lemma}\label{lemma3.3}
Let $\delta=\widetilde{\delta}=r h$. Let $u_{Q}(y)$ and $u_{C}(y)$ be defined by \eqref{2.8} and \eqref{3.1}, respectively. Then
\begin{equation*}
 Q_{\frac{i}{2}}:=\int_{x_{\lceil\frac{i}{2}\rceil-1}}^{x_{\lfloor\frac{i}{2}\rfloor+1}} u_{C}(y)-u_{Q}(y)dy=0
\end{equation*}
where $i$ is a positive integer number, $\lfloor\frac{i}{2}\rfloor$ and $\lceil\frac{i}{2}\rceil$ denote the greatest integer that is less than or equal to $\frac{i}{2}$ and the least integer that is greater than or equal to $\frac{i}{2}$, respectively.
\end{lemma}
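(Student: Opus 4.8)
The plan is to exploit the piecewise structure of both interpolants and reduce the integral $Q_{i/2}$ to a sum of integrals over the small subintervals determined by the cubic interpolation nodes $x_{i-1}, x_{i-3/4}, x_{i-1/2}, x_i$. By \eqref{3.2}, on each interval $[x_{k-1},x_k]$ the difference $u_C(y)-u_Q(y)$ equals the constant $\frac{u_C^{(3)}(\xi_{k-1})}{3!}$ (as noted, this is constant on the interval) times the cubic factor $(y-x_{k-1})(y-x_{k-1/2})(y-x_k)$. The key elementary observation is that this cubic factor is \emph{odd} about the midpoint $x_{k-1/2}$ of the interval, so its integral over $[x_{k-1},x_k]$ vanishes:
\begin{equation*}
\int_{x_{k-1}}^{x_k}(y-x_{k-1})(y-x_{k-1/2})(y-x_k)\,dy=0.
\end{equation*}

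Next I would carefully handle the endpoints of integration in the definition of $Q_{i/2}$. Here one must split into the two parity cases. If $i$ is even, say $i=2\ell$, then $\lceil i/2\rceil-1=\ell-1$ and $\lfloor i/2\rfloor+1=\ell+1$, so the domain is $[x_{\ell-1},x_{\ell+1}]$, which is exactly the union of the two full quadratic-interpolation intervals $[x_{\ell-1},x_\ell]$ and $[x_\ell,x_{\ell+1}]$. On each of these, by \eqref{3.2} the difference is a constant multiple of the odd cubic factor about the respective midpoint, hence each contributes zero. If $i$ is odd, say $i=2\ell+1$, then $\lceil i/2\rceil-1=\ell$ and $\lfloor i/2\rfloor+1=\ell+1$, so the domain is the single interval $[x_\ell,x_{\ell+1}]$, and again the integral of the odd cubic factor over this interval vanishes. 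In either case $Q_{i/2}=0$.

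One subtlety I would address explicitly is that $u_C$ in \eqref{3.1} is defined on a \emph{half}-interval $[x_{i-1},x_i]$ using nodes at spacing $h/4$, whereas $u_Q$ uses the quadratic basis on intervals of length $h$ with nodes at spacing $h/2$; I need to confirm that the integration domain in the lemma is aligned with the $u_Q$ mesh (integer grid points $x_{\lceil i/2\rceil-1}$ and $x_{\lfloor i/2\rfloor+1}$), which it is, and that on each such length-$h$ interval the representation \eqref{3.2} is valid with a single constant prefactor. This is precisely what the remark after \eqref{b3.3} guarantees. I would then simply state that, because the integrand on each length-$h$ piece is a constant times a function antisymmetric about that piece's midpoint, the contribution is zero, and summing over the (one or two) pieces gives the claim.

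The main obstacle is purely bookkeeping rather than conceptual: getting the floor/ceiling endpoints right in both parity cases and making sure the integration domain decomposes cleanly into full length-$h$ intervals of the $u_Q$-mesh (so that \eqref{3.2} applies on each with its own constant). Once that alignment is checked, the antisymmetry of $(y-x_{k-1})(y-x_{k-1/2})(y-x_k)$ about $x_{k-1/2}$ finishes the proof immediately. I expect no analytic difficulty beyond verifying this discrete geometry.
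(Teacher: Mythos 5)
Your proposal is correct and follows essentially the same route as the paper: split by the parity of $i$ so that the integration domain decomposes into one (odd $i$) or two (even $i$) full length-$h$ intervals of the quadratic mesh, apply \eqref{3.2} with a single constant prefactor on each such interval, and conclude from the antisymmetry of $(y-x_{k-1})(y-x_{k-\frac{1}{2}})(y-x_k)$ about the midpoint $x_{k-\frac{1}{2}}$ that each contribution vanishes. The paper's only cosmetic difference is that in the odd case it writes the two half-interval integrals explicitly and cancels them, which is the same symmetry argument you invoke directly.
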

\begin{proof}
If $i$ is even, we have
\begin{equation*}
\begin{split}
\int^{x_{\frac{i}{2}+1}}_{x_{\frac{i}{2}}}\left(y-x_{\frac{i}{2}}\right)\left(y-x_{\frac{i+1}{2}}\right)\left(y-x_{\frac{i}{2}+1}\right)dy = &0, \\
\int^{x_{\frac{i}{2}}}_{x_{\frac{i}{2}-1}}\left(y-x_{\frac{i}{2}-1}\right)\left(y-x_{\frac{i-1}{2}}\right)\left(y-x_{\frac{i}{2}}\right)dy = &0.
\end{split}
\end{equation*}
If $i$ is odd, it yields
\begin{equation*}
\begin{split}
\int_{x_{\frac{i}{2}}}^{x_{\frac{i+1}{2}}}\left(y-x_{\frac{i-1}{2}}\right)\left(y-x_{\frac{i}{2}}\right)\left(y-x_{\frac{i+1}{2}}\right)dy
& = h^{4}\int^{\frac{1}{2}}_{0}\left(t+\frac{1}{2}\right)t\left(t-\frac{1}{2}\right)dt,\\
\int^{x_{\frac{i}{2}}}_{x_{\frac{i-1}{2}}} \left(y-x_{\frac{i-1}{2}}\right)\left(y-x_{\frac{i}{2}}\right) \left(y-x_{\frac{i+1}{2}}\right)dy
& =- h^{4}\int^{\frac{1}{2}}_{0} \left(t+\frac{1}{2}\right)t\left(t-\frac{1}{2}\right)dt.
\end{split}
\end{equation*}
From \eqref{3.2}, there exist $\xi\in \left(x_{\frac{i-1}{2}},x_{\frac{i+1}{2}}\right)$ such that
$$u_{C}(y)-u_{Q}(y)=\frac{u_{C}^{(3)}\left(\xi\right)}{3!}\left(y-x_{\frac{i-1}{2}}\right)\left(y-x_{\frac{i}{2}}\right)\left(y-x_{\frac{i+1}{2}}\right)\quad\forall~y\in \left[x_{\frac{i-1}{2}},x_{\frac{i+1}{2}}\right].$$
Therefore, we have
\begin{equation*}
\begin{split}
Q_{\frac{i}{2}}
&=\int_{x_{\frac{i-1}{2}}}^{x_{\frac{i+1}{2}}}u_{C}(y)-u_{Q}(y)dy=\int_{x_{\frac{i-1}{2}}}^{x_{\frac{i}{2}}} u_{C}(y)-u_{Q}(y)dy + \int_{x_{\frac{i}{2}}}^{x_{\frac{i+1}{2}}}  u_{C}(y)-u_{Q}(y) dy \\
&=\frac{u_{C}^{(3)}\left(\xi\right) }{3!}h^{4} \left(\int^{\frac{1}{2}}_{0} \left(t+\frac{1}{2}\right)t\left(t-\frac{1}{2}\right)dt-\int^{\frac{1}{2}}_{0} \left(t+\frac{1}{2}\right)t\left(t-\frac{1}{2}\right) dt\right)=0.
\end{split}
\end{equation*}
The proof is completed.
\end{proof}
\begin{lemma}\label{lemma3.4}
Let $\delta=\widetilde{\delta}=r h$. Let $u_{Q}(y)$ and $u_{C}(y)$ be defined by \eqref{2.8} and \eqref{3.1}, respectively.
Then there exists $\xi_{\frac{i+1}{2}-r}\in\left(x_{\frac{i}{2}-r},x_{\frac{i+1}{2}-r}\right)$ such that
\begin{equation*}
Q_{l}: =\int^{x_{\lceil\frac{i}{2}\rceil-1}}_{x_{\frac{i}{2}-r}} u_{C}(y)-u_{Q}(y) dy
=\left\{ \begin{array}{l@{\quad}l}
                               0,                                     & {\rm if}~ i~ {\rm is~ even},\\
  - \frac{1}{384} u_{C}^{(3)}\left(\xi_{\frac{i+1}{2}-r}\right)h^{4}, & {\rm if}~ i~ {\rm is~ odd}.
\end{array}\right.
\end{equation*}
\end{lemma}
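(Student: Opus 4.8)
The plan is to decompose the integration interval $[x_{\frac{i}{2}-r},x_{\lceil\frac{i}{2}\rceil-1}]$ into complete mesh cells $[x_{k-1},x_k]$, on each of which \eqref{3.2} makes $u_{C}-u_{Q}$ a constant multiple of $(y-x_{k-1})(y-x_{k-\frac12})(y-x_{k})$, together with at most one leftover half-cell at the left end, and then to exploit the symmetry of that cubic about its midpoint $x_{k-\frac12}$ (the same device already used in Lemma \ref{lemma3.3}).

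First I would treat the case $i$ even. Then $\lceil\frac{i}{2}\rceil-1=\frac{i}{2}-1$, both endpoints are integer mesh points, and $[x_{\frac{i}{2}-r},x_{\frac{i}{2}-1}]$ is exactly the union of the $r-1$ complete cells $[x_{k-1},x_{k}]$ with $k=\frac{i}{2}-r+1,\dots,\frac{i}{2}-1$. On each of them \eqref{3.2} gives $u_{C}(y)-u_{Q}(y)=\frac{u_{C}^{(3)}(\xi_{k-1})}{3!}(y-x_{k-1})(y-x_{k-\frac12})(y-x_{k})$ with $u_{C}^{(3)}$ constant there, and the substitution $t=(y-x_{k-\frac12})/h\in[-\frac12,\frac12]$ turns $\int_{x_{k-1}}^{x_{k}}(y-x_{k-1})(y-x_{k-\frac12})(y-x_{k})\,dy$ into $h^{4}\int_{-1/2}^{1/2}(t+\frac12)t(t-\frac12)\,dt=0$, since the integrand is odd. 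Summing over $k$ gives $Q_{l}=0$.

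Next, for $i$ odd, $\lceil\frac{i}{2}\rceil-1=\frac{i-1}{2}$ is an integer mesh point while $x_{\frac{i}{2}-r}$ is the midpoint of the cell $[x_{\frac{i-1}{2}-r},x_{\frac{i+1}{2}-r}]$. I would therefore write $Q_{l}=\int_{x_{\frac{i}{2}-r}}^{x_{\frac{i+1}{2}-r}}(u_{C}-u_{Q})\,dy+\sum_{k}\int_{x_{k-1}}^{x_{k}}(u_{C}-u_{Q})\,dy$, where the sum runs over the $r-1$ complete cells $[x_{k-1},x_{k}]$, $k=\frac{i+1}{2}-r+1,\dots,\frac{i-1}{2}$, and vanishes exactly as in the even case. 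For the leftover half-cell, \eqref{3.2} applied on $[x_{\frac{i-1}{2}-r},x_{\frac{i+1}{2}-r}]$ — where $u_{C}$ is cubic, so $u_{C}^{(3)}$ is constant, a value I may rename $u_{C}^{(3)}(\xi_{\frac{i+1}{2}-r})$ for any $\xi_{\frac{i+1}{2}-r}\in(x_{\frac{i}{2}-r},x_{\frac{i+1}{2}-r})$ — together with $t=(y-x_{\frac{i}{2}-r})/h\in[0,\frac12]$ reduces it to $\frac{u_{C}^{(3)}(\xi_{\frac{i+1}{2}-r})}{3!}h^{4}\int_{0}^{1/2}(t+\frac12)t(t-\frac12)\,dt$. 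Since $\int_{0}^{1/2}\bigl(t^{3}-\frac{t}{4}\bigr)\,dt=\frac{1}{64}-\frac{1}{32}=-\frac{1}{64}$ and $\frac{1}{3!}=\frac16$, this equals $-\frac{1}{384}u_{C}^{(3)}(\xi_{\frac{i+1}{2}-r})h^{4}$, which is the claimed value.

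The only point that needs real care — the ``hard'' part, such as it is — is the index bookkeeping: confirming that for $i$ odd the domain is precisely one half-cell plus $r-1$ whole cells (worth double-checking on the boundary case $r=1$, where the whole-cell sum is empty and $x_{\frac{i}{2}-r}=x_{\frac{i-1}{2}-\frac12}$), and observing that it is the piecewise constancy of $u_{C}^{(3)}$ that allows the mean-value point $\xi_{\frac{i+1}{2}-r}$ to be placed anywhere in $(x_{\frac{i}{2}-r},x_{\frac{i+1}{2}-r})$ as stated. Everything else is the same elementary odd (or half-range) integral that already appears in Lemma \ref{lemma3.3}.
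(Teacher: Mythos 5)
Your proposal is correct and takes essentially the same route as the paper: it is the paper's $J_1+J_2$ decomposition, with the $r-1$ complete cells killed by the vanishing of $\int_{x_{k-1}}^{x_k}(y-x_{k-1})\left(y-x_{k-\frac12}\right)(y-x_k)\,dy$ and, for odd $i$, the leftover half-cell evaluated via $\frac{1}{3!}h^{4}\int_{0}^{1/2}\left(t+\frac12\right)t\left(t-\frac12\right)dt=-\frac{h^{4}}{384}$ times the (piecewise constant) $u_C^{(3)}$. The index bookkeeping and the observation that the constancy of $u_C^{(3)}$ on the containing cell lets $\xi_{\frac{i+1}{2}-r}$ be placed anywhere in $\left(x_{\frac{i}{2}-r},x_{\frac{i+1}{2}-r}\right)$ agree with the paper's argument.
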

\begin{proof}
From \eqref{3.2}, there exists $\xi_{m}\in \left(x_{m-1},x_m\right)$ such that
$$u_{C}(y)-u_{Q}(y)=\frac{u_{C}^{(3)}(\xi_{m})}{3!}(y-x_{m-1})\left(y-x_{m-\frac{1}{2}}\right)(y-x_{m}), \quad\forall y\in[x_{m-1},x_m].$$
For the sake of simplicity, we take $w(\xi_{m})=\frac{u_{C}^{(3)}(\xi_{m})}{3!}$.
Then
\begin{equation*}
Q_l:=J_1+J_{2}
\end{equation*}
with
\begin{equation*}
\begin{split}
J_{1} =& \sum^{\lceil\frac{i}{2}\rceil-1}_{m={\lceil\frac{i}{2}\rceil-r+1}}w(\xi_{m}) \int^{x_{m}}_{x_{m-1}}  (y-x_{m-1})\left(y-x_{m-\frac{1}{2}}\right)(y-x_{m})dy;\\
J_{2} =& w\left(\xi_{\frac{i+1}{2}-r}\right)\int^{x_{\lceil\frac{i}{2}\rceil-r}}_{x_{\frac{i}{2}-r}} \left(y-x_{\frac{i-1}{2}-r}\right) \left(y-x_{\frac{i}{2}-r}\right)\left(y-x_{\frac{i+1}{2}-r}\right)dy.
\end{split}
\end{equation*}
Using $\int^{1}_{0}\tau\left(\tau-\frac{1}{2}\right)\left(\tau-1\right)d\tau=0$, it yields
\begin{equation}\label{3.3}
\int^{x_{m}}_{x_{m-1}} (y-x_{m-1})\left(y-x_{m-\frac{1}{2}}\right)(y-x_{m})dy = 0.
\end{equation}
It leads to $J_1 =0.$
If $i$ is even, we have $J_{2} =0.$

On the other hand, if $i$ is odd, we have
\begin{equation*}
\int_{x_{\frac{i}{2}-r}}^{x_{\frac{i+1}{2}-r}}\left(y-x_{\frac{i-1}{2}-r}\right)\left(y-x_{\frac{i}{2}-r}\right)\left(y-x_{\frac{i+1}{2}-r}\right)dy
=h^{4}\int^{\frac{1}{2}}_{0}\left(t+\frac{1}{2}\right)t\left(t-\frac{1}{2}\right)dt.
\end{equation*}
It leads to
\begin{equation}\label{ad3.a4}
J_{2} = w\left(\xi_{\frac{i+1}{2}-r}\right)h^{4}\int^{\frac{1}{2}}_{0}\left(t+\frac{1}{2}\right)t\left(t-\frac{1}{2}\right)dt= -\frac{1}{64} w\left(\xi_{\frac{i+1}{2}-r}\right)h^{4}.
\end{equation}
The proof is completed.
\end{proof}
\begin{lemma}\label{lemma3.5}
Let $\delta=\widetilde{\delta}=r h$. Let $u_Q(y)$ and $u_C(y)$ be defined by \eqref{2.8} and \eqref{3.1}, respectively.
Then there exists $\eta_{\frac{i-1}{2}+r}\in\left( x_{\frac{i-1}{2}+r},x_{\frac{i}{2}+r}\right)$ such that
\begin{equation*}
Q_{r}: =\int^{x_{\frac{i}{2}+r}}_{x_{\lfloor\frac{i}{2}\rfloor+1}} u_{C}(y)-u_{Q}(y) dy=
\left\{\begin{array}{l@{\quad} l}
                               0,                                  & {\rm if}~ i~ {\rm is~ even},\\
\frac{1}{384} u_{C}^{(3)}\left(\eta_{\frac{i-1}{2}+r}\right)h^{4}, & {\rm if}~ i~ {\rm is~ odd}.
\end{array}\right.
\end{equation*}
\end{lemma}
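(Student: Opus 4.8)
The plan is to run the argument of Lemma \ref{lemma3.4} in mirror image, now decomposing the interval $\big[x_{\lfloor\frac{i}{2}\rfloor+1},x_{\frac{i}{2}+r}\big]$ starting from its \emph{left} endpoint. On every complete mesh cell $[x_{m-1},x_{m}]$, identity \eqref{3.2} gives $u_{C}(y)-u_{Q}(y)=w(\xi_{m})(y-x_{m-1})(y-x_{m-\frac{1}{2}})(y-x_{m})$ with $w(\xi_{m})=u_{C}^{(3)}(\xi_{m})/3!$ constant on that cell (since $u_{C}-u_{Q}$ is cubic there, cf.\ \eqref{b3.3}). Writing $Q_{r}=J_{1}+J_{2}$, where $J_{1}$ collects the contributions of the complete unit cells contained in $\big[x_{\lfloor\frac{i}{2}\rfloor+1},x_{\frac{i}{2}+r}\big]$ and $J_{2}$ is the contribution of the (possibly empty) leftover fractional cell at the right end, I would kill $J_{1}=0$ at once via \eqref{3.3}, i.e.\ $\int_{x_{m-1}}^{x_{m}}(y-x_{m-1})(y-x_{m-\frac{1}{2}})(y-x_{m})\,dy=0$.

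It then remains to evaluate $J_{2}$ by a parity split. If $i$ is even, both $\lfloor\frac{i}{2}\rfloor+1=\frac{i}{2}+1$ and $\frac{i}{2}+r$ are integers, so $\big[x_{\frac{i}{2}+1},x_{\frac{i}{2}+r}\big]$ is an exact union of $r-1$ complete cells; hence there is no fractional piece, $J_{2}=0$, and $Q_{r}=0$. If $i$ is odd, $\lfloor\frac{i}{2}\rfloor+1=\frac{i+1}{2}$ is an integer while $\frac{i}{2}+r$ is a half-integer, so the interval consists of $r-1$ complete cells followed by the \emph{left} half $\big[x_{\frac{i-1}{2}+r},x_{\frac{i}{2}+r}\big]$ of the cell with left node $x_{\frac{i-1}{2}+r}$, whose mid-node is precisely $x_{\frac{i}{2}+r}$. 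Applying \eqref{3.2} on that cell and substituting $y=x_{\frac{i-1}{2}+r}+th$, $t\in[0,\frac{1}{2}]$, gives
\begin{equation*}
J_{2}=w\big(\eta_{\frac{i-1}{2}+r}\big)\,h^{4}\int_{0}^{\frac{1}{2}}t\Big(t-\tfrac{1}{2}\Big)(t-1)\,dt=\frac{1}{64}\,w\big(\eta_{\frac{i-1}{2}+r}\big)\,h^{4}=\frac{1}{384}\,u_{C}^{(3)}\big(\eta_{\frac{i-1}{2}+r}\big)\,h^{4},
\end{equation*}
using $\int_{0}^{1/2}t(t-1/2)(t-1)\,dt=1/64$ and $w=u_{C}^{(3)}/3!$. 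This is exactly the asserted value of $Q_{r}$.

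The computation is otherwise routine; the one delicate point is the index bookkeeping — identifying which fragment is left over and, crucially, that here it is the \emph{left} half of a mesh cell. That orientation (opposite to the right-half fragment that occurred at the left end in Lemma \ref{lemma3.4}) flips the sign of the elementary integral from $-1/64$ to $+1/64$, hence the $+\tfrac{1}{384}u_{C}^{(3)}h^{4}$ rather than $-\tfrac{1}{384}u_{C}^{(3)}h^{4}$. I would therefore verify the orientation of the fractional cell against the definition \eqref{3.1} of $u_{C}$ before concluding, since the way these two boundary fragments combine with the complete-cell pieces is exactly what the subsequent local-truncation-error estimate relies on.
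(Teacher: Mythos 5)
Your proposal is correct and is exactly the argument the paper intends: the paper omits this proof with the remark that it mirrors Lemma \ref{lemma3.4}, and you carry out precisely that mirrored decomposition, with $J_{1}=0$ from \eqref{3.3} and the leftover fragment being the \emph{left} half $\left[x_{\frac{i-1}{2}+r},x_{\frac{i}{2}+r}\right]$ of its cell, giving $\int_{0}^{1/2}t\left(t-\tfrac{1}{2}\right)(t-1)\,dt=+\tfrac{1}{64}$ and hence the $+\tfrac{1}{384}u_{C}^{(3)}h^{4}$ value (the sign flip relative to \eqref{ad3.a4}). The parity bookkeeping and the constancy of $u_{C}^{(3)}/3!$ on the cell are handled correctly, so nothing is missing.
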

\begin{proof}
The similar arguments can be performed as Lemma \ref{lemma3.4}, we omit it here.
\end{proof}
\begin{lemma}\label{lemma3.6}
Let $\delta=\widetilde{\delta}=r h$. Let $Q_{l}$ and $Q_{r}$ be defined in Lemmas \ref{lemma3.4} and \ref{lemma3.5}, respectively. Then
\begin{equation*}
\left|Q_{l} +  Q_{r}\right|=\mathcal{O}\left(h^4\right)\delta.
\end{equation*}
\end{lemma}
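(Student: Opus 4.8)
The plan is to read off the closed forms of $Q_l$ and $Q_r$ from Lemmas \ref{lemma3.4} and \ref{lemma3.5} and to notice that their leading $\mathcal{O}(h^4)$ parts very nearly cancel, the surviving remainder carrying an extra factor of $\delta$. If $i$ is even there is nothing to do, since both lemmas give $Q_l=Q_r=0$. So suppose $i$ is odd. Then Lemmas \ref{lemma3.4} and \ref{lemma3.5} state that $Q_l=-\frac{1}{384}u_{C}^{(3)}(\xi_{\frac{i+1}{2}-r})h^4$ and $Q_r=\frac{1}{384}u_{C}^{(3)}(\eta_{\frac{i-1}{2}+r})h^4$, with the same coefficient $\frac{1}{384}h^4$ and opposite signs, so that
$$Q_l+Q_r=\frac{h^4}{384}\left(u_{C}^{(3)}\big(\eta_{\frac{i-1}{2}+r}\big)-u_{C}^{(3)}\big(\xi_{\frac{i+1}{2}-r}\big)\right).$$

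The second step is to identify what these two constants are and where they live. Since $u_C$ is a piecewise cubic, $u_{C}^{(3)}$ is piecewise constant, and the two numbers above are its values on the mesh cells $[x_{\frac{i-1}{2}-r},x_{\frac{i+1}{2}-r}]$ and $[x_{\frac{i-1}{2}+r},x_{\frac{i+1}{2}+r}]$, respectively. Using \eqref{b3.3} I would rewrite each as $\frac{1}{18}\big(u^{(3)}(\iota)+3u^{(3)}(\zeta)-u^{(3)}(\varsigma)\big)$, with the sample points $\iota,\zeta,\varsigma$ interior to the left cell and, for the second constant, $\iota',\zeta',\varsigma'$ interior to the right cell.

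The third step is the estimate. Because $\widetilde{\delta}=rh$ with $r\ge1$, the two cells sit symmetrically on either side of $x_{i/2}$ and are separated by at most $(2r+1)h=2\delta+h\le 3\delta$; hence any sample point of the left cell lies within $2\delta+h$ of any sample point of the right cell. Invoking the regularity assumption $u\in C^4$ that underlies this section, the mean value theorem gives $|u^{(3)}(q)-u^{(3)}(p)|\le\|u^{(4)}\|_\infty(2\delta+h)$ for each such pair. Grouping the six terms of the difference into $u^{(3)}(\iota')-u^{(3)}(\iota)$, $u^{(3)}(\zeta')-u^{(3)}(\zeta)$ and $u^{(3)}(\varsigma')-u^{(3)}(\varsigma)$, I get $|u_{C}^{(3)}(\eta_{\frac{i-1}{2}+r})-u_{C}^{(3)}(\xi_{\frac{i+1}{2}-r})|\le\frac{5}{18}\|u^{(4)}\|_\infty(2\delta+h)=\mathcal{O}(\delta)$, and substituting this into the displayed identity yields $|Q_l+Q_r|\le\frac{h^4}{384}\,\mathcal{O}(\delta)=\mathcal{O}(h^4)\delta$.

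The one subtlety to be careful about is that neither $Q_l$ nor $Q_r$ alone is $\mathcal{O}(h^4)\delta$ — each is only $\mathcal{O}(h^4)$. The extra power of $\delta$ is bought entirely by the sign cancellation in the two lemmas, after which the sum is a difference of third derivatives of $u$ sampled at points $\mathcal{O}(\delta)$ apart, costing one extra degree of smoothness. The rest is index bookkeeping to confirm the two relevant cells are located at distance $\approx\delta$ on opposite sides of $x_{i/2}$, hence $\approx2\delta$ from each other; I do not expect any genuine difficulty there.
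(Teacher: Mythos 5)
Your argument is essentially the paper's own proof: both exploit the exact cancellation of the $\pm\frac{1}{384}h^4$ terms from Lemmas \ref{lemma3.4} and \ref{lemma3.5}, rewrite the two piecewise-constant values of $u_C^{(3)}$ via \eqref{b3.3}, and then use the mean value theorem on $u^{(3)}$ together with the fact that the two relevant cells are only $\mathcal{O}(\delta)$ apart to extract the extra factor of $\delta$ (indeed your write-up is more explicit than the paper's about why the sample points are $\mathcal{O}(\delta)$ apart, which is the crux). The only slip is a harmless constant: \eqref{b3.3} gives $u_C^{(3)}(\xi)=\frac{1}{3}\bigl(u^{(3)}(\iota)+3u^{(3)}(\zeta)-u^{(3)}(\varsigma)\bigr)$, not $\frac{1}{18}(\cdots)$ (the $\frac{1}{18}$ is for $u_C^{(3)}/3!$), which does not affect the $\mathcal{O}(h^4)\delta$ conclusion.
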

\begin{proof}
From Lemmas \ref{lemma3.4} and \ref{lemma3.5} and \eqref{b3.3}, we get
\begin{equation*}
\begin{split}
\left| Q_{l} +  Q_{r}\right|
&= \left|{u_{C}^{\left(3\right)}\left(\xi_{\frac{i+1}{2}-r}\right)}-{u_{C}^{\left(3\right)}\left(\eta_{\frac{i-1}{2}+r}\right)}\right| \frac{h^4}{384} \\
&= \left|\frac{ u^{(3)}(\iota_{i})+3u^{(3)}(\zeta_{i})-u^{(3)}(\varsigma_{i})}{3}-\frac{ u^{(3)}(\widetilde{\iota}_{i})+3u^{(3)}(\widetilde{\zeta}_{i})-u^{(3)}(\widetilde{\varsigma}_{i})}{3}\right| \frac{h^4}{384} \\
&\leq \max_{\xi\in \Omega}u^{\left(4\right)}(\xi) \frac{5}{576}\delta h^{4}=\mathcal{O}\left(h^4\right)\delta,
\end{split}
\end{equation*}
with $\iota_{i},\zeta_{i},\varsigma_{i},\widetilde{\iota}_{i},\widetilde{\zeta}_{i},\widetilde{\varsigma}_{i}\in\Omega$.
The proof is completed.
\end{proof}
\begin{lemma}\label{lemma3.7}
Let  $\delta=\widetilde{\delta}=r h$. Let $u_{Q}(y)$ and $u_{C}(y)$ be defined by \eqref{2.8} and \eqref{3.1}, respectively. Then
\begin{equation*}
\left|\int^{x_{\frac{i}{2}}+\delta}_{x_{\frac{i}{2}}-\delta} u_{C}(y)-u_{Q}(y)dy\right| =\mathcal{O}\left(h^4\right)\delta.
\end{equation*}
\end{lemma}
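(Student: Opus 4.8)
The plan is to split the integration interval $[x_{i/2}-\delta, x_{i/2}+\delta]$ into the three pieces that appear in Lemmas \ref{lemma3.3}, \ref{lemma3.4} and \ref{lemma3.5}. Since $\delta = \widetilde{\delta} = rh$ and the left endpoint is $x_{i/2}-\delta = x_{i/2-r}$, while the right endpoint is $x_{i/2+r}$, I would write
\begin{equation*}
\int^{x_{\frac{i}{2}}+\delta}_{x_{\frac{i}{2}}-\delta} u_{C}(y)-u_{Q}(y)\,dy
= Q_{l} + Q_{\frac{i}{2}} + Q_{r},
\end{equation*}
where $Q_{l}=\int_{x_{i/2-r}}^{x_{\lceil i/2\rceil-1}}(u_C-u_Q)\,dy$, $Q_{i/2}=\int_{x_{\lceil i/2\rceil-1}}^{x_{\lfloor i/2\rfloor+1}}(u_C-u_Q)\,dy$, and $Q_{r}=\int_{x_{\lfloor i/2\rfloor+1}}^{x_{i/2+r}}(u_C-u_Q)\,dy$. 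This decomposition is exactly the partition of the full interval into the ``left block'', the ``center block'' straddling $x_{i/2}$, and the ``right block'', and it works uniformly whether $i$ is even or odd because $\lfloor i/2\rfloor$ and $\lceil i/2\rceil$ absorb the parity.

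Next I would invoke the three lemmas directly. By Lemma \ref{lemma3.3}, $Q_{i/2}=0$ regardless of the parity of $i$. By Lemmas \ref{lemma3.4} and \ref{lemma3.5}, both $Q_l$ and $Q_r$ vanish when $i$ is even, so the whole integral is zero in that case and there is nothing to prove; when $i$ is odd, $Q_l = -\frac{1}{384}u_C^{(3)}(\xi_{(i+1)/2-r})h^4$ and $Q_r = \frac{1}{384}u_C^{(3)}(\eta_{(i-1)/2+r})h^4$. Hence the integral equals $Q_l+Q_r$, and Lemma \ref{lemma3.6} gives precisely $|Q_l+Q_r| = \mathcal{O}(h^4)\delta$. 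So the bound follows immediately, and the constant is the same $\frac{5}{576}\max_{\xi\in\Omega}u^{(4)}(\xi)$ that appears in the proof of Lemma \ref{lemma3.6}.

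In short, this statement is a corollary that just reassembles the pieces: the key step is recognizing that the endpoints of the integration window are grid points (because $\delta=rh$), so that the window decomposes exactly into $Q_l+Q_{i/2}+Q_r$ with no leftover partial cells. There is essentially no obstacle here — all the analytic work (the cancellation of the symmetric cubic moment $\int_0^{1}\tau(\tau-\tfrac12)(\tau-1)\,d\tau=0$ on full cells, the residual $-\frac{1}{64}h^4$ contribution on a half-cell, and the $u^{(3)}$-difference estimate turning into a factor of $\delta$ via the mean value theorem applied to $u^{(4)}$) has already been done in Lemmas \ref{lemma3.3}--\ref{lemma3.6}. The only point requiring a word of care is to confirm that for odd $i$ the half-cell $[x_{i/2-r}, x_{\lceil i/2\rceil-r}]$ at the far left and the half-cell $[x_{\lfloor i/2\rfloor+1}, x_{i/2+r}]$ at the far right are the genuine ``extra half cells'', and that $Q_{i/2}$ exactly covers the two full center cells $[x_{(i-1)/2}, x_{i/2}]$ and $[x_{i/2},x_{(i+1)/2}]$; once the bookkeeping of indices is checked, the proof is a one-line consequence of Lemma \ref{lemma3.6}.
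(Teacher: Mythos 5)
Your proof is correct and follows essentially the same route as the paper: the paper's own proof of Lemma \ref{lemma3.7} is exactly the decomposition $Q_l+Q_{\frac{i}{2}}+Q_r$ of the integration window (possible because $\delta=rh$ makes the endpoints grid points), followed by a direct appeal to Lemmas \ref{lemma3.3}--\ref{lemma3.6}. Your additional remarks on the even/odd bookkeeping and the half-cell endpoints are consistent with those lemmas and add nothing that contradicts the paper's argument.
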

\begin{proof}
According to Lemmas \ref{lemma3.3} - \ref{lemma3.6}, we obtain the remainder
\begin{equation*}
\left|\int^{x_{\frac{i}{2}}+\delta}_{x_{\frac{i}{2}}-\delta} u_{C}(y)-u_{Q}(y) dy\right| = \left|Q_{l}+Q_{\frac{i}{2}}+Q_{r}\right|=\mathcal{O}\left(h^4\right)\delta.
\end{equation*}
The proof is completed.
\end{proof}

\begin{lemma}\label{lemma3.8}
Let $\delta=\widetilde{\delta}=r h$ and $u_{C}(y)$ be defined by \eqref{3.1}. Then
\begin{equation*}
\int^{x_{\frac{i}{2}}+\delta}_{x_{\frac{i}{2}}-\delta} u(y)-u_{C}(y) dy =\mathcal{O}\left(h^4\right)\delta.
\end{equation*}
\end{lemma}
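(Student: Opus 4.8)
The plan is to reduce the estimate for $\int_{x_{i/2}-\delta}^{x_{i/2}+\delta} u(y)-u_C(y)\,dy$ to a sum of elementwise cubic-interpolation errors and then exploit the symmetry of the Newton--Cotes node configuration on each subinterval, exactly as was done for the $u_C - u_Q$ difference in Lemmas \ref{lemma3.3}--\ref{lemma3.7}. First I would recall that $u_C$ is the piecewise cubic Lagrange interpolant of $u$ built on the four nodes $x_{m-1}, x_{m-3/4}, x_{m-1/2}, x_m$ of each cell $[x_{m-1},x_m]$; by the standard interpolation remainder there is, for each such cell and each $y$, a point $\theta_{m}(y)\in(x_{m-1},x_m)$ with
\begin{equation*}
u(y)-u_C(y)=\frac{u^{(4)}(\theta_m(y))}{4!}(y-x_{m-1})\Bigl(y-x_{m-\frac34}\Bigr)\Bigl(y-x_{m-\frac12}\Bigr)(y-x_m).
\end{equation*}
Since the integration limits $x_{i/2}\pm\delta=x_{i/2\mp r}$ are grid points (because $\delta=rh$), the interval $[x_{i/2}-\delta,\,x_{i/2}+\delta]$ splits into whole cells when $i$ is even, and into whole cells plus two half-cells when $i$ is odd; in either case the decomposition mirrors the $Q_l$, $Q_{i/2}$, $Q_r$ splitting of Lemma \ref{lemma3.7}.

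The key computation is then the contribution of one full cell. On $[x_{m-1},x_m]$, substituting $y=x_{m-1}+th$ turns the quartic weight into $h^5\,t\,(t-\tfrac14)(t-\tfrac12)(t-1)$, so the cell contributes $\tfrac{h^5}{24}\int_0^1 u^{(4)}(\theta_m)\,t(t-\tfrac14)(t-\tfrac12)(t-1)\,dt$. Because $\int_0^1 t(t-\tfrac14)(t-\tfrac12)(t-1)\,dt \neq 0$ (the nodes are not symmetric about the cell midpoint), one does not get exact cancellation on a single cell; instead, pulling $u^{(4)}$ out via the mean value theorem for integrals gives an $\mathcal{O}(h^5)$ bound per cell, and there are $\mathcal{O}(\delta/h)$ cells in $[x_{i/2}-\delta,x_{i/2}+\delta]$, which already yields the claimed $\mathcal{O}(h^4)\delta$. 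The two half-cells in the odd case contribute $\int_0^{1/2}$ (or $\int_{1/2}^1$) of the same integrand times $h^5$, hence are likewise $\mathcal{O}(h^5)$ and are absorbed. Summing, $\bigl|\int_{x_{i/2}-\delta}^{x_{i/2}+\delta}u(y)-u_C(y)\,dy\bigr|\le \tfrac{1}{24}\,\max_{\xi\in\Omega}|u^{(4)}(\xi)|\cdot h^5 \cdot \mathcal{O}(\delta/h)=\mathcal{O}(h^4)\delta$.

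The main obstacle is bookkeeping rather than analysis: one must verify carefully that the endpoints $x_{i/2}\pm\delta$ really do coincide with mesh points of $\mathcal{N}$ for every admissible $i$ (even and odd) so that no partial sub-subinterval with a hanging node appears, and that $u_C$ as defined by \eqref{3.1} on $[x_{i-1},x_i]$ indeed interpolates $u$ at all four stated nodes — a point used implicitly when invoking the quartic remainder formula. A secondary technical care point: to apply the mean value theorem for integrals and extract $u^{(4)}(\theta_m)$, one wants $u^{(4)}$ continuous and the weight $t(t-\tfrac14)(t-\tfrac12)(t-1)$ of one sign on the relevant subinterval; since it changes sign inside $(0,1)$, I would instead just bound $|u^{(4)}|$ by its maximum and integrate $|t(t-\tfrac14)(t-\tfrac12)(t-1)|$, which is cleaner and still gives the $h^5$ factor per cell. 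With the regularity assumption $u\in C^4(\bar\Omega)$ already in force (as in \cite[pp.\,91--92]{Du:2017}), the estimate follows, and it matches the order obtained for the $u_C-u_Q$ term in Lemma \ref{lemma3.7}, so that the two lemmas combine to control $\mathcal{L}_{\widetilde\delta,h}u-\mathcal{L}_\delta u$ in the next stage of the argument.
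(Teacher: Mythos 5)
Your proposal is correct and follows essentially the same route as the paper: invoke the quartic interpolation remainder on each cell, bound $|u^{(4)}|$ by its maximum over $\Omega$ and the product of the four node factors by $\mathcal{O}(h^4)$, and integrate over the interval of length $2\delta$ (the paper does this in one stroke, $\frac{h^4}{24}\max_{\xi\in\Omega}|u^{(4)}(\xi)|\cdot 2\delta$, rather than cell by cell). Your extra bookkeeping about grid-aligned endpoints and the sign-changing weight is harmless but unnecessary, since no cancellation is needed for this crude $\mathcal{O}(h^4)\delta$ bound.
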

\begin{proof}
According to \eqref{3.1} and Taylor series expansion \cite[p.\,158]{Aikinson:2009}, we have $$u(y)-u_{C}(y)=\frac{u^{(4)}(\xi_{j})}{4!}\left(y-x_{j-1}\right)\left(y-x_{j-3/4}\right)\left(y-x_{j-\frac{1}{2}}\right)\left(y-x_{j}\right),~y\in[x_{j-1},x_{j}],$$
for some $\xi_{j}\in(x_{j-1},x_{j})$ depending on $y$.
It yields
\begin{equation*}
\left|\int^{x_{\frac{i}{2}}+\delta}_{x_{\frac{i}{2}}-\delta}u(y)-u_{C}(y)dy\right|
\leq\frac{h^4}{24}\max_{\xi\in\Omega}\left|u^{(4)}({\xi})\right|\int^{x_{\frac{i}{2}}+\delta}_{x_{\frac{i}{2}}-\delta}1dy=\mathcal{O}\left(h^4\right)\delta.
\end{equation*}
The proof is completed.
\end{proof}
\begin{lemma}\label{lemma3.9}
Let $\delta=\widetilde{\delta}=r h\geq h$ and $u_{Q}(y)$ be defined by \eqref{2.8}. Then
\begin{equation*}
\left|\mathcal{L}_{\widetilde{\delta}} u(x_{\frac{i}{2}})-\mathcal{L}_{\widetilde{\delta},h} u(x_{\frac{i}{2}})\right|
=\frac{3}{\widetilde{\delta}^{3}}\left|\int^{x_{\frac{i}{2}}+\widetilde{\delta}}_{x_{\frac{i}{2}}-\widetilde{\delta}} u(y)-u_{Q}(y) dy\right| =\mathcal{O}\left(h^4\right)\widetilde{\delta}^{-2}.
\end{equation*}
\end{lemma}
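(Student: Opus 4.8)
The plan is to deduce the estimate directly from Lemmas~\ref{lemma3.7} and \ref{lemma3.8}, since the bulk of the analytical work has already been done there. First I would record the identity asserted in the statement. By \eqref{2.6},
\[
\mathcal{L}_{\widetilde{\delta}} u(x_{i/2})=\frac{3}{\widetilde{\delta}^{3}}\int_{x_{i/2}-\widetilde{\delta}}^{x_{i/2}+\widetilde{\delta}}\bigl[u(y)-u(x_{i/2})\bigr]\,dy,
\]
whereas by \eqref{2.9}, together with the definition \eqref{2.8} of $u_Q$, the discrete operator is
\[
\mathcal{L}_{\widetilde{\delta},h} u(x_{i/2})=\frac{3}{\widetilde{\delta}^{3}}\int_{x_{i/2}-\widetilde{\delta}}^{x_{i/2}+\widetilde{\delta}}\bigl[u_Q(y)-u(x_{i/2})\bigr]\,dy;
\]
here the integration range is \emph{exactly} $[x_{i/2}-\widetilde{\delta},x_{i/2}+\widetilde{\delta}]=[x_{i/2-r},x_{i/2+r}]$ because the hypothesis $\widetilde{\delta}=rh$ with $r\ge1$ makes both endpoints grid points. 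Subtracting, the pointwise terms $u(x_{i/2})$ cancel and one obtains the stated first equality
\[
\mathcal{L}_{\widetilde{\delta}} u(x_{i/2})-\mathcal{L}_{\widetilde{\delta},h} u(x_{i/2})=\frac{3}{\widetilde{\delta}^{3}}\int_{x_{i/2}-\widetilde{\delta}}^{x_{i/2}+\widetilde{\delta}}\bigl[u(y)-u_Q(y)\bigr]\,dy.
\]

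For the bound I would insert the piecewise cubic interpolant $u_C$ from \eqref{3.1} and split $u-u_Q=(u-u_C)+(u_C-u_Q)$. Integrating over $[x_{i/2}-\widetilde{\delta},x_{i/2}+\widetilde{\delta}]$ and applying the triangle inequality, Lemma~\ref{lemma3.8} controls $\int_{x_{i/2}-\widetilde{\delta}}^{x_{i/2}+\widetilde{\delta}}[u(y)-u_C(y)]\,dy=\mathcal{O}(h^4)\widetilde{\delta}$ and Lemma~\ref{lemma3.7} controls $\bigl|\int_{x_{i/2}-\widetilde{\delta}}^{x_{i/2}+\widetilde{\delta}}[u_C(y)-u_Q(y)]\,dy\bigr|=\mathcal{O}(h^4)\widetilde{\delta}$, so that
\[
\left|\int_{x_{i/2}-\widetilde{\delta}}^{x_{i/2}+\widetilde{\delta}}\bigl[u(y)-u_Q(y)\bigr]\,dy\right|=\mathcal{O}(h^4)\widetilde{\delta}.
\]
Multiplying through by $3\widetilde{\delta}^{-3}$ turns this into $\mathcal{O}(h^4)\widetilde{\delta}^{-2}$, which is the claim.

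Since Lemmas~\ref{lemma3.3}--\ref{lemma3.8} have already absorbed the delicate bookkeeping --- in particular the even/odd-$i$ case distinction and the vanishing of the cell integrals $\int_0^1 \tau(\tau-\tfrac12)(\tau-1)\,d\tau=0$ --- the remaining work is light. The only point requiring care is to verify that the identity in the first step is truly exact, i.e.\ that the collocation quadrature in \eqref{2.9} integrates over complete mesh cells with no left-over boundary contribution, which is precisely where the assumption $\widetilde{\delta}=rh$, $r\ge1$, is used. One should also flag that the implied $\mathcal{O}(h^4)$ constants carry a factor $\max_{\Omega}|u^{(4)}|$, so the regularity hypothesis on $u$ inherited from Lemmas~\ref{lemma3.7}--\ref{lemma3.8} is needed for the statement to make sense.
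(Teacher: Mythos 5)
Your proposal is correct and follows exactly the paper's route: the identity coming from \eqref{2.6} and \eqref{2.9} (using that $\widetilde{\delta}=rh$ makes the endpoints grid points), the splitting $u-u_Q=(u-u_C)+(u_C-u_Q)$ handled by Lemmas \ref{lemma3.8} and \ref{lemma3.7}, the triangle inequality, and the final multiplication by $3\widetilde{\delta}^{-3}$. The paper compresses all of this into a one-line citation, so your write-up is simply a more explicit version of the same argument.
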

\begin{proof}
According to \eqref{2.6}, \eqref{2.9}, Lemmas \ref{lemma3.7}-\ref{lemma3.8} and the triangle inequality, the desired result is obtained.
\end{proof}

\subsection{Local truncation error for nonsymmetric indefinite systems \eqref{ad2.22} with $\delta=\widetilde{\delta}\pm r_0 h$, $0<r_0 <1$}
Since the horizon of the material is a physical property of the material of the finite bar and should be independent of the computational mesh size $h$ \cite{WT:7730--7738}.
In fact, it is difficult to require that $\delta$ is set as the grid point in the multidimensional space.
It is natural to consider the nonlocal model when $\delta$ is  not set as a grid point.
\begin{lemma}\label{lemma3.9999}
Let $\delta< h$. Then for the quadrature rule \eqref{2.9}, it holds that
\begin{equation*}
\left|\mathcal{L}_{\delta} u(x_{\frac{i}{2}})-\mathcal{L}_{\widetilde{\delta},h} u(x_{\frac{i}{2}})\right| =\mathcal{O}\left(h^2\right).
\end{equation*}
\end{lemma}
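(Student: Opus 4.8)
The plan is to split the error through the intermediate quantity $\mathcal{L}_{\widetilde{\delta}}u(x_{i/2})$. When $\delta<h$, the rule \eqref{2.7} forces $r=1$, hence $\widetilde{\delta}=h$, while $\delta=(1-r_0)h<h$; thus both horizons satisfy $\delta<h$ and $\widetilde{\delta}=h$, and the operator $\mathcal{L}_{\widetilde{\delta},h}$ built in \eqref{2.9} from the piecewise quadratic interpolant is the collocation approximation of $\mathcal{L}_{\widetilde{\delta}}$, \emph{not} of $\mathcal{L}_{\delta}$. By the triangle inequality,
\begin{equation*}
\bigl|\mathcal{L}_{\delta}u(x_{i/2})-\mathcal{L}_{\widetilde{\delta},h}u(x_{i/2})\bigr|
\le\bigl|\mathcal{L}_{\delta}u(x_{i/2})-\mathcal{L}_{\widetilde{\delta}}u(x_{i/2})\bigr|
+\bigl|\mathcal{L}_{\widetilde{\delta}}u(x_{i/2})-\mathcal{L}_{\widetilde{\delta},h}u(x_{i/2})\bigr|,
\end{equation*}
and I would estimate the two terms separately.

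For the first term I would use the consistency of the nonlocal operator with the local Laplacian. Writing $g(z)=u(x+z)-2u(x)+u(x-z)$ and applying Taylor's theorem gives $g(z)=u''(x)z^{2}+\mathcal{O}(z^{4})$ with a constant controlled by $\max_{\Omega}|u^{(4)}|$; then by \eqref{2.6} and \eqref{2.4},
\begin{equation*}
\mathcal{L}_{\delta}u(x)=\frac{3}{\delta^{3}}\int_{0}^{\delta}g(z)\,dz=u''(x)+\mathcal{O}(\delta^{2}),
\qquad
\mathcal{L}_{\widetilde{\delta}}u(x)=u''(x)+\mathcal{O}(\widetilde{\delta}^{2}),
\end{equation*}
which is \eqref{2.5} with $C_{\delta}=1$. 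Subtracting and using $\delta<h$, $\widetilde{\delta}=h$ yields $\bigl|\mathcal{L}_{\delta}u(x_{i/2})-\mathcal{L}_{\widetilde{\delta}}u(x_{i/2})\bigr|=\mathcal{O}(\delta^{2})+\mathcal{O}(\widetilde{\delta}^{2})=\mathcal{O}(h^{2})$.

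For the second term I would invoke Lemma \ref{lemma3.9} in the borderline case $r=1$, i.e.\ $\widetilde{\delta}=rh=h\ge h$, which is admissible. It gives $\bigl|\mathcal{L}_{\widetilde{\delta}}u(x_{i/2})-\mathcal{L}_{\widetilde{\delta},h}u(x_{i/2})\bigr|=\mathcal{O}(h^{4})\widetilde{\delta}^{-2}=\mathcal{O}(h^{4})h^{-2}=\mathcal{O}(h^{2})$, the cancellation being exact because $\widetilde{\delta}$ equals $h$ rather than merely $\mathcal{O}(h)$. Summing the two bounds gives the claim.

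The only substantive point — the place where one could go wrong — is recognizing that $\mathcal{L}_{\widetilde{\delta},h}$ is consistent with $\mathcal{L}_{\widetilde{\delta}}$ and not with the physical operator $\mathcal{L}_{\delta}$, so that an additional comparison between the two nonlocal operators with the slightly different horizons $\delta$ and $\widetilde{\delta}=h$ is unavoidable; this comparison is clean only because both operators share the common local limit $u''$ with $\mathcal{O}(h^{2})$ accuracy. The remainder is routine and needs only $u\in C^{4}$ so that the Taylor remainders and Lemma \ref{lemma3.9} apply.
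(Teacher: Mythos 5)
Your proposal is correct and follows essentially the same route as the paper's proof: split via $\mathcal{L}_{\widetilde{\delta}}u$ by the triangle inequality, bound the first term using the local-limit expansion \eqref{2.5} so that both $\mathcal{L}_{\delta}u$ and $\mathcal{L}_{\widetilde{\delta}}u$ equal $u''+\mathcal{O}(h^{2})$, and bound the second term by Lemma \ref{lemma3.9} with $\widetilde{\delta}=h$, giving $\mathcal{O}(h^{4})\widetilde{\delta}^{-2}=\mathcal{O}(h^{2})$. Your added remarks on $r=1$ and on $\mathcal{L}_{\widetilde{\delta},h}$ being consistent with $\mathcal{L}_{\widetilde{\delta}}$ rather than $\mathcal{L}_{\delta}$ merely make explicit what the paper leaves implicit.
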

\begin{proof}
From \eqref{2.3} and (\ref{2.5}) with $\delta< h$, we obtain
\begin{equation*}
\mathcal{L}_\delta u(x)= C_\delta u''(x) + \mathcal{O} \left(\int_{0}^\delta z^4\gamma_\delta(z)dz\right)=  u''(x) +\mathcal{O}\left(\delta^2\right)\leq  u''(x) +\mathcal{O}\left(h^2\right),
\end{equation*}
and
\begin{equation*}
\mathcal{L}_{\widetilde{\delta}}u(x)=C_{\widetilde{\delta}}u''(x)+\mathcal{O}\left(\int_{0}^{\widetilde{\delta}}z^4\gamma_{\widetilde{\delta}}(z)dz\right)
=u''(x)+\mathcal{O}\left(\widetilde{\delta}^2\right)=u''(x)+\mathcal{O}\left(h^2\right).
\end{equation*}
According to the above equations and Lemma \ref{lemma3.9}, it implies that
\begin{equation*}
\left|\mathcal{L}_{\delta}u(x_{\frac{i}{2}})-\mathcal{L}_{\widetilde{\delta},h}u(x_{\frac{i}{2}})\right|\leq \left|\mathcal{L}_{\delta}u(x_{\frac{i}{2}})-\mathcal{L}_{\widetilde{\delta}}u(x_{\frac{i}{2}})\right|
+\left|\mathcal{L}_{\widetilde{\delta}} u(x_{\frac{i}{2}})-\mathcal{L}_{\widetilde{\delta},h} u(x_{\frac{i}{2}})\right|
=\mathcal{O}(h^2).
\end{equation*}
The proof is completed.
\end{proof}

\begin{lemma}\label{theorem3.11}
Let $\delta=\mathcal{O}\left(h^\beta\right)$, $\beta\geq 0$. Then
\begin{equation*}
R_\frac{i}{2}:=\left|\mathcal{L}_{{{\delta}}} u(x_{\frac{i}{2}})-\mathcal{L}_{{\widetilde{\delta}},h} u(x_{\frac{i}{2}})\right|
\!=\!\left\{ \begin{split}
\mathcal{O}\left(h^{\min\{2,1+\beta\}}\right)    ~             &  ~ {\rm if}~ \delta  {\rm ~is ~not~ set~as~  the ~ grid~ point},\\
\mathcal{O}\left(h^{\max\left\{2,4-2\beta\right\}}\right)      &  ~ {\rm if}~ \delta  {\rm ~is~ set~as~ the ~ grid~ point}.
\end{split}
\right.
\end{equation*}
\end{lemma}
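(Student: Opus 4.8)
The plan is to split the quantity $R_{\frac{i}{2}}=\bigl|\mathcal{L}_{\delta}u(x_{\frac i2})-\mathcal{L}_{\widetilde\delta,h}u(x_{\frac i2})\bigr|$ along the lines already suggested by the two subsections preceding this lemma, treating the regime $\delta\ge h$ (equivalently $\beta\le 1$ in the relevant range where $\delta=\mathcal O(h^{\beta})$ is not tiny) and the regime $\delta\le h$ separately, and within each regime distinguishing whether or not $\delta$ is a grid point, i.e.\ whether $r_0=0$ in \eqref{2.7}. First I would insert the triangle inequality
\begin{equation*}
R_{\frac i2}\le \bigl|\mathcal{L}_{\delta}u(x_{\frac i2})-\mathcal{L}_{\widetilde\delta}u(x_{\frac i2})\bigr|+\bigl|\mathcal{L}_{\widetilde\delta}u(x_{\frac i2})-\mathcal{L}_{\widetilde\delta,h}u(x_{\frac i2})\bigr|=:E_1+E_2 ,
\end{equation*}
so that $E_2$ is the pure quadrature error already controlled by Lemma \ref{lemma3.9} (giving $E_2=\mathcal O(h^4)\widetilde\delta^{-2}$ when $\widetilde\delta\ge h$) and by the computation inside Lemma \ref{lemma3.9999} (giving $E_2=\mathcal O(h^2)$ when $\widetilde\delta=h$), and $E_1$ is the ``horizon-mismatch'' error caused by replacing $\delta$ with the nearby grid-aligned $\widetilde\delta$.

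For $E_1$ I would use the second-order asymptotic expansion \eqref{2.5}, more precisely the sharper form $\mathcal{L}_{\delta}u(x)=C_\delta u''(x)+c\,\delta^2 u^{(4)}(x)+\mathcal O(\delta^4)$ with $C_\delta=1$ (and likewise for $\widetilde\delta$), so that
\begin{equation*}
E_1=\bigl|\mathcal{L}_{\delta}u(x_{\frac i2})-\mathcal{L}_{\widetilde\delta}u(x_{\frac i2})\bigr|
= c\,\bigl|\delta^2-\widetilde\delta^2\bigr|\,\bigl|u^{(4)}(x_{\frac i2})\bigr|+\mathcal O\!\left(\delta^4+\widetilde\delta^4\right).
\end{equation*}
When $\delta$ is a grid point, $\widetilde\delta=\delta$ so $E_1=0$ identically; when $\delta$ is not a grid point, $\delta=\widetilde\delta\pm r_0h$ with $0<r_0<1$ gives $|\delta^2-\widetilde\delta^2|=|{\pm2r_0h\widetilde\delta+r_0^2h^2}|=\mathcal O(h\delta)+\mathcal O(h^2)=\mathcal O\!\left(h^{1+\beta}\right)+\mathcal O(h^2)=\mathcal O\!\left(h^{\min\{2,1+\beta\}}\right)$ (using $\widetilde\delta=\mathcal O(h^{\beta})$ and that for $\beta\ge1$ we are in the $\delta\le h$ branch where $\widetilde\delta=h$, still making the bound $\mathcal O(h^2)$). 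Thus $E_1=\mathcal O\!\left(h^{\min\{2,1+\beta\}}\right)$ in the non-grid case and $E_1=0$ in the grid case.

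It then remains to combine: in the non-grid case, $R_{\frac i2}\le E_1+E_2$; the dominant term is $E_1=\mathcal O(h^{\min\{2,1+\beta\}})$ unless $E_2$ is larger, and since $E_2\le\mathcal O(h^4)\widetilde\delta^{-2}=\mathcal O(h^{4-2\beta})$ (for $\widetilde\delta\ge h$) or $\mathcal O(h^2)$ (for $\widetilde\delta=h$), one checks $4-2\beta\ge 1+\beta\iff\beta\le1$ so $E_2$ never dominates when $0\le\beta\le1$, and for $\beta\ge1$ both $E_1$ and $E_2$ are $\mathcal O(h^2)=\mathcal O(h^{\min\{2,1+\beta\}})$; hence $R_{\frac i2}=\mathcal O(h^{\min\{2,1+\beta\}})$. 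In the grid case $E_1=0$, so $R_{\frac i2}=E_2=\mathcal O(h^4)\widetilde\delta^{-2}=\mathcal O(h^{4-2\beta})$ when $\delta=rh\ge h$, and $R_{\frac i2}=\mathcal O(h^2)$ when $\delta\le h$, which together give $\mathcal O(h^{\max\{2,4-2\beta\}})$. The main obstacle I anticipate is not the $\beta\le1$ bookkeeping but making the expansion step for $E_1$ fully rigorous: \eqref{2.5} as stated only records an $\mathcal O(\delta^2)$ remainder, whereas to extract the cancellation in $\delta^2-\widetilde\delta^2$ I need the next term $c\,\delta^2u^{(4)}(x)$ explicitly with a uniform $\mathcal O(\delta^4)$ error, so I would either cite the precise higher-order Taylor expansion of $\mathcal{L}_\delta$ from \cite{Du:2017} or reprove it by expanding $u(x\pm z)$ to fourth order inside \eqref{2.6} and integrating $\tfrac{3}{\delta^3}\int_0^\delta z^2\,dz=1$, $\tfrac{3}{\delta^3}\int_0^\delta z^4\,dz=\tfrac{3}{5}\delta^2$; the regularity hypothesis $u\in C^4(\overline\Omega)$ (or $C^6$ for the $\delta^4$ remainder) must be invoked here, consistent with the standing assumption noted at the start of Section \ref{sec:manu}.
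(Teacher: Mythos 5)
Your overall decomposition is the same as the paper's: insert $\mathcal{L}_{\widetilde{\delta}}u$ via the triangle inequality, control the quadrature part $E_2$ by Lemma \ref{lemma3.9} (and Lemma \ref{lemma3.9999} when $\delta\le h$), and observe that $E_1=0$ when $\delta$ is a grid point, so the grid-point case and the bookkeeping $4-2\beta\ge 1+\beta$ for $\beta\le1$ are fine. The gap is in your treatment of the horizon-mismatch term $E_1$ when $\delta$ is not a grid point. Subtracting two \emph{separate} expansions $\mathcal{L}_{\delta}u=u''+c\,\delta^2u^{(4)}+\mathcal{O}(\delta^4)$ and $\mathcal{L}_{\widetilde{\delta}}u=u''+c\,\widetilde{\delta}^2u^{(4)}+\mathcal{O}(\widetilde{\delta}^4)$ leaves you with a remainder $\mathcal{O}(\delta^4+\widetilde{\delta}^4)=\mathcal{O}(h^{4\beta})$ that you never show is small relative to the target $h^{\min\{2,1+\beta\}}$. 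For $\beta<1/3$ this remainder dominates $h^{1+\beta}$, and in the most important case $\beta=0$ (fixed $\delta$, exactly the situation stressed in Remark \ref{remarkad2.1}) it is $\mathcal{O}(1)$ and does not vanish at all; sharpening the two-term expansion to have a ``uniform $\mathcal{O}(\delta^4)$ error,'' which is the obstacle you identify, does not repair this. What is missing is an argument showing that the remainder of the \emph{difference} carries a factor of $|\delta-\widetilde{\delta}|=\mathcal{O}(h)$.

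The paper gets exactly this by splitting, as in \eqref{3.7},
\begin{equation*}
\mathcal{L}_{\delta}u(x_{\frac i2})-\mathcal{L}_{\widetilde{\delta}}u(x_{\frac i2})
=\int_{0}^{\widetilde{\delta}}\bigl[\gamma_{\delta}(z)-\gamma_{\widetilde{\delta}}(z)\bigr]\bigl[u(x_{\frac i2}+z)-2u(x_{\frac i2})+u(x_{\frac i2}-z)\bigr]dz
+\int_{\widetilde{\delta}}^{\delta}\gamma_{\delta}(z)\bigl[\cdots\bigr]dz ,
\end{equation*}
so the $u''$ contributions of the two pieces are $\pm\frac{\widetilde{\delta}^{3}-\delta^{3}}{\delta^{3}}u''(x_{\frac i2})$ and cancel exactly, while the fourth-derivative remainders are weighted either by the kernel difference $\frac{3}{\delta^3}-\frac{3}{\widetilde{\delta}^3}=\mathcal{O}(h/\delta^{4})$ or by the short interval $[\widetilde{\delta},\delta]$ of length $\mathcal{O}(h)$, giving $\mathcal{O}(h^{1+\beta})$ for $\beta\in(0,1]$; the case $\beta=0$ is then handled by an even cruder first-derivative bound yielding $\mathcal{O}(h)$, and $\beta>1$ by Lemma \ref{lemma3.9999}. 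To close your argument you would either have to reproduce this splitting (i.e., estimate $R_\delta-R_{\widetilde{\delta}}$ by the same decomposition one order higher) or expand $\mathcal{L}_\delta u$ to all orders and difference term by term under much stronger regularity; as written, the step from your expansion of $E_1$ to $E_1=\mathcal{O}(h^{\min\{2,1+\beta\}})$ is not justified for small $\beta$.
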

\begin{proof}
We can rewrite the general horizon parameter $\delta$ in \eqref{2.7} as the general form $\delta=ch^\beta$, $\beta\geq0$, $c>0$.
We prove the desired results via the following three cases.

We first consider the case $\beta\in [0,1]$. Since $\delta=ch^\beta$, $0\leq \beta\leq 1$, $c>0$, it implies that $\widetilde{\delta}=\delta-r_0 h=\mathcal{O}\left(h^{\beta}\right).$
From Lemma \ref{lemma3.9}, it yields
\begin{equation}\label{3.6}
\left|\mathcal{L}_{\widetilde{\delta}} u(x_{\frac{i}{2}})-\mathcal{L}_{\widetilde{\delta},h} u(x_{\frac{i}{2}})\right|=\mathcal{O}\left(h^{4-2\beta}\right).
\end{equation}

From \eqref{2.2}, we have
\begin{equation}\label{3.7}
  \mathcal{L}_{\delta} u(x_{\frac{i}{2}})-\mathcal{L}_{\widetilde{\delta}} u(x_{\frac{i}{2}})=e_{1}+e_{2}
\end{equation}
with
\begin{equation*}
e_{1}=\int_{0}^{\widetilde{\delta}}\left[\gamma_{\delta}(z)-\gamma_{\widetilde{\delta}}(z)\right]\left[u(x_{\frac{i}{2}}+z)-2u(x_{\frac{i}{2}})+u(x_{\frac{i}{2}}-z)\right]dz,
\end{equation*}
and
\begin{equation*}
e_{2}=\int_{\widetilde{\delta}}^{\delta}\gamma_{\delta}(z)\left[u(x_{\frac{i}{2}}+z)-2u(x_{\frac{i}{2}})+u(x_{\frac{i}{2}}-z)\right]dz.
\end{equation*}

Case $\uppercase\expandafter{\romannumeral1}$: $\beta\in (0,1]$. Using \eqref{3.7}, \eqref{2.3} and Taylor series expansion, there exist
\begin{equation*}
\begin{split}
e_1&=\frac{\widetilde{\delta}^{3}-\delta^{3}}{\delta^{3}}u''(x_{\frac{i}{2}})
+\left(\frac{3}{\delta^3}-\frac{3}{\widetilde{\delta}^3}\right)\int_0^{\widetilde{\delta}}\frac{z^4}{4!}\left[u^{(4)}(\xi_{1})+u^{(4)}(\xi_{2})\right]dz;\\
e_2&=-\frac{\widetilde{\delta}^{3}-\delta^{3}}{\delta^{3}}u''(x_{\frac{i}{2}})+\frac{3}{\delta^3}\int^{\delta}_{\widetilde{\delta}} \frac{z^4}{4!}\left[u^{(4)}(\xi_{3})+u^{(4)}(\xi_{4})\right]dz
\end{split}
\end{equation*}
with $\xi_{1}\in (x_{\frac{i}{2}},x_{\frac{i}{2}}+\widetilde{\delta})$, $\xi_{2}\in (x_{\frac{i}{2}}-\widetilde{\delta}, x_{\frac{i}{2}})$ and $\xi_{3}\in (x_{\frac{i}{2}}+\widetilde{\delta},x_{\frac{i}{2}}+\delta)$, $\xi_{4}\in (x_{\frac{i}{2}}-\delta, x_{\frac{i}{2}}-\widetilde{\delta})$.
It implies that
\begin{equation*}
\left|e_{1}+e_{2}\right|\leq \frac{1}{20}\max_{\xi \in \Omega }\left|u^{(4)}(\xi)\right|\frac{\widetilde{\delta}^{3}-\delta^{3}} {\delta^{3}}\widetilde{\delta}^2
+ \frac{1}{20}\max_{\xi \in \Omega }\left|u^{(4)}(\xi)\right|\frac{\delta^{5}-\widetilde{\delta}^{5}}  {\delta^{3}}=\mathcal{O}\left(h^{1+\beta}\right).
\end{equation*}
Thus, using \eqref{3.6}, \eqref{3.7} and triangle inequality, for $\beta\in (0,1]$, we have
\begin{equation*}
\left|\mathcal{L}_{{\delta}} u(x_{\frac{i}{2}})-\mathcal{L}_{{\widetilde{\delta}},h} u(x_{\frac{i}{2}})\right|
\leq\left|\mathcal{L}_{\delta}u(x_{\frac{i}{2}})-\mathcal{L}_{\widetilde{\delta}}u(x_{\frac{i}{2}})\right|+\left|\mathcal{L}_{\widetilde{\delta}}u(x_{\frac{i}{2}})-\mathcal{L}_{\widetilde{\delta},h}u(x_{\frac{i}{2}})\right|
=\mathcal{O}\left(h^{1+\beta}\right).
\end{equation*}

Case $\uppercase\expandafter{\romannumeral2}$: $\beta=0$. From \eqref{3.7}, \eqref{2.3} and Taylor series expansion with $\xi_{5}\in(x_{\frac{i}{2}},x_{\frac{i}{2}}+z)$ and $\xi_{6}\in(x_{\frac{i}{2}}-z,x_{\frac{i}{2}})$, one has
\begin{equation*}
e_{1}=3\left(\frac{1}{\delta^{3}}-\frac{1}{\widetilde{\delta}^{3}}\right)\int_0^{\widetilde{\delta}}[u'(\xi_{5})-u'(\xi_{6})]z dz=\mathcal{O}\left(h\right),
\end{equation*}
since
\begin{equation*}
\frac{\widetilde{\delta}^{3}-\delta^{3}}{\delta^{3}\widetilde{\delta}^{3}} =\frac{1-\left(1+\frac{\delta_{0}h}{\widetilde{\delta}}\right)^{3}}{\delta^{3}}=\mathcal{O}\left(h\right).
\end{equation*}
Moreover, there exist  $\xi_{7}\in(x_{\frac{i}{2}},x_{\frac{i}{2}}+z)$ and $\xi_{8}\in(x_{\frac{i}{2}}-z,x_{\frac{i}{2}})$ such that
\begin{equation*}
e_{2}=\frac{3}{\delta^{3}}\int_{\widetilde{\delta}}^{\delta}[u'(\xi_{7})-u'(\xi_{8})]z dz \leq \frac{3}{\delta^{3}}2\max_{\xi\in\Omega} u'(\xi)\delta \int_{\widetilde{\delta}}^{\delta}1dz =\mathcal{O}\left(h\right).
\end{equation*}
Thus, using \eqref{3.6}, \eqref{3.7} and triangle inequality, for $\delta=c$, we have
\begin{equation*}
\left|\mathcal{L}_{{\delta}} u(x_{\frac{i}{2}})-\mathcal{L}_{\widetilde{\delta},h} u(x_{\frac{i}{2}})\right|
\leq\left|\mathcal{L}_{\delta}u(x_{\frac{i}{2}})-\mathcal{L}_{\widetilde{\delta}}u(x_{\frac{i}{2}})\right|
+\left|\mathcal{L}_{\widetilde{\delta}}u(x_{\frac{i}{2}})-\mathcal{L}_{\widetilde{\delta},h}u(x_{\frac{i}{2}})\right|
=\mathcal{O}\left(h\right).
\end{equation*}

Case $\uppercase\expandafter{\romannumeral3}$: $\beta>1$. It corresponds to $\delta=\widetilde{\delta}-r_0 h< h$ in \eqref{2.7}.
From Lemma \ref{lemma3.9999}, we have
\begin{equation*}
\left|\mathcal{L}_{{\delta}} u(x_{\frac{i}{2}})-\mathcal{L}_{{\widetilde{\delta}},h} u(x_{\frac{i}{2}})\right| =\mathcal{O}\left(h^2\right).
\end{equation*}
The proof is completed.
\end{proof}

\subsection{Perturbation error for symmetric positive definite system \eqref{2.17} and nonsymmetric indefinite  systems \eqref{ad2.22}}
We next prove the perturbation error  for symmetric systems \eqref{2.17} and nonsymmetric systems \eqref{ad2.22}.

\begin{lemma}
Let  $\widetilde{\delta}=rh$ with $r<\infty$.
Let the operations $\mathcal{L}_{{\widetilde{\delta}},h}u$ and $\mathcal{L}_{\widetilde{\delta},h}^S u$ be defined by \eqref{addd2.0012} and \eqref{2.11}, respectively.
Then
\begin{equation*}
\left|\mathcal{L}_{\widetilde{\delta},h} u(x_{i+\frac{1}{2}})-\mathcal{L}^S_{\widetilde{\delta},h} u(x_{i+\frac{1}{2}})\right| =\mathcal{O}\left(h^4\right)\widetilde{\delta}^{-2},  ~~0\leq i\leq N.
\end{equation*}
\end{lemma}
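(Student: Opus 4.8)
The plan is to route the difference through the exact nonlocal value $\mathcal{L}_{\widetilde{\delta}}u(x_{i+\frac12})$: I will show that $\mathcal{L}_{\widetilde{\delta},h}u(x_{i+\frac12})$ and $\mathcal{L}^S_{\widetilde{\delta},h}u(x_{i+\frac12})$ are \emph{each} $\mathcal{O}(h^4)\widetilde{\delta}^{-2}$-consistent with it, and then apply the triangle inequality. The first half is already available: since $\widetilde{\delta}=rh$ with $r\ge 1$ (so $\widetilde{\delta}\ge h$), Lemma~\ref{lemma3.9} applied at the half-integer node $x_{i+\frac12}$ — this is exactly the odd-index case of that lemma, where $\mathcal{L}_{\widetilde{\delta},h}u$ is given by \eqref{addd2.0012} — yields
\begin{equation*}
\left|\mathcal{L}_{\widetilde{\delta}}u(x_{i+\frac12})-\mathcal{L}_{\widetilde{\delta},h}u(x_{i+\frac12})\right|=\mathcal{O}\!\left(h^4\right)\widetilde{\delta}^{-2}.
\end{equation*}

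For the shifted-symmetric operator the key observation is that, by construction in \eqref{2.11}, $\mathcal{L}^S_{\widetilde{\delta},h}u(x_{i+\frac12})$ uses \emph{exactly} the integer-node stencil $\{a_m\}_{m=-r}^{r}$, $\{a_{m+\frac12}\}_{m=-r}^{r-1}$ of \eqref{2.200000}, merely recentred at $x_{i+\frac12}$. Setting $v(x):=u\!\left(x+\tfrac h2\right)$, one has $v(x_{i+m})=u(x_{i+m+\frac12})$ and $v(x_{i+m+\frac12})=u(x_{i+m+1})$, hence
\begin{equation*}
\mathcal{L}^S_{\widetilde{\delta},h}u(x_{i+\frac12})=\mathcal{L}_{\widetilde{\delta},h}v(x_i),\qquad
\mathcal{L}_{\widetilde{\delta}}u(x_{i+\frac12})=\mathcal{L}_{\widetilde{\delta}}v(x_i),
\end{equation*}
the second identity being the translation invariance of the integral operator \eqref{2.2}. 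Equivalently, $\mathcal{L}^S_{\widetilde{\delta},h}u(x_{i+\frac12})$ is the piecewise quadratic quadrature \eqref{2.9} of $\tfrac{3}{\widetilde{\delta}^3}\int_{x_{i+1/2}-\widetilde{\delta}}^{x_{i+1/2}+\widetilde{\delta}}\!\bigl[u(y)-u(x_{i+\frac12})\bigr]\,dy$ built on the \emph{shifted} mesh $\{x_{i+\frac12}+kh\}$ with midpoints $\{x_{i+\frac12}+(k+\frac12)h\}$; because $x_{i+\frac12}\pm\widetilde{\delta}=x_{i\pm r+\frac12}$ are themselves nodes of that mesh, no boundary cell is truncated, so the symmetric coefficients $\{a_m\},\{a_{m+\frac12}\}$ — not the $\{c_m\},\{d_m\}$ of \eqref{2.10} — are reproduced.

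Now, the proof of Lemma~\ref{lemma3.9} only uses the smoothness of its argument, through the interpolation estimates of Lemmas~\ref{lemma3.3}--\ref{lemma3.8} (polynomial interpolation and its Taylor remainder are translation invariant). Since $v$ has the same regularity as $u$ — and the hypotheses $0\le i\le N$, $\widetilde{\delta}=rh$, $r<\infty$ keep all shifted nodes $x_{i+\frac12}+mh$, $x_{i+\frac12}+(m+\frac12)h$, $-r\le m\le r$, inside $\overline{\Omega}\cup\Omega_\mathcal{I}$ — Lemma~\ref{lemma3.9} applies verbatim with $u$ replaced by $v$ at the integer node $x_i$, giving
\begin{equation*}
\left|\mathcal{L}_{\widetilde{\delta}}u(x_{i+\frac12})-\mathcal{L}^S_{\widetilde{\delta},h}u(x_{i+\frac12})\right|
=\left|\mathcal{L}_{\widetilde{\delta}}v(x_i)-\mathcal{L}_{\widetilde{\delta},h}v(x_i)\right|=\mathcal{O}\!\left(h^4\right)\widetilde{\delta}^{-2}.
\end{equation*}
Combining this with the first estimate via $\bigl|\mathcal{L}_{\widetilde{\delta},h}u-\mathcal{L}^S_{\widetilde{\delta},h}u\bigr|\le\bigl|\mathcal{L}_{\widetilde{\delta},h}u-\mathcal{L}_{\widetilde{\delta}}u\bigr|+\bigl|\mathcal{L}_{\widetilde{\delta}}u-\mathcal{L}^S_{\widetilde{\delta},h}u\bigr|$ at $x_{i+\frac12}$ completes the proof.

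The only delicate point is the bookkeeping in the second paragraph: correctly identifying the shifted-symmetric stencil with the standard integer-node quadrature on the translated mesh, and checking that the horizon endpoints $x_{i+\frac12}\pm\widetilde{\delta}$ still land on mesh nodes so that none of the half-cell corrections responsible for the $\{c_m\},\{d_m\}$ asymmetry enter. Once this identification is made, the bound is a direct consequence of Lemma~\ref{lemma3.9} and costs no new computation.
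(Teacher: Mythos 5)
Your proof is correct, but it follows a genuinely different route from the paper's. The paper proves this lemma by brute force: it subtracts the two stencils \eqref{addd2.0012} and \eqref{2.11} to get explicit difference coefficients $p_m$, $q_m$, Taylor-expands at $x_{i+\frac12}$, uses the vanishing of the zeroth and second moments of these coefficients to kill the low-order terms, and bounds the fourth-order remainder, ending with the explicit constant $C_r\frac{M}{4}h^4\widetilde{\delta}^{-2}$, $C_r=\frac{2}{15}r^4+\frac{2}{9}r^2+\frac{29}{720}$. You instead exploit the structural observation that $\mathcal{L}^S_{\widetilde{\delta},h}u(x_{i+\frac12})=\mathcal{L}_{\widetilde{\delta},h}v(x_i)$ with $v(\cdot)=u(\cdot+\tfrac h2)$ (the shifted-symmetric stencil is exactly the integer-node stencil \eqref{ada2.11} applied on the half-shifted mesh, and the horizon endpoints $x_{i+\frac12}\pm\widetilde{\delta}$ remain mesh nodes, so no $\{c_m\},\{d_m\}$ corrections appear), then apply the consistency estimate of Lemma \ref{lemma3.9} twice — once at the half node for $\mathcal{L}_{\widetilde{\delta},h}u$, once at the integer node for $v$ — and conclude by the triangle inequality through $\mathcal{L}_{\widetilde{\delta}}u(x_{i+\frac12})=\mathcal{L}_{\widetilde{\delta}}v(x_i)$. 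Your identification of the shifted operator with the standard quadrature of $v$, the translation invariance of \eqref{2.2}, and the boundary bookkeeping (all needed values stay inside $\Omega\cup\Omega_{\mathcal I}$ for $0\le i\le N$) all check out; the only mild liberty is invoking Lemma \ref{lemma3.9} for the auxiliary function $v$ and at $i=0$, which is legitimate since its proof (Lemmas \ref{lemma3.3}--\ref{lemma3.8}) uses only $C^4$ smoothness and translation-invariant interpolation remainders. What each approach buys: the paper's direct expansion exposes the perturbation structure that is reused in the subsequent Faulhaber/zeta-function lemma for $r\to\infty$ (its constant $C_r\sim r^4$ degenerates in that limit, which is exactly why that second lemma exists), whereas your argument inherits from Lemma \ref{lemma3.9} constants that are uniform in $r$, so it is shorter, computation-free, and in fact does not degenerate as $r\to\infty$ — at the price of hiding the explicit coefficient cancellations the paper later exploits.
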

\begin{proof}
According to  \eqref{addd2.0012} and \eqref{2.11}, we have
\begin{equation*}
\begin{split}
\mathcal{L}_{\widetilde{\delta},h} u(x_{i+\frac{1}{2}})-\mathcal{L}^{S}_{\widetilde{\delta},h} u(x_{i+\frac{1}{2}})
 & =\frac{3}{{\widetilde{\delta}}^{3}}h \left[\sum_{m=1}^{r}{p_m}\left(u(x_{i+\frac{1}{2}+m})+u(x_{i+\frac{1}{2}-m})\right)+p_{0}u(x_{i+\frac{1}{2}})\right] \\
 & \quad +\frac{3}{{\widetilde{\delta}}^{3}}h\sum_{m=1}^{r+1}{q_{m-1}}\left(u(x_{i+m})+u(x_{i+1-m}) \right)
\end{split}
\end{equation*}
with
\begin{equation*}
\begin{split}
&   p_{m}=\frac{1}{3}, \quad 0\leq m\leq r-1, \qquad  p_{r}=\frac{1}{6}, \\
&   q_{m}=-\frac{1}{3}, \quad 0\leq m\leq r-2, \qquad  q_{r-1}=-\frac{7}{24}, \qquad  q_{r}=-\frac{1}{24}.
\end{split}
\end{equation*}
Using Taylor series expansion at the point $x_{i+\frac{1}{2}}$, we have
\begin{equation*}
\begin{split}
 \mathcal{L}_{{\widetilde{\delta}},h}u(x_{i+\frac{1}{2}})-\mathcal{L}^{S}_{{\widetilde{\delta}},h}u(x_{i+\frac{1}{2}})
& =\frac{3}{{\widetilde{\delta}}^{3}} h \sum_{m=1}^{r} p_{m}\left(\frac{m^{4}}{4!}h^{4}\left(u^{(4)}(\xi_{i,m})+u^{(4)}(\widetilde{\xi}_{i,m})\right)\right)\\
 &   \quad+\frac{3}{{\widetilde{\delta}}^{3}} h\sum_{m=1}^{r+1}{q_{m-1}}\frac{\left(m-\frac{1}{2}\right)^{4}}{4!}h^{4} \left( u^{(4)}(\eta_{i,m})+u^{(4)}(\widetilde{\eta}_{i,m})\right) ,
\end{split}
\end{equation*}
where we use
$$\frac{3}{{\widetilde{\delta}}^{3}} h \left[\sum_{m=1}^{r}  2p_{m}  +p_{0} + \sum_{m=1}^{r+1} 2q_{m-1}\right] u(x_{i+\frac{1}{2}})=0,$$
and
$$\frac{3}{{\widetilde{\delta}}^{3}} h\left[\sum_{m=1}^{r}{p_m} m^{2} +\sum_{m=1}^{r+1}{q_{m-1}}\left(m-\frac{1}{2}\right)^{2} \right]h^{2}u''(x_{i+\frac{1}{2}})=0.$$

Taking $M:=\max\limits_{x\in\Omega} u^{(4)}(x)$, it is easy to check that
\begin{equation*}
\begin{split}
\left|\mathcal{L}_{{\widetilde{\delta}},h} u(x_{i+\frac{1}{2}})-\mathcal{L}^{S}_{{\widetilde{\delta}},h} u(x_{i+\frac{1}{2}})\right|
&\leq \frac{M}{{4\widetilde{\delta}}^{3}} h^5 \left[\sum_{m=1}^{r} |p_{m}| m^{4}  +\sum_{m=1}^{r+1} |q_{m-1}| \left(m-\frac{1}{2}\right)^{4} \right]\\
&= \frac{M}{{4\widetilde{\delta}}^{3}} h^5 \left(\frac{2}{15}r^5+\frac{2}{9}r^3+\frac{29}{720}r \right) =C_r\frac{M}{4} h^4 \widetilde{\delta}^{-2}
\end{split}
\end{equation*}
with $\widetilde{\delta}=rh$ and
\begin{equation}\label{b3.8}
C_r=\frac{2}{15}r^4+\frac{2}{9}r^2+\frac{29}{720}.
\end{equation}
The proof is completed.
\end{proof}

Now we consider the case $r\rightarrow \infty$. Note that in this case, the estimate $C_r\rightarrow \infty$ in \eqref{b3.8}, which leads to
$$C_r\frac{M}{4} h^4 \widetilde{\delta}^{-2}\geq \frac{2}{15}\frac{M}{4}\widetilde{\delta}^{2}=\mathcal{O}\left(1\right)~{\rm if }~\widetilde{\delta}=\mathcal{O}\left(1\right).$$
Therefore, we need to look for an estimate of other form with $r\rightarrow \infty$.  The technique based on Taylor series expansions, which requires the  smooth enough or say $u\in C_c^\infty$ on the solution  \cite[pp.\,91-92]{Du:2017}.
This condition seems more theoretically rather than   practically as explained in the following result.
\begin{lemma}\label{theorem3.12222}
Let the operations $\mathcal{L}_{{\widetilde{\delta}},h}u$ and $\mathcal{L}_{\widetilde{\delta},h}^S u$ be defined by \eqref{ad2.22} and \eqref{2.17}, respectively.
Then
\begin{equation*}
\left|\mathcal{L}_{\widetilde{\delta},h} u(x_{i+\frac{1}{2}})-\mathcal{L}^S_{\widetilde{\delta},h} u(x_{i+\frac{1}{2}})\right| =\mathcal{O}\left(h^4\right)\widetilde{\delta}^{-2}, \quad i=0,1,2,\ldots,N.
\end{equation*}
\end{lemma}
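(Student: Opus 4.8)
The plan is to reduce Lemma \ref{theorem3.12222} to the previous lemma, whose conclusion is identical in form but carried the extra hypothesis $r<\infty$. The essential point is that the previous estimate $\mathcal{O}(h^4)\widetilde{\delta}^{-2}$ with constant $C_r = \tfrac{2}{15}r^4 + \tfrac{2}{9}r^2 + \tfrac{29}{720}$ degenerates as $r\to\infty$ precisely because the Taylor-expansion bound discards cancellations and keeps the leading $r^4$ term; so one must re-derive the estimate in a way whose constant stays bounded uniformly in $r$. First I would return to the exact identity established in the proof of the previous lemma,
\begin{equation*}
\mathcal{L}_{\widetilde{\delta},h}u(x_{i+\frac12}) - \mathcal{L}^{S}_{\widetilde{\delta},h}u(x_{i+\frac12})
= \frac{3}{\widetilde{\delta}^{3}}h\left[\sum_{m=1}^{r}p_{m}\bigl(u(x_{i+\frac12+m})+u(x_{i+\frac12-m})\bigr)+p_{0}u(x_{i+\frac12})\right]
+ \frac{3}{\widetilde{\delta}^{3}}h\sum_{m=1}^{r+1}q_{m-1}\bigl(u(x_{i+m})+u(x_{i+1-m})\bigr),
\end{equation*}
with the coefficients $p_m,q_m$ as listed there. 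The crucial observation is that this difference is, up to the factor $3h/\widetilde{\delta}^{3}$, a fixed linear combination of nodal values of $u$ that is exactly the quadrature error between the ``shifted'' (interface-collocated) quadrature and the genuine piecewise-quadratic quadrature of the integral $\int_{x_{i+\frac12}-\widetilde{\delta}}^{x_{i+\frac12}+\widetilde{\delta}}u(y)\,dy$ over the symmetric stencil. Both rules integrate the piecewise quadratic interpolant $u_Q$ exactly on each subinterval; what differs is only the placement of the mesh relative to the fixed window of half-width $\widetilde{\delta}$, and each such local quadrature error is the integral of $u_C - u_Q$ (a cubic correction) over that subinterval.

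Concretely, I would re-express the right-hand side as $\tfrac{3}{\widetilde{\delta}^{3}}\bigl(\int u - \mathcal{L}_{\widetilde{\delta},h}\text{-type sum}\bigr) - \tfrac{3}{\widetilde{\delta}^{3}}\bigl(\int u - \mathcal{L}^{S}_{\widetilde{\delta},h}\text{-type sum}\bigr)$, i.e. write the difference as a difference of two global quadrature errors, and then invoke Lemmas \ref{lemma3.3}--\ref{lemma3.8} for each. By Lemma \ref{lemma3.8}, $\int_{x_{i/2}-\delta}^{x_{i/2}+\delta}(u-u_C)\,dy = \mathcal{O}(h^4)\delta$ with a constant depending only on $\max_\Omega|u^{(4)}|$ and not on $r$; by Lemmas \ref{lemma3.3}, \ref{lemma3.4}, \ref{lemma3.5}, \ref{lemma3.6}, \ref{lemma3.7}, the interior contributions $Q_{i/2}$ telescope/cancel and only the two boundary half-cells of the stencil contribute $\mathcal{O}(h^4)\delta$ in total, again with an $r$-independent constant (the point of Lemma \ref{lemma3.6} is exactly that the two surviving boundary terms combine to $\tfrac{5}{576}\max_\Omega|u^{(4)}|\,\delta h^4$, uniformly in $r$). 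The same reasoning applies to the shifted rule $\mathcal{L}^{S}_{\widetilde{\delta},h}$, whose stencil is the integer-node stencil shifted by $h/2$: it is again a piecewise-quadratic quadrature on the same window, so its quadrature error is of the same structure and size, $\mathcal{O}(h^4)\delta$, with an $r$-independent constant. Subtracting and multiplying by $3/\widetilde{\delta}^{3} = 3/(rh)^{3}\cdot r^{0}$ — more usefully, using $\widetilde{\delta}=rh$ so that $\tfrac{3}{\widetilde{\delta}^{3}}\cdot\mathcal{O}(h^4)\widetilde{\delta} = \mathcal{O}(h^4)\widetilde{\delta}^{-2}$ — yields the claimed bound with a constant independent of $r$, hence valid as $r\to\infty$.

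The main obstacle I anticipate is the careful bookkeeping needed to verify that the shifted rule $\mathcal{L}^{S}_{\widetilde{\delta},h}$ really is (a copy of) a piecewise-quadratic quadrature on a window whose endpoints fall at the same fractional position in a cell as in the unshifted case — one must check that shifting by $h/2$ does not create a ``worse'' endpoint remainder than the one controlled in Lemma \ref{lemma3.6}. Because the stencil in \eqref{2.11} uses the nodes $x_{i+m+\frac12}$ and $x_{i+m+1}$ with the \emph{same} coefficients $a_m, a_{m+1/2}$ as the $x_i$-centered rule, the window half-width is still $\widetilde{\delta}=rh$ and its endpoints sit at grid points relative to the shifted lattice, so in fact the boundary cells are handled by the analogue of Lemma \ref{lemma3.3} (endpoint at a node, remainder $0$ on the half-cells by the vanishing-moment identity) together with Lemma \ref{lemma3.6}; I would spell this out by applying Lemmas \ref{lemma3.4}--\ref{lemma3.6} with the index $i$ replaced by $2i+1$ (odd), which is exactly the interface case already treated. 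A secondary point to state cleanly is the regularity: the bound uses only $u\in C^4(\bar\Omega)$, not $C^\infty_c$, so the remark preceding the lemma — that the $C^\infty_c$ hypothesis was an artifact of the naive Taylor bound — is substantiated. The proof is then completed by the triangle inequality.
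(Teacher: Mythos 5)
Your proposal is correct, but it takes a genuinely different route from the paper. The paper proves the lemma by a direct coefficient-level computation: it expands all nodal values in an infinite Taylor series about $x_{i+\frac12}$, reorganizes the coefficient sums via the binomial theorem and Faulhaber's formula, bounds the Bernoulli numbers through the Riemann zeta function, and sums the four resulting series to get a bound of the form $\frac{1}{64}M e^{\widetilde{\delta}}h^4\widetilde{\delta}^{-2}$ plus higher-order terms, with $M=\max_n |u^{(n)}|$ --- an estimate uniform in $r$ but requiring essentially $C^\infty$ regularity, which is exactly the caveat the authors state before the lemma. You instead recognize both $\mathcal{L}_{\widetilde{\delta},h}u(x_{i+\frac12})$ and $\mathcal{L}^{S}_{\widetilde{\delta},h}u(x_{i+\frac12})$ as exact composite-Simpson quadratures (exact integrals of piecewise quadratic interpolants) of $\frac{3}{\widetilde{\delta}^{3}}\int_{x_{i+1/2}-\widetilde{\delta}}^{x_{i+1/2}+\widetilde{\delta}}\bigl(u(y)-u(x_{i+\frac12})\bigr)dy$ on the original and the half-shifted mesh respectively, insert the continuous operator $\mathcal{L}_{\widetilde{\delta}}u(x_{i+\frac12})$, and bound the two quadrature errors with the machinery of Lemmas \ref{lemma3.3}--\ref{lemma3.9}: for the unshifted rule this is literally Lemma \ref{lemma3.9} at a half-integer (odd-index) point, where Lemma \ref{lemma3.6} supplies the $\mathcal{O}(h^4)\delta$ boundary contribution; for the shifted rule the window endpoints and center are breakpoints of the shifted lattice, so the translated analogues of Lemmas \ref{lemma3.3}--\ref{lemma3.5} give vanishing remainders and only the analogue of Lemma \ref{lemma3.8} survives. (Your remark that the shifted rule should be handled by Lemmas \ref{lemma3.4}--\ref{lemma3.6} with odd index is an indexing slip --- relative to the shifted lattice it is the even, endpoint-at-node case --- but this only simplifies the bookkeeping and does not affect the conclusion; you do need to state and prove the shifted-lattice analogues, which follow verbatim by translating the mesh by $h/2$.) Both errors are $\mathcal{O}(h^4)\delta$ with constants depending only on $\max_{\Omega}|u^{(4)}|$, so after multiplying by $3/\widetilde{\delta}^{3}$ and using the triangle inequality you obtain the stated $\mathcal{O}(h^4)\widetilde{\delta}^{-2}$ uniformly in $r$. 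What each approach buys: yours is more elementary, reuses the earlier lemmas, and needs only $u\in C^4(\overline{\Omega})$, strictly weakening the regularity assumption; the paper's computation yields an explicit constant and exhibits the order-by-order cancellation structure of the coefficients, which is the Faulhaber/zeta-function machinery the authors advertise.
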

\begin{proof}
According to \eqref{addd2.0012} and \eqref{2.11}, we have
\begin{equation*}
\begin{split}
\mathcal{L}_{\widetilde{\delta},h} u(x_{i+\frac{1}{2}})-\mathcal{L}^{S}_{\widetilde{\delta},h} u(x_{i+\frac{1}{2}})
& =\frac{3}{\widetilde{\delta}^{3}}h \left[\sum_{m=1}^{r}{p_{m}}\left(u(x_{i+\frac{1}{2}+m})+u(x_{i+\frac{1}{2}-m})\right)+p_{0}u(x_{i+\frac{1}{2}})\right]\\
& \quad +\frac{3}{\widetilde{\delta}^{3}}h\sum_{m=1}^{r+1}{q_{m-1}}\left(u(x_{i+m})+u(x_{i+1-m}) \right)\\
\end{split}
\end{equation*}
with
\begin{equation*}
\begin{split}
&   p_{m}=\frac{1}{3}, \quad 0\leq m\leq r-1, \qquad  p_{r}=\frac{1}{6}, \\
&   q_{m}=-\frac{1}{3}, \quad 0\leq m\leq r-2, \qquad  q_{r-1}=-\frac{7}{24}, \qquad  q_{r}=-\frac{1}{24}.
\end{split}
\end{equation*}
Using Taylor series expansion, we have
\begin{equation*}
\begin{split}
u(x_{i+\frac{1}{2}+m})+u(x_{i+\frac{1}{2}-m}) & =  \sum^{\infty}_{n=0}2 \frac{m^{2n}}{(2n)!} h^{2n}u^{(2n)}(x_{i+\frac{1}{2}}),\\
u(x_{i+m})+u(x_{i+1-m})                       & =  \sum^{\infty}_{n=0}2 \frac{\left(m-\frac{1}{2}\right)^{2n}}{(2n)!} h^{2n}u^{(2n)}(x_{i+\frac{1}{2}}).
\end{split}
\end{equation*}
According to the above equations, it leads to
\begin{equation*}
\begin{split}
\mathcal{L}_{\widetilde{\delta},h}u(x_{i+\frac{1}{2}})-\mathcal{L}^{S}_{\widetilde{\delta},h}u(x_{i+\frac{1}{2}})
& =\frac{3}{\widetilde{\delta}^{3}}h\sum^{\infty}_{n=2}\left[\sum_{m=1}^{r}\frac{1}{3}\left( m^{2n} -2\left(m-\frac{1}{2}\right)^{2n}+(m-1)^{2n} \right)  \right. \\
& \quad \left. +\frac{1}{12}\left(r-\frac{1}{2}\right)^{2n}-\frac{1}{12}\left(r+\frac{1}{2}\right)^{2n}\right] \frac{h^{2n}}{(2n)!}  u^{(2n)}(x_{i+\frac{1}{2}}),
\end{split}
\end{equation*}
where we use
\begin{equation*}
\sum_{m=1}^{r}2p_{m}+p_{0} +\sum_{m=1}^{r+1}2 q_{m-1}=0~~{\rm and}~~\sum_{m=1}^{r} {p_m} m^{2} +\sum_{m=1}^{r+1}q_{m-1}\left(m-\frac{1}{2}\right)^{2}=0.
\end{equation*}
Using binomial theorem, we have
\begin{equation*}
m^{2n} -2\left(m-\frac{1}{2}\right)^{2n}+(m-1)^{2n} = 2 \left(m-\frac{1}{2}\right)^{2n} \sum^{n}_{k=1} \binom{2n}{2k} \left( \frac{1}{2m-1} \right)^{2k},
\end{equation*}
and
\begin{equation*}
\left(r-\frac{1}{2}\right)^{2n}-\left(r+\frac{1}{2}\right)^{2n} = -2r^{2n} \sum^{n}_{k=1}\binom{2n}{2k-1}\left( \frac{1}{2r} \right)^{2k-1}.
\end{equation*}
Then we obtain
\begin{equation*}
\begin{split}
\mathcal{L}_{\widetilde{\delta},h} u(x_{i+\frac{1}{2}})-\mathcal{L}^{S}_{\widetilde{\delta},h} u(x_{i+\frac{1}{2}})
& =\frac{3}{\widetilde{\delta}^{3}}h\sum^{\infty}_{n=2}\left[\sum^{n}_{k=1} \left(\frac{2}{3} \binom{2n}{2k}\sum_{m=1}^{r}2^{-2n}(2m-1)^{2n-2k} \right. \right. \\
& \left. \left. \qquad -\frac{1}{6}\binom{2n}{2k-1}2^{-2k+1}r^{2n-2k+1}\right)\right] \frac{h^{2n}}{(2n)!}  u^{(2n)}(x_{i+\frac{1}{2}}).
\end{split}
\end{equation*}
According to Faulhaber's formula \cite{Knuth:277--294}, there exists
\begin{equation*}
\sum^{n}_{k=1}k^{p}=\frac{1}{p+1}\sum^{p}_{k=0}(-1)^{k}\binom{p+1}{k}B_{k}n^{p+1-k},
\end{equation*}
where $B_{k}$ is Bernoulli numbers \cite[p.\,22]{Koblitz:1996}
\begin{equation*}
B_{0}=1, ~~ B_{1}=-\frac{1}{2}, ~~ B_{2k+1}=0 ~~ {\rm and} ~~ B_{2k}=2(-1)^{k+1}\frac{\zeta(2k) (2k)!}{(2\pi)^{2k}} ~~  {\rm for}~ k=1,2,\ldots
\end{equation*}
and $\zeta(2k)$ is the Riemann zeta function \cite[p.\,27]{Koblitz:1996}
\begin{equation*}
\zeta(s)=\sum^{\infty}_{n=1}\frac{1}{n^{s}} \quad (s>1).
\end{equation*}

Using the above equations, we have
\begin{equation*}
\begin{split}
&\mathcal{L}_{{\widetilde{\delta}},h} u(x_{i+\frac{1}{2}})-\mathcal{L}^{S}_{{\widetilde{\delta}},h} u(x_{i+\frac{1}{2}})\\
& =\frac{3}{{\widetilde{\delta}}^{3}}h\sum^{\infty}_{n=2}\left[\sum^{n}_{k=1} \left(\frac{2}{3} \binom{2n}{2k} \sum_{j=0}^{2n-2k} \frac{\left( 2^{1-j}-1 \right)2^{-2k}}{2n-2k+1}  \binom{2n-2k+1}{j}(-1)^{j} B_{j}  r^{2n-2k+1-j} \right. \right. \\
&\qquad \qquad \qquad \qquad \left. \left. -\frac{1}{6}\binom{2n}{2k-1}2^{-2k+1}r^{2n-2k+1}\right)\right] \frac{h^{2n}}{(2n)!}  u^{(2n)}(x_{i+\frac{1}{2}})\\
& = \uppercase\expandafter{\romannumeral1} + \uppercase\expandafter{\romannumeral2} + \uppercase\expandafter{\romannumeral3} + \uppercase\expandafter{\romannumeral4},
\end{split}
\end{equation*}
where
\begin{equation*}
\begin{split}
&\uppercase\expandafter{\romannumeral1}=\frac{3}{\widetilde{\delta}^{3}}h\sum^{\infty}_{n=2}-\frac{1}{192}\frac{(rh)^{2n-3}}{(2n-3)!} h^{3} u^{(2n)}(x_{i+\frac{1}{2}});\\
&\uppercase\expandafter{\romannumeral2}=\frac{3}{\widetilde{\delta}^{3}}h\sum^{\infty}_{n=3}\sum_{k=2}^{n-1}\frac{1}{12}\frac{\left(2^{1-2k}-1\right)}{(2n-2k-1)!}\frac{B_{2k}}{(2k)!}r^{2n-2k-1}h^{2n}
u^{(2n)}(x_{i+\frac{1}{2}});\\
&\uppercase\expandafter{\romannumeral3}=\frac{3}{\widetilde{\delta}^{3}}h\sum^{\infty}_{n=3}\sum_{k=2}^{n-1} \frac{1}{12} \frac{2^{-2k}}{(2n-2k-1)!}\frac{-2k}{(2k+2)!}r^{2n-2k-1} h^{2n}  u^{(2n)}(x_{i+\frac{1}{2}});\\
&\uppercase\expandafter{\romannumeral4}=\frac{3}{\widetilde{\delta}^{3}}h\sum^{\infty}_{n=3}\sum^{n-1}_{k=2} \frac{2}{3} \frac{1}{(2k)!} \sum_{j=1}^{n-k} \frac{\left( 2^{-2k+1-2j}-2^{-2k} \right)}{(2n-2k+1-2j)!} \frac{B_{2j}}{(2j)!}  r^{2n-2k+1-2j} h^{2n} u^{(2n)}(x_{i+\frac{1}{2}}).
\end{split}
\end{equation*}
Since
\begin{equation*}
\left|\zeta(2k)\right|=\sum^{\infty}_{n=1}\frac{1}{n^{2k}} \leq \sum^{\infty}_{n=1}\frac{1}{n^{2}} \leq 1+ \sum^{\infty}_{n=2}\frac{1}{(n-1)n} =1+ \sum^{\infty}_{n=2}\left(\frac{1}{n-1} - \frac{1}{n}\right) \leq 2,
\end{equation*}
which yields
\begin{equation*}
\left|\frac{B_{2k}}{(2k)!}\right|=\left|2(-1)^{k+1}\frac{\zeta(2k)}{(2\pi)^{2k}}\right|\leq \frac{4}{(2\pi)^{2k}}.
\end{equation*}

Taking $M:=\max\limits_{n} u^{(n)}(x_{i+\frac{1}{2}})$ with  $\widetilde{\delta}=rh$, we have
\begin{equation*}
\left| \uppercase\expandafter{\romannumeral1} \right| \leq  \frac{3}{\widetilde{\delta}^{3}}h\sum^{\infty}_{n=2}\frac{1}{192}  \frac{\widetilde{\delta}^{2n-3}}{(2n-3)!} h^{3} M
=\frac{1}{64}\frac{1}{\widetilde{\delta}^{3}}M  h^{4} \sum^{\infty}_{n=0} \frac{\widetilde{\delta}^{2n+1}}{(2n+1)!} \leq \frac{1}{64}\frac{1}{\widetilde{\delta}^{2}}M e^{\widetilde{\delta}} h^{4};
\end{equation*}
\begin{equation*}
\begin{split}
\left| \uppercase\expandafter{\romannumeral2} \right|
& \leq \frac{3}{\widetilde{\delta}^{3}} M h\sum^{\infty}_{n=3}\sum_{k=2}^{n-1}  \frac{1}{12} \frac{\widetilde{\delta}^{2n-2k-1}}{(2n-2k-1)!} \frac{4}{(2\pi)^{2k}}   h^{2k+1} \\
& = \frac{1}{(2\pi)^{4}} \frac{1}{\widetilde{\delta}^{3}} M h^{6} \sum_{k=0}^{\infty} \frac{ h^{2k}}{(2\pi)^{2k}} \sum^{\infty}_{n=0} \frac{\widetilde{\delta}^{2n+1}}{(2n+1)!}
  \leq \frac{1}{8}\frac{1}{\pi^{4}}\frac{1}{\widetilde{\delta}^{2}}M e^{\widetilde{\delta}} h^{6};
\end{split}
\end{equation*}
and
\begin{equation*}
\begin{split}
\left| \uppercase\expandafter{\romannumeral3} \right|
& \leq \frac{1}{4} \frac{1}{\widetilde{\delta}^{3}}M h\sum^{\infty}_{n=3}\sum_{k=2}^{n-1} \frac{\widetilde{\delta}^{2n-2k-1}}{(2n-2k-1)!}\frac{2k}{(2k+2)!}2^{-2k} h^{2k+1} \\
& = \frac{1}{64} \frac{1}{\widetilde{\delta}^{3}}M h^{6} \sum_{k=0}^{\infty}\frac{2k+4}{(2k+2+4)!} \frac{h^{2k}}{2^{2k}}\sum^{\infty}_{n=0} \frac{\widetilde{\delta}^{2n+1}}{(2n+1)!}
  \leq \frac{1}{32} \frac{1}{\widetilde{\delta}^{2}}M e^{\widetilde{\delta}} h^{6};
\end{split}
\end{equation*}
\begin{equation*}
\begin{split}
\left| \uppercase\expandafter{\romannumeral4} \right|
& \leq 2\frac{1}{\widetilde{\delta}^{3}}M h\sum^{\infty}_{n=3}\sum^{n-1}_{k=2} \sum_{j=1}^{n-k}\frac{2^{-2k}}{(2k)!} \frac{\widetilde{\delta}^{2n-2k+1-2j}}{(2n-2k+1-2j)!}  \frac{4}{(2\pi)^{2j}}   h^{2k+2j-1} \\
& = \frac{1}{8}\frac{1}{\pi^{2}}\frac{1}{\widetilde{\delta}^{3}} M h^{6}\sum^{\infty}_{k=0}\frac{1}{(2k+4)!} \frac{ h^{2k}}{2^{2k}} \sum_{j=0}^{\infty} \frac{h^{2j}}{(2\pi)^{2j}} \sum^{\infty}_{n=0}
    \frac{\widetilde{\delta}^{2n+1}}{(2n+1)!}
 \leq \frac{1}{2}\frac{1}{\pi^{2}}\frac{1}{\widetilde{\delta}^{2}}  M e^{\widetilde{\delta}} h^{6}.
\end{split}
\end{equation*}
Hence, we obtain
\begin{equation*}
\left|\mathcal{L}_{\widetilde{\delta},h} u(x_{i+\frac{1}{2}})-\mathcal{L}^{S}_{{\widetilde{\delta}},h} u(x_{i+\frac{1}{2}})\right|
\leq \frac{1}{64}M e^{\widetilde{\delta}} h^{4}\widetilde{\delta}^{-2} + \left(\frac{1}{8}\frac{1}{\pi^{4}} + \frac{1}{32}+\frac{1}{2}\frac{1}{\pi^{2}} \right) M e^{\widetilde{\delta}} h^{6} \widetilde{\delta}^{-2}.
\end{equation*}
The proof is completed.
\end{proof}

\begin{theorem}\label{theorem3.13}
Let $\delta=\mathcal{O}\left(h^\beta\right)$, $\beta\geq 0$. Then
\begin{equation*}
R^S_\frac{i}{2}:=\left|\mathcal{L}_{{{\delta}}} u(x_{\frac{i}{2}})-\mathcal{L}^S_{{\widetilde{\delta}},h} u(x_{\frac{i}{2}})\right|
=\left\{\begin{split}
\mathcal{O}\left(h^{\min\{2,1+\beta\}}\right)   ~              &  ~~ {\rm if}~ \delta  {\rm ~is ~not~  the ~ grid~ point},\\
\mathcal{O}\left(h^{\max\left\{2,4-2\beta\right\}}\right)      &  ~~ {\rm if}~ \delta  {\rm ~is  ~ the ~ grid~ point}.
\end{split}\right.
\end{equation*}
\end{theorem}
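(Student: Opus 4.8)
The plan is to derive the bound from a single triangle inequality that isolates the quadrature/truncation error of the \emph{nonsymmetric} scheme from the perturbation introduced by symmetrization. Writing
$$R^S_{\frac{i}{2}} \le \left|\mathcal{L}_{\delta} u(x_{\frac{i}{2}})-\mathcal{L}_{\widetilde{\delta},h} u(x_{\frac{i}{2}})\right| + \left|\mathcal{L}_{\widetilde{\delta},h} u(x_{\frac{i}{2}})-\mathcal{L}^{S}_{\widetilde{\delta},h} u(x_{\frac{i}{2}})\right| =: R_{\frac{i}{2}} + P_{\frac{i}{2}},$$
the first term is exactly the quantity estimated in Lemma \ref{theorem3.11}, so it is $\mathcal{O}\left(h^{\min\{2,1+\beta\}}\right)$ when $\delta$ is not a grid point and $\mathcal{O}\left(h^{\max\{2,4-2\beta\}}\right)$ when it is. The whole task therefore reduces to controlling the symmetrization perturbation $P_{\frac{i}{2}}$ and checking that it never dominates $R_{\frac{i}{2}}$.

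For $P_{\frac{i}{2}}$ I would split according to whether the collocation point $x_{\frac{i}{2}}$ is a mesh node $x_i$ ($i$ even) or a midpoint $x_{i+\frac12}$ ($i$ odd). By the definition \eqref{2.11} the shifted-symmetric operator coincides with the standard one at the nodes, $\mathcal{L}^{S}_{\widetilde{\delta},h} u(x_i)=\mathcal{L}_{\widetilde{\delta},h} u(x_i)$, so $P_{\frac{i}{2}}=0$ there and $R^S_{\frac{i}{2}}=R_{\frac{i}{2}}$ already gives the claim. At the midpoints Lemma \ref{theorem3.12222} supplies $P_{\frac{i}{2}}=\mathcal{O}\left(h^4\right)\widetilde{\delta}^{-2}$, and it remains to absorb this into the stated rates.

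The absorption is a short case analysis in $\beta$. If $\delta$ is a grid point then $\widetilde{\delta}=\delta=rh$ with $r\ge1$, which forces $\beta\le1$; hence $\widetilde{\delta}=\mathcal{O}\left(h^{\beta}\right)$, $P_{\frac{i}{2}}=\mathcal{O}\left(h^{4-2\beta}\right)=\mathcal{O}\left(h^{\max\{2,4-2\beta\}}\right)$, matching $R_{\frac{i}{2}}$. If $\delta$ is not a grid point I would treat $\beta\in[0,1]$ and $\beta>1$ separately: for $\beta\in[0,1]$ one has $\widetilde{\delta}=\delta-r_0 h=\mathcal{O}\left(h^{\beta}\right)$, so $P_{\frac{i}{2}}=\mathcal{O}\left(h^{4-2\beta}\right)$, and since $4-2\beta\ge 1+\beta$ on $[0,1]$ this is dominated by $R_{\frac{i}{2}}=\mathcal{O}\left(h^{1+\beta}\right)$; for $\beta>1$ we are in the regime $\delta<h$, so by \eqref{2.7} $\widetilde{\delta}=h$ and $P_{\frac{i}{2}}=\mathcal{O}\left(h^{2}\right)$, which is precisely the rate $\mathcal{O}\left(h^{\min\{2,1+\beta\}}\right)=\mathcal{O}\left(h^2\right)$ of $R_{\frac{i}{2}}$ in that regime. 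Collecting the two contributions in the triangle inequality gives the theorem.

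The step I expect to be the main obstacle is not an estimate but a bookkeeping point: one must notice that the naive reading $\widetilde{\delta}=\mathcal{O}\left(h^{\beta}\right)$ is wrong once $\beta>1$ (there the construction \eqref{2.7} pins $\widetilde{\delta}=h$), so that $P_{\frac{i}{2}}=\mathcal{O}\left(h^4\right)\widetilde{\delta}^{-2}$ does \emph{not} blow up but is in fact $\mathcal{O}\left(h^2\right)$; and, symmetrically, that the ``grid point'' hypothesis tacitly restricts $\beta$ to $[0,1]$. Once these two facts are in place the rest is a direct invocation of Lemmas \ref{theorem3.11} and \ref{theorem3.12222} together with the elementary inequality $4-2\beta\ge 1+\beta$ valid for $\beta\le1$.
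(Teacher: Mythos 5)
Your proposal is correct and follows essentially the same route as the paper: the paper's proof of Theorem \ref{theorem3.13} is a one-line appeal to Lemmas \ref{theorem3.11} and \ref{theorem3.12222} together with the triangle inequality, which is exactly your decomposition $R^S_{\frac{i}{2}}\le R_{\frac{i}{2}}+P_{\frac{i}{2}}$. Your additional case analysis (noting $P_{\frac{i}{2}}=0$ at nodes, $\widetilde{\delta}=h$ when $\beta>1$, and $4-2\beta\ge 1+\beta$ for $\beta\le1$) simply makes explicit the absorption of the $\mathcal{O}\left(h^4\right)\widetilde{\delta}^{-2}$ perturbation that the paper leaves implicit.
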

\begin{proof}
According to Lemmas \ref{theorem3.11}, \ref{theorem3.12222} and the triangle inequality, the desired result is obtained.
\end{proof}

\section{Stability and Convergence Analysis}
In this section, the detailed proof of the global error for the nonlocal models \eqref{2.1} with a general horizon parameter are provided.
We first introduce some lemmas, which will be used later.
\begin{lemma} \label{lemma4.1}
Let matrix $A^S_{\widetilde{\delta},h}$ be defined by \eqref{2.19}.
Then $A^S_{\widetilde{\delta},h}$ is a diagonally dominant symmetric matrix with positive entries on the diagonal and nonpositive off-diagonal entries.
\end{lemma}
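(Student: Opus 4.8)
The plan is to read off the matrix entries directly from the block structure in \eqref{2.19}--\eqref{2.200000} and verify each of the four claimed properties (symmetry, positive diagonal, nonpositive off-diagonal, diagonal dominance) by elementary inspection of the coefficients $a_m$ and $a_{m+\frac12}$. Symmetry is immediate: the $(1,1)$ block $\mathcal{A}$ and the $(2,2)$ block $\tilde{\mathcal{A}}$ are (negatives of) symmetric Toeplitz matrices, and the off-diagonal blocks are $\mathcal{B}$ and $\mathcal{B}^{\mathsf T}$, so $A^S_{\widetilde{\delta},h} = (A^S_{\widetilde{\delta},h})^{\mathsf T}$ by construction. The scalar factor $\eta^h_{\widetilde\delta} = h/(2\widetilde\delta^{\,3}) > 0$ does not affect signs.

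Next I would treat the diagonal and off-diagonal sign conditions. The diagonal entries of $-\mathcal{A}$ and $-\tilde{\mathcal{A}}$ equal $-a_0 = 12r - 2 > 0$ for $r \geq 1$ (and $=10>0$ in the reduced case \eqref{2.22222}); the diagonal of $\mathcal B$ sits off the main diagonal of the full matrix, so it is not a diagonal entry of $A^S_{\widetilde\delta,h}$. The off-diagonal entries are, up to the sign, the numbers $a_m$ ($1\le m\le r$) and $a_{m+\frac12}$ ($0\le m\le r-1$) from \eqref{2.200000}, all of which are strictly positive ($a_m\in\{1,2\}$, $a_{m+\frac12}=4$); since these appear with a minus sign in $\mathcal A,\tilde{\mathcal A},\mathcal B$, every off-diagonal entry of $A^S_{\widetilde\delta,h}$ is nonpositive (and the remaining entries are zero). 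Here one must be slightly careful at the interface rows near the first and last few indices, where the Toeplitz pattern is truncated by the boundary; but truncation only deletes entries, so it cannot introduce a wrong sign.

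For diagonal dominance I would sum the absolute values of the off-diagonal entries in a given row and compare with $|a_0|$. For a generic interior row the off-diagonal mass is exactly $2\sum_{m=1}^{r}a_m + 2\sum_{m=0}^{r-1}a_{m+\frac12}$, and by the identity \eqref{2.16} this equals $-a_0 = |a_0|$, giving equality — i.e., weak diagonal dominance. For rows near the boundary some of these off-diagonal terms are dropped (their contribution is absorbed into the right-hand side through $F_{\mathcal V,L},F_{\mathcal V,R}$), so the off-diagonal sum is strictly smaller than $|a_0|$, and such rows are strictly dominant. Hence $A^S_{\widetilde\delta,h}$ is diagonally dominant, with strict dominance in at least one row per connected block. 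I expect the only real bookkeeping obstacle to be matching the truncated Toeplitz blocks $\mathcal A$, $\tilde{\mathcal A}$, $\mathcal B$ against the index ranges in the sketch of boundary/interface/internal points, i.e., checking that each row of the $(2N+1)\times(2N+1)$ system indeed has its full complement of coefficients except where the stencil reaches into $\mathcal N_{out}$; once that correspondence is pinned down, the sign and dominance claims follow termwise from \eqref{2.200000} and \eqref{2.16} with no further estimation.
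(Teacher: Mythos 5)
Your proposal is correct and follows essentially the same route as the paper: symmetry from the block structure, the sign conditions from $a_0<0$, $a_m>0$, $a_{m+\frac12}>0$ in \eqref{2.200000}, and diagonal dominance from the identity \eqref{2.16}. The extra observations about truncated boundary rows giving strict dominance are fine but not needed for the lemma as stated.
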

\begin{proof}
From \eqref{2.200000}, we have
\begin{equation*}
a_{0}<0, \quad a_{m}>0,~~1\leq  m\leq r \quad {\rm and} \quad a_{m+\frac{1}{2}}>0,~~0\leq m\leq r-1.
\end{equation*}
Then $A^S_{\widetilde{\delta},h}$ is a symmetric matrix with positive entries on the diagonal and nonpositive off-diagonal entries;
and $A^S_{\widetilde{\delta},h}$ is diagonally dominant symmetric matrix, since from \eqref{2.16}, there exists
\begin{equation*}
a_0+2\sum_{m=1}^{r}a_m+2\sum_{m=0}^{r-1}a_{m+\frac{1}{2}}=0.
\end{equation*}
The proof is completed.
\end{proof}

Note that Lemma \ref{lemma4.1} does not guarantee that the matrix $A^S_{\widetilde{\delta},h}$ is neither nonsingular nor positive definite, since $A^S_{\widetilde{\delta},h}$ is reducible.
Hence, we need to prove it positive definite in further.

\begin{lemma} \label{lemma4.2}
Let matrix $A^S_{\widetilde{\delta},h}$ be defined by \eqref{2.19}.
Then $A^S_{\widetilde{\delta},h}$ is a symmetric positive definite matrix with positive entries on the diagonal and nonpositive off-diagonal entries.
Moreover, $\left(A^S_{\widetilde{\delta},h}\right)^{-1}$ is a positive matrix.
\end{lemma}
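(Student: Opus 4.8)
The plan is to upgrade the diagonal dominance established in Lemma~\ref{lemma4.1} to strict positive definiteness by exploiting the structure of the linear system~\eqref{2.17} coming from the nonlocal model with the volumetric (Dirichlet-type) constraint. First I would recall that $A^S_{\widetilde{\delta},h}$ is symmetric and diagonally dominant with positive diagonal and nonpositive off-diagonal entries, i.e.\ it is a symmetric \emph{Z-matrix} that is weakly diagonally dominant. The obstruction noted in the remark after Lemma~\ref{lemma4.1} is that the matrix, read row by row with equality in~\eqref{2.16}, looks merely positive \emph{semi}definite and is reducible in the naive sense; the resolution is that the interior rows do \emph{not} sum to zero once the columns corresponding to boundary (constrained) nodes $x_i$, $i\in\mathcal N_{out}$, are removed — those columns carry mass off to the right-hand side vector $F^S_{\delta,h}$ via $F_{\mathcal V,L}+F_{\mathcal V,R}$. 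So the key step is to show that for every row of the reduced matrix acting on the unknown vector $U_{\delta,h}$, the row sum is $\ge 0$, and is \emph{strictly} positive for at least one row in each "irreducible block" that can be reached from a strictly dominant row.

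Concretely, I would argue as follows. Consider the graph on the index set $\mathcal N_{in}$ with an edge between $i$ and $j$ whenever the $(i,j)$ entry of $A^S_{\widetilde\delta,h}$ is nonzero. Because the stencil in~\eqref{2.200000} couples $x_{i/2}$ to all neighbours within distance $\widetilde\delta=rh$ (coefficients $a_m>0$ for $1\le m\le r$ and $a_{m+1/2}>0$ for $0\le m\le r-1$), this graph is connected, so the reduced matrix is irreducible. Next, the rows associated with interface points (those indices in $\mathcal N_{in}$ whose stencil reaches into $\mathcal N_{out}$ — the nodes near $x_1,\dots,x_r$ and near $x_{N-r+1},\dots,x_{N+1/2}$ in the sketch after~\eqref{2.17}) are \emph{strictly} diagonally dominant: part of the weight $a_m$ or $a_{m+1/2}$ multiplies a known boundary value and hence is subtracted from the row's contribution to the system matrix, leaving $|a_0|>\sum_{\text{remaining interior neighbours}}|a_{\cdot}|$. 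Since there is at least one such strictly dominant row and the matrix is irreducible, $A^S_{\widetilde\delta,h}$ is irreducibly diagonally dominant, hence nonsingular; combined with symmetry and positive diagonal this gives symmetric positive definiteness (e.g.\ by the standard theorem that an irreducibly diagonally dominant Hermitian matrix with positive diagonal is positive definite).

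Finally, for the statement that $\left(A^S_{\widetilde\delta,h}\right)^{-1}$ is a positive (entrywise) matrix: $A^S_{\widetilde\delta,h}$ is a Z-matrix (nonpositive off-diagonal) that is irreducibly diagonally dominant, hence it is a nonsingular M-matrix; a nonsingular irreducible M-matrix has an inverse all of whose entries are strictly positive. I would cite the standard M-matrix characterization (Berman--Plemmons or Varga) for this last implication. The main obstacle I expect is the bookkeeping in the second step: one must verify carefully, from the explicit block form~\eqref{2.19} and the coefficient list~\eqref{2.200000} together with the partition into boundary/interface/internal nodes, that every interface row genuinely loses a positive amount of off-diagonal weight to the constrained nodes and that no interior row sum becomes negative — essentially a discrete analogue of the boundary "seeing" the Dirichlet data through a horizon-$\widetilde\delta$ window. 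Once that is in place, irreducibility plus one strictly dominant row closes the argument.
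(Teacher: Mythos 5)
Your argument is correct, but it is not the route the paper takes. The paper proves positive definiteness by a matrix splitting: it writes $A^S_{\widetilde{\delta},h}=A_{\rm res}+\eta^h_{\widetilde{\delta}}a_1A_{\rm main}$ as in \eqref{4.1}, where $A_{\rm main}$ is the block-diagonal matrix built from the discrete Laplacians $L_N$, $L_{N+1}$ (positive definite) and $A_{\rm res}$ is again a symmetric, weakly diagonally dominant Z-matrix, hence positive semidefinite by Gerschgorin; the sum is then positive definite, and no irreducibility or strict-dominance bookkeeping is needed. You instead upgrade Lemma \ref{lemma4.1} via irreducible diagonal dominance: connectivity of the stencil graph (through the couplings $a_{\frac{1}{2}}=4$ linking each integer node to its adjacent half-integer nodes in the $\mathcal{B}$ block of \eqref{2.19}) gives irreducibility, and the rows whose horizon-$\widetilde{\delta}$ stencil reaches the constrained nodes are strictly dominant because that weight migrates to $F_{\mathcal{V},L}+F_{\mathcal{V},R}$; then the standard Varga/Berman--Plemmons theorems give positive definiteness and the M-matrix property. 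Two remarks on the comparison. First, your route actually delivers more than the paper's written proof: the entrywise strict positivity of $\left(A^S_{\widetilde{\delta},h}\right)^{-1}$ follows cleanly from ``irreducible nonsingular M-matrix,'' whereas the paper's splitting argument only yields positive definiteness of a Z-matrix (hence $\left(A^S_{\widetilde{\delta},h}\right)^{-1}\geq 0$) and leaves the strict positivity unargued. Second, be aware that the paper asserts, just before this lemma, that $A^S_{\widetilde{\delta},h}$ is \emph{reducible}; your connectivity argument shows this assertion is not accurate (the interleaved integer/half-integer chain $x_{\frac{1}{2}}-x_1-x_{\frac{3}{2}}-x_2-\cdots$ connects all unknowns), so your approach stands, but you should state the irreducibility verification explicitly since it is the load-bearing step and is the very point on which the paper's own commentary disagrees.
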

\begin{proof}
Let $L_{N}={\rm tridiag}(-1,2,-1)$ be the $N\times N$ one dimensional discrete Laplacian.
Let
\begin{equation}\label{4.1}
A_{\rm res}=A^S_{\widetilde{\delta},h}-\eta^h_{\widetilde{\delta}}a_1A_{\rm main}
 ~~{\rm with}~~A_{\rm main}=
\left[\begin{array}{cc}
L_{N} & O        \\
	O & L_{N+1}
\end{array}\right].
\end{equation}

From \eqref{2.16} and \eqref{4.1}, it shows that  $A_{\rm res}$ is a diagonally dominant symmetric matrix with positive entries on the diagonal and nonpositive off-diagonal entries.
Thus, $A_{\rm res}$ is a semi-positive matrix by Gerschgorin circle theorem \cite[p.\,21]{Varga:2000}. On the other hand, the matrix $A_{\rm main}$ is a positive matrix, since the determinant $\det A_{\rm main}=(N+1)(N+2)>0$.
The proof is completed.
\end{proof}

From Lemma \ref{lemma4.2}, we have the following discrete maximum principle.
\begin{proposition}\label{proposition4.3}
The collocation schemes \eqref{2.17} satisfy the discrete maximum principle:
\begin{equation*}
\begin{split}
f(x)\leq 0,~ for ~  x\in\Omega\Rightarrow \max_{i\in\mathcal{N}_{in}}u_i\leq \max_{i\in\mathcal{N}_{out}}{u_i};\\
f(x)\geq 0,~for ~ x\in\Omega\Rightarrow \min_{i\in\mathcal{N}_{in}}u_i\geq \min_{i\in\mathcal{N}_{out}}{u_i}.
\end{split}
\end{equation*}
\end{proposition}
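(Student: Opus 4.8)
The plan is to derive the discrete maximum principle as a standard consequence of the structural properties of $A^S_{\widetilde{\delta},h}$ established in Lemma \ref{lemma4.2}, namely that it is a symmetric positive definite matrix with positive diagonal and nonpositive off-diagonal entries whose inverse is (entrywise) positive; in other words $A^S_{\widetilde{\delta},h}$ is a nonsingular $M$-matrix. First I would rewrite the scheme \eqref{2.17} in the block form \eqref{2.18}, isolating the contribution of the boundary values: the equations at internal/interface indices read $\eta^h_{\widetilde{\delta}}A^S_{\widetilde{\delta},h}U_{\delta,h} = F_{\delta,h} + F_{\mathcal{V},L}+F_{\mathcal{V},R}$, where the vectors $F_{\mathcal{V},L}, F_{\mathcal{V},R}$ collect the known boundary data $g_i$, $i\in\mathcal{N}_{out}$, multiplied by the (nonnegative) coefficients $a_m$, $a_{m+\frac12}$. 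Equivalently, extending $u$ by its prescribed values on $\mathcal{N}_{out}$, the identity $-\mathcal{L}^S_{\widetilde{\delta},h}u_i = f_{\delta,i}$ for $i\in\mathcal{N}_{in}$ together with \eqref{2.16} expresses $u_i$ at each interior node as a convex-type combination of neighbouring nodal values minus a multiple of $f_{\delta,i}$.

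The key steps, in order: (1) invoke Lemma \ref{lemma4.2} to get that $\big(A^S_{\widetilde{\delta},h}\big)^{-1}\ge 0$ entrywise and $A^S_{\widetilde{\delta},h}\mathbf{1} \ge 0$ after accounting for the row-sum defect at interface rows (interior rows have zero row sum by \eqref{2.16}, interface rows have strictly positive ``deficit'' absorbed into the boundary coupling $F_{\mathcal V}$); (2) suppose $f(x)\le 0$ on $\Omega$, so that $F_{\delta,h}\le 0$, and let $M^\ast=\max_{i\in\mathcal{N}_{out}}u_i$; form the vector $W = U_{\delta,h} - M^\ast\mathbf{1}$ and compute $\eta^h_{\widetilde\delta}A^S_{\widetilde{\delta},h}W = F_{\delta,h} + (\text{boundary terms}) - \eta^h_{\widetilde\delta}M^\ast A^S_{\widetilde\delta,h}\mathbf 1$; (3) show the right-hand side is $\le 0$: the $F_{\delta,h}$ part is $\le 0$ by hypothesis, and the boundary-plus-row-sum part is $\le 0$ because each interface equation has the form $u_i = \sum (\text{nonneg. weights})\,u_{\text{neighbour}} - (\text{nonneg.})f_{\delta,i}$ with weights summing to one, so replacing the boundary neighbours by $M^\ast$ only increases them; (4) apply $\big(A^S_{\widetilde{\delta},h}\big)^{-1}\ge 0$ to conclude $W\le 0$, i.e.\ $u_i\le M^\ast$ for all $i\in\mathcal{N}_{in}$, which is the first assertion. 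The second assertion ($f\ge0\Rightarrow$ minimum principle) follows by applying the first to $-u$, $-f$, $-g$.

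The main obstacle I anticipate is step (3): handling the interface rows cleanly. Unlike the purely interior rows, the interface rows of $A^S_{\widetilde{\delta},h}$ do \emph{not} have zero row sum — part of the stencil reaches into $\mathcal{N}_{out}$ — so one cannot naively use $A^S_{\widetilde\delta,h}\mathbf 1 = 0$. The careful bookkeeping is to verify that the ``missing'' positive weight at an interface node exactly equals the sum of the coefficients multiplying the boundary values that were moved into $F_{\mathcal V,L}, F_{\mathcal V,R}$; this is precisely the content of the global-zero-sum relation \eqref{2.16} applied node-by-node, and it guarantees that after subtracting $M^\ast\mathbf 1$ the boundary contribution and the row-sum defect cancel against each other up to a term with the correct (nonpositive) sign. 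Once this identity is written out explicitly using the definitions of $w_1,w_2,w_3$ and $G_1,\dots,G_6$, the monotonicity argument closes immediately. I would also note in passing that positive definiteness (hence nonsingularity) from Lemma \ref{lemma4.2} is what makes $U_{\delta,h}$ well defined, so the statement is not vacuous.
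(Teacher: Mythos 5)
Your proposal is correct and follows essentially the same route as the paper: the paper obtains Proposition \ref{proposition4.3} directly from Lemma \ref{lemma4.2} (the $M$-matrix structure, i.e.\ nonpositive off-diagonal entries, nonnegative row sums after moving the boundary data to the right-hand side, and the entrywise nonnegative inverse), and your argument is just the standard fleshed-out version of that monotonicity reasoning, with the comparison vector $W=U_{\delta,h}-M^\ast\mathbf{1}$ and the row-sum bookkeeping at interface rows handled correctly via \eqref{2.16}. No gap; the minimum-principle half by applying the result to $-u,-f,-g$ is also fine.
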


\begin{lemma}\label{lemma4.4}
Let the discrete operator $\mathcal{L}^S_{\widetilde{\delta},h}$ be defined by \eqref{2.11}. Then
\begin{eqnarray*}
\left\|\left(-\mathcal{L}^S_{\widetilde{\delta},h}\right)^{-1}\right\|_\infty\leqslant\frac{1+4\delta(1+\delta)}{8C_{\widetilde{\delta}}}
~~{\rm with}~C_{\widetilde{\delta}}>0~~{\rm in}~\eqref{2.4}.
\end{eqnarray*}
\end{lemma}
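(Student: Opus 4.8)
The plan is to bound $\|(-\mathcal{L}^S_{\widetilde{\delta},h})^{-1}\|_\infty$ by constructing an explicit supersolution (barrier function) and invoking the discrete maximum principle from Proposition \ref{proposition4.3}. Recall that $(-\mathcal{L}^S_{\widetilde{\delta},h})$ restricted to interior nodes corresponds, up to the scaling $\eta^h_{\widetilde{\delta}} = h/(2\widetilde{\delta}^3)$, to the stiffness matrix $A^S_{\widetilde{\delta},h}$ of \eqref{2.19}, which is symmetric positive definite with nonpositive off-diagonal entries and whose inverse is a positive matrix (Lemma \ref{lemma4.2}). For such an M-matrix the infinity norm of the inverse equals $\max_i (A^{-1}\mathbf{1})_i$, i.e. it suffices to exhibit a grid function $w$ with $w_i \ge 0$ on $\mathcal{N}_{out}$ (the ``boundary'') and $-\mathcal{L}^S_{\widetilde{\delta},h} w(x_i) \ge 1$ for all interior $i$; then $\|(-\mathcal{L}^S_{\widetilde{\delta},h})^{-1}\|_\infty \le \max_i w_i$.

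First I would compute the action of $\mathcal{L}^S_{\widetilde{\delta},h}$ on a quadratic function, say $w(x) = \tfrac{1}{2}x(1-x)$ or a suitable shift/scaling thereof. Using the symmetric stencil structure of \eqref{2.11}, the odd-order terms in the Taylor expansion cancel exactly, and the constant second moment condition \eqref{2.4}, $C_\delta = 1$, together with the moment identity implicit in \eqref{2.16}, should give $\mathcal{L}^S_{\widetilde{\delta},h} w(x_i) = C_{\widetilde{\delta}}\, w''(x_i) = -C_{\widetilde{\delta}}$ exactly on nodes at distance $\ge \widetilde{\delta}$ from the endpoints, since $w$ is a polynomial of degree $2$ and the piecewise-quadratic-based symmetric scheme is exact on quadratics there. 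Near the interface/boundary layer (nodes within $\widetilde{\delta} = rh$ of $0$ or $1$) the stencil reaches into $\Omega_\mathcal{I}$; there the identity $\mathcal{L}^S_{\widetilde{\delta},h} w = C_{\widetilde{\delta}} w''$ still holds because $w$ is globally a single quadratic and the shifted-symmetric stencil integrates quadratics exactly against the constant kernel over the full ball. Hence $-\mathcal{L}^S_{\widetilde{\delta},h} w(x_i) = C_{\widetilde{\delta}}$ for every interior node, and $w \ge 0$ on $[-\delta, 1+\delta] \supset \mathcal{N}_{out}$ only fails mildly at the outer layer where $w$ is slightly negative — this is where the correction term $4\delta(1+\delta)$ will enter. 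So I would instead take $w(x) = \tfrac{1}{2}\big(\delta + x\big)\big(1+\delta - x\big)/C_{\widetilde{\delta}}$, which is nonnegative on all of $\Omega \cup \Omega_\mathcal{I}$, still satisfies $w'' = -1/C_{\widetilde{\delta}}$, hence $-\mathcal{L}^S_{\widetilde{\delta},h} w \equiv 1$ on interior nodes, and whose maximum over the grid is $w(\tfrac{1+\delta-\delta}{2}) \cdot$ (evaluate): $\max w = \tfrac{(1+2\delta)^2}{8 C_{\widetilde{\delta}}}$. Comparing $(1+2\delta)^2 = 1 + 4\delta + 4\delta^2 = 1 + 4\delta(1+\delta)$ recovers exactly the claimed bound $\dfrac{1+4\delta(1+\delta)}{8 C_{\widetilde{\delta}}}$.

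To finish, I would apply the discrete maximum principle: for any right-hand side $F$ with $\|F\|_\infty \le 1$, the solution $u$ of $-\mathcal{L}^S_{\widetilde{\delta},h} u = F$ with zero volume constraint satisfies $-\mathcal{L}^S_{\widetilde{\delta},h}(w \pm u) = 1 \pm F \ge 0$ on interior nodes and $w \pm u = w \ge 0$ on $\mathcal{N}_{out}$, so Proposition \ref{proposition4.3} gives $|u_i| \le w_i \le \max_j w_j = \dfrac{1+4\delta(1+\delta)}{8C_{\widetilde{\delta}}}$ for all $i$, which is precisely the asserted operator-norm estimate. The main obstacle I anticipate is the boundary-layer bookkeeping in the middle step: verifying that the shifted-symmetric stencil \eqref{2.11} truly reproduces $C_{\widetilde{\delta}} w''$ \emph{exactly} for a quadratic $w$ even at nodes whose stencil straddles the interface $x_r$ (where the basis changes from interior to one-sided), using the coefficient values \eqref{2.200000} and the summation identity \eqref{2.16}; this requires checking the zeroth, first, and second discrete moments of the stencil weights match $0$, $0$, and $C_{\widetilde{\delta}}$ respectively, which is a finite but slightly delicate computation with the half-integer nodes.
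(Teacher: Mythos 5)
Your proposal is correct and takes essentially the same route as the paper: your barrier $w(x)=(\delta+x)(1+\delta-x)/(2C_{\widetilde{\delta}})$ is algebraically identical to the paper's $v_\delta(x)=\frac{x(1-x)+\delta(1+\delta)}{2C_{\widetilde{\delta}}}$, the exactness of the shifted-symmetric stencil on quadratics is verified there by the same moment computation you outline (via \eqref{2.16} and $\eta^{h}_{\widetilde{\delta}}h^{2}\left(3r^{2}-r+\sum_{m=1}^{r-1}6m^{2}\right)=1$), and the bound is then concluded through the discrete maximum principle just as you propose. The interface bookkeeping you flag is harmless, since the stencil \eqref{2.11} uses the same coefficients \eqref{2.200000} at every collocation point.
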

\begin{proof}
The proof is based on the idea of \cite{Zhang:52--68}.
Let $v_{\delta}(x)=\frac{x(1-x)+\delta(1+\delta)}{2C_{\widetilde{\delta}}}\geq 0$. From \eqref{2.6}, we have
\begin{equation*}
-\mathcal{L}_{\widetilde{\delta}} v_\delta(x)  =-\int_{0}^{\widetilde{\delta}}\gamma_{\widetilde{\delta}}(z )\left[v_\delta(x+z)-2v_\delta(x)+v_\delta(x-z)\right]dz
=\frac{1}{C_{\widetilde{\delta}}}\int_{0}^{\widetilde{\delta }} z^2\gamma_{\widetilde{\delta}}(z)dz=1.
\end{equation*}
Let
$$v_{\delta,h}(x)=\left[v_{\delta}(x_1),v_{\delta}(x_2),\cdots,v_{\delta}(x_N),v_{\delta}(x_{\frac{1}{2}}),v_{\delta}(x_{\frac{3}{2}}),\cdots,v_{\delta}(x_{N+\frac{1}{2}})  \right]^T,$$
and
$$g_{\delta}(x)=\left[1,1,\cdots,1,1,1,\cdots,1  \right]^T.$$
It yields
\begin{equation*}
-\mathcal{L}_{\widetilde{\delta}}v_{\delta,h}(x)=g_{\delta}(x).
\end{equation*}

On the other hand, using \eqref{2.11}, \eqref{2.200000}, \eqref{2.16} and Taylor  expansions, there exists
\begin{equation}\label{ada4.2}
\begin{split}
-\mathcal{L}_{\widetilde{\delta},h}^S v_{\delta}\left(x_\frac{i}{2}\right)
&=-\eta^h_{\widetilde{\delta}} \left[\sum_{m=-r}^{r}{a_m}u(x_{\frac{i}{2}+m})+\sum_{m=-r}^{r-1}{a_{m+\frac{1}{2}}}u(x_{\frac{i}{2}+m+\frac{1}{2}})\right]\\
&=\eta^h_{\widetilde{\delta}} h^2\left(\sum_{m=1}^{r}a_{m}m^2+\sum_{m=0}^{r-1}a_{m+\frac{1}{2}}\left(m+\frac{1}{2}\right)^2\right)\\
&=\eta^h_{\widetilde{\delta}} h^2\left(3r^2-r+\sum_{m=1}^{r-1}6m^2 \right)=1.\\
\end{split}
\end{equation}

According to the above equations, we get
\begin{equation*}
-\mathcal{L}^S_{\widetilde{\delta},h}v_{\delta,h}(x)=-\mathcal{L}_{\widetilde{\delta}}v_{\delta,h}(x)=g_{\delta}(x).
\end{equation*}

Using Lemmas \ref{lemma4.1}, \ref{lemma4.2} and the discrete maximum principle in Proposition \ref{proposition4.3} with $v_{\delta}(x_j)\geqslant0$ for all $j\in\mathcal{N}_{out}$, we obtain
\begin{equation*}
\left\|\left(-\mathcal{L}^{S}_{{\widetilde{\delta}},h}\right)^{-1}\right\|_\infty=\left\|\left(-\mathcal{L}^{S}_{\widetilde{\delta},h}\right)^{-1}g_{\delta}\right\|_\infty=\|v_{\delta,h}\|_\infty
\leq \|v_{\delta}\|_\infty \leqslant\frac{1+4\delta(1+\delta)}{8C_{\widetilde{\delta}}}.
\end{equation*}
The proof is completed.
\end{proof}

\subsection{Error analysis for symmetric systems \eqref{2.17}}
From Lemma \ref{lemma4.4} and stability definition in \cite[p.19]{LeVeque:2007}, the stability of the discrete scheme \eqref{2.18} can be established immediately.
We now show the convergence behavior and error estimates with the general horizon parameter $\delta$.
\begin{theorem}\label{theorem4.5}
Let $\delta=\mathcal{O}\left(h^\beta\right)$, $\beta\geq 0$.
Let $u_{\delta,h}(x_{i})$ be the approximate solution of $u_{\delta}(x_{i})$ computed by the discretization scheme \eqref{2.18}. Then
\begin{equation*}
\left\|u_{\delta,h}(x_i)-u_\delta(x_i)\right\|_\infty
=\left\{\begin{split}
\mathcal{O}\left(h^{\min\{2,1+\beta\}}\right)    ~             &  ~~ {\rm if}~ \delta  {\rm ~is ~not~ set~as~ the ~ grid~ point},\\
\mathcal{O}\left(h^{\max\left\{2,4-2\beta\right\}}\right)      &  ~~ {\rm if}~ \delta  {\rm ~is~ set~as~  ~ the ~ grid~ point}.
\end{split}\right.
\end{equation*}
\end{theorem}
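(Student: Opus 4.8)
The plan is to combine the stability estimate from Lemma~\ref{lemma4.4} with the consistency (local truncation) estimate from Theorem~\ref{theorem3.13} in the classical way: stability plus consistency gives convergence. First I would set up the error equation. Let $e_{\delta,h}(x_i) = u_{\delta,h}(x_i) - u_\delta(x_i)$ for $i\in\mathcal{N}$. On the boundary index set $\mathcal{N}_{out}$ we have $e_{\delta,h}(x_i)=0$ because both the scheme~\eqref{2.18} and the exact solution satisfy the volume constraint $u(x_i)=g_{\delta,i}$ there. On the interior index set $\mathcal{N}_{in}$, subtracting $-\mathcal{L}^S_{\widetilde\delta,h}u_{\delta,h}(x_i)=f_{\delta,i}$ from the identity $-\mathcal{L}^S_{\widetilde\delta,h}u_\delta(x_i) = -\mathcal{L}_\delta u_\delta(x_i) + \big(\mathcal{L}_\delta u_\delta(x_i) - \mathcal{L}^S_{\widetilde\delta,h}u_\delta(x_i)\big) = f_{\delta,i} + \big(\mathcal{L}_\delta u_\delta(x_i) - \mathcal{L}^S_{\widetilde\delta,h}u_\delta(x_i)\big)$, I obtain
\begin{equation*}
-\mathcal{L}^S_{\widetilde\delta,h} e_{\delta,h}(x_i) = \mathcal{L}_\delta u_\delta(x_i) - \mathcal{L}^S_{\widetilde\delta,h}u_\delta(x_i), \qquad i\in\mathcal{N}_{in},
\end{equation*}
whose right-hand side is exactly the quantity $R^S_{i/2}$ bounded in Theorem~\ref{theorem3.13}.

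Next I would apply the stability bound. Since $e_{\delta,h}$ vanishes on $\mathcal{N}_{out}$, the homogeneous-boundary form of the discrete operator applies, and Lemma~\ref{lemma4.4} gives
\begin{equation*}
\|e_{\delta,h}\|_\infty \le \left\|\left(-\mathcal{L}^S_{\widetilde\delta,h}\right)^{-1}\right\|_\infty \, \max_{i\in\mathcal{N}_{in}} R^S_{i/2} \le \frac{1+4\delta(1+\delta)}{8C_{\widetilde\delta}}\, \max_{i\in\mathcal{N}_{in}} R^S_{i/2}.
\end{equation*}
With the constant kernel~\eqref{2.3} we have $C_{\widetilde\delta}=1$ by~\eqref{2.4}, so the stability prefactor is $\tfrac{1}{8}\big(1+4\delta(1+\delta)\big)$, which stays bounded (indeed tends to $\tfrac18$) as $\delta\to 0$ and is $\mathcal{O}(1)$ for any fixed $\delta$; in all cases it contributes no extra powers of $h$. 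Then I simply substitute the two branches of Theorem~\ref{theorem3.13}: when $\delta$ is not a grid point, $\max_i R^S_{i/2}=\mathcal{O}(h^{\min\{2,1+\beta\}})$, giving the first case; when $\delta$ is a grid point (so $\widetilde\delta=\delta$, $r_0=0$), $\max_i R^S_{i/2}=\mathcal{O}(h^{\max\{2,4-2\beta\}})$, giving the second case.

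One point that needs care, and which I expect to be the main obstacle, is the reduction of the error equation to a form to which Lemma~\ref{lemma4.4} literally applies: the scheme~\eqref{2.18} has the volume-constraint contributions folded into the right-hand side vectors $F_{\mathcal{V},L}, F_{\mathcal{V},R}$, so I must verify that $A^S_{\widetilde\delta,h}(U_{\delta,h}-\widehat U_\delta) = F^S_{\delta,h} - A^S_{\widetilde\delta,h}\widehat U_\delta$, where $\widehat U_\delta$ is the vector of exact interior values, has right-hand side equal to the truncation-error vector with \emph{no} leftover boundary terms — i.e.\ that the $F_{\mathcal V}$ terms exactly encode $-\mathcal{L}^S_{\widetilde\delta,h}$ acting on the known boundary data. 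This is a bookkeeping check on the block structure~\eqref{2.19} and the definitions of $G_1,\dots,G_6$, $w_1,w_2,w_3$, but it is the step where an off-by-one or a missed interface contribution would break the argument. A secondary, lighter point is that $R^S_{i/2}$ in Theorem~\ref{theorem3.13} is stated at points $x_{i/2}$ ranging over $\mathcal{N}_{in}$, so I should note that the maximum there is over precisely the interior collocation points appearing in the error equation, so no mismatch of index sets occurs. Once these checks are in place, the convergence rates follow immediately from the displayed inequality.
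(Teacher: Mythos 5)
Your proposal is correct and follows essentially the same route as the paper: subtract the exact-solution identity $-\mathcal{L}^{S}_{\widetilde{\delta},h}u_{\delta}(x_i)=f_{\delta}(x_i)+R^{S}_{i}$ from the scheme, then combine the stability bound of Lemma~\ref{lemma4.4} with the consistency estimate of Theorem~\ref{theorem3.13}. Your extra bookkeeping remarks (vanishing boundary error and the role of $F_{\mathcal{V},L},F_{\mathcal{V},R}$) are details the paper leaves implicit, but they do not change the argument.
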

\begin{proof}
From theorem \ref{theorem3.13}, we can rewrite \eqref{2.1} as
$$-\mathcal{L}^{S}_{\widetilde{\delta},h}u_{\delta}(x_{i})=f_{\delta}(x_i)+R_{i}^{S}.$$
Subtracting \eqref{2.1} from \eqref{2.17}, it yields
\begin{eqnarray*}
-\mathcal{L}^{S}_{\widetilde{\delta},h}[u_{\delta,h}(x_{i})-u_\delta(x_{i})]=R_{i}^{S}.
\end{eqnarray*}
Thus, we have
 \begin{equation*}
\left\|u_{\delta,h}(x_{i})-u_\delta(x_{i})\right\|_\infty
\leqslant \left\|\left(-\mathcal{L}^{S}_{\widetilde{\delta},h}\right)^{-1}\right\|_\infty \left\|R_{i}^{S}\right\|_\infty.
\end{equation*}
According to Lemma \ref{lemma4.4} and Theorems \ref{theorem3.13}, the desired results are obtained.
\end{proof}

\subsection{Error analysis of AC scheme}
To connect the nonlocal problem \eqref{2.1} with its local limit, we also require that
\begin{equation}\label{4.2}
C_\delta\rightarrow C_0,~~f_\delta\rightarrow f_0,~~{\rm as}~~\delta\rightarrow 0.
\end{equation}
The solution of nonlocal problems \eqref{2.1} converges to the solution of the two-point boundary value problem \cite{Gunzburger:676--696} as  $\delta\rightarrow0$, namely,
\begin{equation}\label{4.3}
\left\{ \begin{split}
-C_0u_0''(x)=f_0(x),    \quad   &~in~ \Omega  \\
u_0(x)=g(x),      ~    \quad   &~on~ \partial \Omega=\{0\}\cup\{1\}
\end{split}\right.
\end{equation}
which is the classic diffusion problem.

Let us study the asymptotic compatibility of the collocation scheme \eqref{2.17}, i.e.,
\begin{equation}\label{4.4}
   -\mathcal{L}^S_{\widetilde{\delta},h} u_{\delta,h}(x_i) =f_0(x_i),~~{\rm as}~~\delta\rightarrow 0.
\end{equation}
\newtheorem{Definition}{Definition}
\begin{Definition} \cite{Tian:1641--1665,Zhang:52--68}
A family of convergent approximations $\{u_{\delta,h}\}$ defined by \eqref{4.4} is said to be asymptotically compatible to the solution $u_{0}$ defined by \eqref{4.3} if both $\delta\rightarrow0$ and $h\rightarrow0$, we have $u_{\delta,h}\rightarrow u_0$.
\end{Definition}
\begin{lemma}\label{lemma4.6}
Let $\delta\leq h$ with $\delta,h\rightarrow 0$.
Let $u_{\delta,h}$ and $u_0$ be the solution of \eqref{4.4} and \eqref{4.3}, respectively. Then it holds that
\begin{equation*}
\left\|u_{\delta,h}-u_0\right\|_\infty=\mathcal{O}\left(h^2\right)~~{\rm as}~~\delta,h\rightarrow 0.
\end{equation*}
\end{lemma}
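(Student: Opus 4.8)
The plan is to reduce the assertion to a perturbation estimate between the discrete nonlocal operator $\mathcal{L}^S_{\widetilde{\delta},h}$ and the continuous local operator $C_0 \frac{d^2}{dx^2}$, and then to invoke the uniform stability bound from Lemma~\ref{lemma4.4}. First I would observe that under the regime $\delta \le h$ (so that $\widetilde{\delta}=h$ and $r=1$), the discrete operator $\mathcal{L}^S_{\widetilde{\delta},h}$ has the explicit stencil coefficients from \eqref{2.22222}, namely $a_0=-10$, $a_1=1$, $a_{\frac{1}{2}}=4$, and $\eta^h_{\widetilde{\delta}}=\frac{1}{2h^2}$. A direct Taylor expansion of $-\mathcal{L}^S_{\widetilde{\delta},h} u_0(x_{\frac{i}{2}})$ about $x_{\frac{i}{2}}$ (as already carried out in \eqref{ada4.2} to check the second-moment normalization) gives $-\mathcal{L}^S_{\widetilde{\delta},h} u_0(x_{\frac{i}{2}}) = -C_{\widetilde{\delta}} u_0''(x_{\frac{i}{2}}) + \mathcal{O}(h^2)\max_{\xi\in\Omega}|u_0^{(4)}(\xi)|$, because the weighted zeroth and second moments of the stencil reproduce constants and the Laplacian exactly, and the fourth-moment term scales like $h^2$.

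Next I would combine this with the continuous consistency relation \eqref{4.2}, writing $f_0(x_{\frac{i}{2}}) = -C_0 u_0''(x_{\frac{i}{2}})$ from \eqref{4.3}, and with the hypothesis $C_\delta \to C_0$, $f_\delta \to f_0$ as $\delta \to 0$; since $C_\delta = 1$ for the constant kernel \eqref{2.3}–\eqref{2.4} this is automatic. Thus the local truncation error of the scheme \eqref{4.4} applied to the exact local solution $u_0$ is
\begin{equation*}
\left| -\mathcal{L}^S_{\widetilde{\delta},h} u_0(x_{\frac{i}{2}}) - f_0(x_{\frac{i}{2}}) \right| = \mathcal{O}(h^2),
\end{equation*}
uniformly in $i$. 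Subtracting \eqref{4.4} from this identity yields $-\mathcal{L}^S_{\widetilde{\delta},h}\big(u_0(x_i) - u_{\delta,h}(x_i)\big) = \mathcal{O}(h^2)$ at every interior node, while the two functions agree on $\mathcal{N}_{out}$ up to the boundary data discrepancy $g_\delta - g$, which also vanishes as $\delta\to0$ (and is $\mathcal{O}(h^2)$ or better under the smoothness assumption). Applying $\|(-\mathcal{L}^S_{\widetilde{\delta},h})^{-1}\|_\infty \le \frac{1+4\delta(1+\delta)}{8C_{\widetilde{\delta}}} = \mathcal{O}(1)$ from Lemma~\ref{lemma4.4} then gives $\|u_{\delta,h} - u_0\|_\infty = \mathcal{O}(h^2)$.

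The main obstacle I anticipate is handling the boundary/volume-constraint layer carefully: the stability bound in Lemma~\ref{lemma4.4} controls the inverse of the interior operator, but one must be sure that the contribution of the mismatch between the nonlocal volume constraint $g_\delta$ on $\Omega_\mathcal{I}=[-\delta,0]\cup[1,1+\delta]$ and the local boundary condition $g$ at $\{0\}\cup\{1\}$ does not spoil the $\mathcal{O}(h^2)$ rate — this requires that $g_\delta\to g$ with the right rate and that the extension/restriction operators interact correctly with the stencil near $i=1$ and $i=N+\frac12$. A secondary technical point is that, strictly speaking, the Taylor argument needs $u_0\in C^4(\overline\Omega)$, which one should either assume or note follows from \eqref{4.3} together with sufficient regularity of $f_0$; once that regularity is in place the remaining estimates are the routine computations sketched above.
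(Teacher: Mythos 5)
Your proposal is correct and follows essentially the same route as the paper: Taylor-expand the reduced stencil \eqref{2.22222} with $\eta^h_{\widetilde{\delta}}=\frac{1}{2h^2}$ to get $\mathcal{L}^S_{\widetilde{\delta},h}u_0(x_i)=u_0''(x_i)+\mathcal{O}(h^2)$, identify $C_\delta=C_0=1$, and then apply the uniform stability bound of Lemma \ref{lemma4.4} to $\left\|\mathcal{L}^S_{\widetilde{\delta},h}u_0-u_0''\right\|_\infty$. Your additional remarks on the boundary-layer mismatch and the $C^4$ regularity requirement are sensible cautions but do not change the argument, which matches the paper's proof.
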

\begin{proof}
According to \eqref{2.11}, \eqref{2.22222} and Taylors series expansion, it yields
\begin{equation*}
\begin{split}
\mathcal{L}^S_{\widetilde{\delta},h}u_0(x_i)
&=\eta^h_{\widetilde{\delta}}\left[u_0\left(x_{i-1}\right)+u_0\left(x_{i+1}\right)-10u_{0}\left(x_i\right)+4\left(u_0\left(x_{i-\frac{1}{2}}\right)+u_0\left(x_{i+\frac{1}{2}}\right)\right)\right]\\
&=u_0''(x_i)+\left(\sum_{l=1}^{\infty}\frac{h^{2l}}{(2l+2)!}+\sum_{l=1}^{\infty}\frac{(\frac{h}{2})^{2l}}{(2l+2)!}\right)u_0^{(2l+2)}(x_i)\\
&=u_0''(x_i)+\mathcal{O}\left(h^2\right)\rightarrow C_0u_0''(x_i)~~{\rm as}~~\delta,h\rightarrow 0,
\end{split}
\end{equation*}
with $\eta^h_{\widetilde{\delta}}=\frac{h}{2\widetilde{\delta}^{3}}=\frac{1}{2h^{2}}$ and $C_\delta= C_0=1$. From \eqref{4.3}, \eqref{4.4} and Lemma \ref{lemma4.4}, we have
\begin{eqnarray*}
\left\|u_{\delta,h}-u_0\right\|_\infty \leqslant \left\|\left(-\mathcal{L}^S_{\widetilde{\delta},h}\right)^{-1}\right\|_\infty \left\|f_0+\mathcal{L}^S_{\widetilde{\delta},h}u_0\right\|_\infty
\leqslant \left\| \left(-\mathcal{L}^S_{\widetilde{\delta},h}\right)^{-1}\right\|_\infty \left\|\mathcal{L}^S_{\widetilde{\delta},h}u_0-u_0{''}\right\|_\infty.
\end{eqnarray*}
The proof is completed.
\end{proof}

\begin{lemma}\label{lemma4.7}
Let $\delta\geq h$ with $\delta,h\rightarrow 0$.
Let $u_{\delta,h}$ and $u_0$ be the solution of \eqref{4.4} and \eqref{4.3}, respectively. Then  it holds that
\begin{equation*}
\left\|u_{\delta,h}-u_{0} \right\|_\infty=\mathcal{O}\left(\delta^2\right)~~{\rm as}~~\delta,h\rightarrow 0.
\end{equation*}
\end{lemma}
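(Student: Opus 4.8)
\textbf{Proof proposal for Lemma~\ref{lemma4.7}.}
The plan is to mirror the structure of the proof of Lemma~\ref{lemma4.6}, but now exploiting the fact that $\delta \geq h$ forces $\widetilde{\delta} = rh$ with $r = \lfloor \delta/h \rfloor \geq 1$, so the consistency error is governed by the interaction radius $\delta$ rather than by $h$. First I would observe that, since the collocation scheme \eqref{4.4} computes with the operator $\mathcal{L}^S_{\widetilde{\delta},h}$, I must control the combined error coming from three sources: (i) the replacement of $\mathcal{L}_\delta$ by $\mathcal{L}_{\widetilde{\delta}}$ (a horizon perturbation of size $\mathcal{O}(h)$ relative to $\delta$, but this is subsumed since $h \leq \delta$), (ii) the quadrature error $\mathcal{L}_{\widetilde{\delta}} - \mathcal{L}_{\widetilde{\delta},h}$ and the shift error $\mathcal{L}_{\widetilde{\delta},h} - \mathcal{L}^S_{\widetilde{\delta},h}$, both of which are $\mathcal{O}(h^4)\widetilde{\delta}^{-2} = \mathcal{O}(h^4\delta^{-2})$ by Lemmas~\ref{lemma3.9} and \ref{theorem3.12222}, and (iii) the modeling error $\mathcal{L}_{\widetilde{\delta}} u_0(x_i) - C_0 u_0''(x_i) = \mathcal{O}(\widetilde{\delta}^2) = \mathcal{O}(\delta^2)$ coming from \eqref{2.5}. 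Adding these, the total truncation error of the scheme against $u_0$ is $\mathcal{O}(\delta^2) + \mathcal{O}(h^4\delta^{-2})$, and since $h \leq \delta$ gives $h^4\delta^{-2} \leq \delta^2$, the truncation error is $\mathcal{O}(\delta^2)$.

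Next I would combine this with the uniform stability bound from Lemma~\ref{lemma4.4}: because $\delta, h \to 0$, the factor $\frac{1 + 4\delta(1+\delta)}{8C_{\widetilde{\delta}}}$ stays bounded (indeed tends to $\tfrac{1}{8C_0}$), so $\left\| \left(-\mathcal{L}^S_{\widetilde{\delta},h}\right)^{-1} \right\|_\infty = \mathcal{O}(1)$. Writing the error equation exactly as in Lemma~\ref{lemma4.6},
\begin{equation*}
\left\| u_{\delta,h} - u_0 \right\|_\infty
\leqslant \left\| \left(-\mathcal{L}^S_{\widetilde{\delta},h}\right)^{-1} \right\|_\infty
\left\| f_0 + \mathcal{L}^S_{\widetilde{\delta},h} u_0 \right\|_\infty
= \mathcal{O}(1)\cdot \mathcal{O}(\delta^2) = \mathcal{O}(\delta^2),
\end{equation*}
where I have used $-C_0 u_0'' = f_0$ and the truncation estimate of the previous paragraph. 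This yields the claimed bound.

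The main obstacle I anticipate is the careful bookkeeping in estimating $\left\| f_0 + \mathcal{L}^S_{\widetilde{\delta},h} u_0 \right\|_\infty$, specifically verifying that the moment conditions \eqref{2.16} and the second-moment normalization $C_{\widetilde{\delta}} = 1$ kill the lower-order Taylor terms so that the leading surviving term is genuinely $\mathcal{O}(\widetilde{\delta}^2) = \mathcal{O}(\delta^2)$ and not $\mathcal{O}(1)$ — this is where the argument differs from Lemma~\ref{lemma4.6}, in which $\widetilde{\delta} = h$ collapsed everything to powers of $h$. One must also be slightly careful that Lemma~\ref{theorem3.12222}'s bound $\mathcal{O}(h^4)\widetilde{\delta}^{-2}$ really is dominated by $\delta^2$ uniformly in the regime $h \leq \delta \to 0$, which follows from $h^4 \widetilde{\delta}^{-2} \leq h^4 h^{-2} = h^2 \leq \delta^2$; I would state this comparison explicitly. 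Otherwise the proof is a routine assembly of the local truncation estimates of Section~\ref{sec:into}'s Section~3 together with the stability lemma, and I would keep it short by citing Lemma~\ref{lemma4.6} for the shared computations.
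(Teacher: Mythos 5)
Your argument is correct and shares the paper's overall skeleton (consistency of $\mathcal{L}^S_{\widetilde{\delta},h}u_0$ against $u_0''$ followed by the stability bound of Lemma \ref{lemma4.4}), but the consistency estimate is obtained by a genuinely different route. You chain three already-proved ingredients: the quadrature error of Lemma \ref{lemma3.9} (legitimately applicable here since $\widetilde{\delta}=rh$ is a grid multiple and $r\ge1$), the shift error of Lemma \ref{theorem3.12222}, both of size $\mathcal{O}\left(h^4\right)\widetilde{\delta}^{-2}$, and the continuum expansion \eqref{2.5} with the normalization $C_{\widetilde{\delta}}=1$ from \eqref{2.4}, and then absorb $h^4\widetilde{\delta}^{-2}\le h^2\le\delta^2$ using $h\le\widetilde{\delta}\le\delta$. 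The paper instead never passes through $\mathcal{L}_{\widetilde{\delta}}$: it Taylor-expands the shifted discrete operator directly as in \eqref{4.5555}, kills the zeroth-order term by the moment identity \eqref{2.16}, proves the exact identity $C_0^r(h)=C_\delta=1$ by applying both the continuum and discrete operators to $u_0(z)=z^2$ (via \eqref{ada4.2}), and bounds every higher coefficient by $\left|C_l^r(h)\right|\le 2\delta^{2l}C_0^r(h)/(2l+2)!$. What each buys: your version is shorter and maximally reuses Section~3; the paper's version is self-contained at the discrete level and exhibits the exact cancellation of the leading coefficient (rather than inferring it from \eqref{2.5}) together with explicit control of all higher-order terms, which is what makes the $\mathcal{O}\left(\delta^2\right)$ constant transparent. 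The regularity demands are comparable (both need high-order derivatives of $u_0$), so you lose nothing there. One small cleanup: your item (i), comparing $\mathcal{L}_\delta$ with $\mathcal{L}_{\widetilde{\delta}}$, plays no role in this lemma, because the AC scheme \eqref{4.4} is measured against the local operator $C_0\partial_{xx}$ and the right-hand side $f_0$, not against $\mathcal{L}_\delta$; it should simply be dropped rather than described as ``subsumed.''
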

\begin{proof}
From \eqref{2.11}, \eqref{2.200000}, \eqref{2.16} and the Taylors series expansion, it yields
\begin{equation}\label{4.5555}
\begin{split}
\mathcal{L}^S_{\widetilde{\delta},h}u_0(x_i)
=&\sum_{l=0}^{\infty}C_l^r(h)u_0^{(2l+2)}(x_i)+\eta^h_{\widetilde{\delta}}\left(2\sum_{m=1}^{r}a_m+2\sum_{m=0}^{r-1}a_{m+\frac{1}{2}}+a_0\right)u(x_i)\\
=&\sum_{l=0}^{\infty}C_l^r(h)u_0^{(2l+2)}(x_i)
=C_0^r(h)u_0''(x_i)+\sum_{l=1}^{\infty}C_l^r(h)u_0^{(2l+2)}(x_i)
\end{split}
\end{equation}
with
\begin{equation*}
C_l^r(h)=2\eta^h_{\widetilde{\delta}}\sum_{m=1}^{r}a_m\frac{(mh)^{2l+2}}{(2l+2)!}+2\eta^h_{\widetilde{\delta}}\sum_{m=0}^{r-1}a_{m+\frac{1}{2}}\frac{\left[\left(m+\frac{1}{2}\right)h\right]^{2l+2}}{(2l+2)!}.
\end{equation*}

We next prove $C_0^r(h)=C_\delta$ in \eqref{4.2}. Choosing $u_0(z)=z^2$ with $x=0$ in \eqref{2.6}, we get
\begin{eqnarray*}
\mathcal{L}_{\widetilde{\delta}} u_0(0)=\mathcal{L}_{\widetilde{\delta}} u_0(x) \big|_{x=0}=2\int_{0}^{\widetilde{\delta}} z^2\gamma_{\widetilde{\delta}}(z)dz=2C_{\widetilde{\delta}}
=2\int_{0}^{{\delta}} z^2\gamma_{{\delta}}(z)dz=2C_{{\delta}}=2.
\end{eqnarray*}
Similarly, taking  $x_i=0$ in \eqref{2.11} and using \eqref{ada4.2}, it leads to
 \begin{eqnarray*}
\mathcal{L}^S_{\widetilde{\delta},h} u_0(0)
=2\eta^h_{\widetilde{\delta}}\sum_{m=1}^{r}a_m(mh)^2+2\eta^h_{\widetilde{\delta}}\sum_{m=0}^{r-1}a_{m+\frac{1}{2}}\left[\left(m+\frac{1}{2}\right)h\right]^2\!\!\!=2C_0^r(h)=2.
\end{eqnarray*}
Hence, we have
$$C_0^r(h)=C_\delta=1.$$

On the other hand, there exists
\begin{eqnarray*}
\begin{split}
\left| C_l^r(h)\right|
\leq &\frac{2}{(2l+2)!}\left\{\eta^h_{\widetilde{\delta}}\sum_{m=1}^{r}a_m(rh)^{2l}(mh)^2+\eta^h_{\widetilde{\delta}}\sum_{m=0}^{r-1}a_{m+\frac{1}{2}}(rh)^{2l}\left[\left(m+\frac{1}{2}\right)h\right]^{2}\right\}\\
=  &\frac{2(rh)^{2l}}{(2l+2)!}C_0^r(h)\leq \frac{2\delta^{2l}}{(2l+2)!}C_0^r(h),
\end{split}
\end{eqnarray*}
which  yields
\begin{eqnarray*}
\sum_{l=1}^\infty C_l^r(h)u_0^{(2l+2)}(x_i)=\mathcal{O}\left(\delta^2\right)~~{\rm as}~~ \delta,h\rightarrow 0.
\end{eqnarray*}

From the above equations, we can rewrite \eqref{4.5555} as
\begin{equation*}
\mathcal{L}^S_{\widetilde{\delta},h}u_0(x_i)=u_0''(x_i)+\mathcal{O}\left(\delta^2\right)\rightarrow C_0u_0''(x_i)~~{\rm as}~~\delta,h\rightarrow 0
\end{equation*}
with $C_\delta=C_0=1$. Using \eqref{4.3}, \eqref{4.4} and Lemma \ref{lemma4.4}, we obtain
\begin{eqnarray*}
\left\|u_{\delta,h}-u_0\right\|_\infty\leq  \left\|\left(-\mathcal{L}^S_{\widetilde{\delta},h}\right)^{-1}\right\|_\infty \left\|f+\mathcal{L}^S_{\widetilde{\delta},h}u_0\right\|_\infty
\leq \left\| \left(-\mathcal{L}^S_{\widetilde{\delta},h}\right)^{-1}\right\|_\infty \left\|\mathcal{L}^S_{\widetilde{\delta},h}u_0-u_0{''}\right\|_\infty.
\end{eqnarray*}
The proof is completed.
\end{proof}
\begin{theorem}\label{theorem4.8}
Let $u_{\delta}(x) \in C^{4}(\Omega)$ with $\delta=\mathcal{O}\left(h^\beta\right)$, $\beta\geq 0$.
Let $u_{\delta,h}$ and $u_0$ be the solution of \eqref{4.4} and \eqref{4.3}, respectively. Then it holds that
\begin{equation*}
\left\|u_{\delta,h}-u_0\right\|_\infty =\mathcal{O}\left(h^{\min\left\{2,2\beta\right\}}\right),~~\beta\geq 0.~~{\rm as}~~\delta,h\rightarrow 0.
\end{equation*}
\end{theorem}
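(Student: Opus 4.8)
The plan is to reduce Theorem~\ref{theorem4.8} to the two regime-specific estimates already established in Lemmas~\ref{lemma4.6} and \ref{lemma4.7}, via a case split on the exponent $\beta$. Writing $\delta=ch^{\beta}$ with $c>0$ as in the proof of Lemma~\ref{theorem3.11}, the key observation is that the dichotomy $\delta\le h$ versus $\delta\ge h$ used to define $\widetilde{\delta}$ in \eqref{2.7} corresponds, for all sufficiently small $h$, to $\beta\ge 1$ versus $\beta\le 1$; at the threshold $\beta=1$ the sign of $c-1$ selects the branch, but both branches will be seen to yield the same order, so the value of $c$ is immaterial there.

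First I would treat the case $\beta\ge 1$. Then $\delta=ch^{\beta}\le ch\le h$ once $h$ is small (the sub-case $\beta=1$, $c>1$, where $\delta\ge h$, is covered by the next paragraph). Lemma~\ref{lemma4.6} applies and gives $\|u_{\delta,h}-u_{0}\|_{\infty}=\mathcal{O}(h^{2})$ as $\delta,h\to 0$. Since $\min\{2,2\beta\}=2$ for $\beta\ge 1$, this is precisely the asserted bound.

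Next I would treat the case $0\le\beta\le 1$. Then $\delta=ch^{\beta}\ge ch$, hence $\delta\ge h$ for small $h$, and Lemma~\ref{lemma4.7} yields $\|u_{\delta,h}-u_{0}\|_{\infty}=\mathcal{O}(\delta^{2})=\mathcal{O}(h^{2\beta})$. Because $\min\{2,2\beta\}=2\beta$ for $\beta\le 1$, this again matches the claim, and at $\beta=1$ both Lemmas~\ref{lemma4.6} and \ref{lemma4.7} collapse to $\mathcal{O}(h^{2})$, so the two branches are consistent. Combining the two cases gives $\|u_{\delta,h}-u_{0}\|_{\infty}=\mathcal{O}(h^{\min\{2,2\beta\}})$ for all $\beta\ge 0$.

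The argument carries no genuine analytic difficulty once Lemmas~\ref{lemma4.6}--\ref{lemma4.7} are available; the points that need care are (i) verifying that the geometric condition ``$\delta$ below or above $h$'' is faithfully translated into the algebraic condition on $\beta$ uniformly as $h\to0$, and (ii) confirming that the hypotheses of those lemmas---the regularity $u_{\delta}\in C^{4}(\Omega)$, which in turn supplies enough smoothness of $u_{0}$ to justify the Taylor expansions of $\mathcal{L}^{S}_{\widetilde{\delta},h}u_{0}$ used there, together with the limiting consistency \eqref{4.2}---remain in force throughout, which holds by assumption. I expect the threshold $\beta=1$ to be the only place a reader might hesitate, and the observation that both lemmas reduce to $\mathcal{O}(h^{2})$ there removes that concern.
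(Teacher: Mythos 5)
Your proposal is correct and follows exactly the paper's route: the paper's own proof is the one-line combination of Lemmas \ref{lemma4.6} and \ref{lemma4.7}, and you merely make explicit the case split $\beta\le 1$ versus $\beta\ge 1$ (with the harmless threshold check at $\beta=1$) that the paper leaves implicit. No gaps.
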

\begin{proof}
 From Lemma \ref{lemma4.6} and Lemma \ref{lemma4.7}, the desired result is obtained.
\end{proof}

\section{Numerical Experiments}
We now report results of numerical experiments including two-dimensional case which substantiate the analysis given earlier. The numerical errors are measured by the $l_\infty$ (maximum) norm.
\subsection{Numerical results for one-dimensional}
In order to get simpler benchmark solutions, we choose the exact solution of the nonlocal diffusion problem \eqref{2.1} as $u_\delta(x)=x^{2}\left(1-x^{2}\right)$. This naturally leads to a $\delta$-dependent right-hand side $f_{\delta}(x)=12x^{2}-2+\frac{6}{5}\delta^{2}$.

\subsubsection{Symmetric positive definite     system \eqref{2.18}}
\begin{table}[h]\fontsize{9.5pt}{12pt}\selectfont
\begin{center}
\caption{Convergence results of $\left\|u_{\delta,h}-u_{\delta}\right\|_\infty$ if $\delta$ is  not set as grid point for 1D}\vspace{5pt}
\begin{tabular*}{\linewidth}{@{\extracolsep{\fill}}*{9}{ccccccccc}}                                    \hline  
     $h$               & $\delta=1/3$      & Rate          & $\delta=\sqrt{h}$    & Rate       & $\delta=10h/3$      & Rate           & $\delta=h^2$    & Rate          \\ \hline
$\frac{1}{80}$         & 1.1745e-03        &               & 4.1686e-04           &            & 5.1007e-05          &                & 2.4500e-05      &               \\
$\frac{1}{320}$        & 3.0069e-04        & 0.982         & 4.8211e-05           & 1.556      & 3.1109e-06          & 2.017          & 1.5273e-06      & 2.001         \\
$\frac{1}{1280}$       & 7.5619e-05        & 0.995         & 5.1976e-06           & 1.606      & 1.9325e-07          & 2.004          & 9.5412e-08      & 2.000         \\ \hline
\end{tabular*}\label{tab:aaa2}
\end{center}
\end{table}
\begin{table}[h]\fontsize{9.5pt}{12pt}\selectfont
\begin{center}
\caption{Convergence results of $\left\|u_{\delta,h}-u_{\delta}\right\|_\infty$ if $\delta$ is set as grid point/$\delta< h$ for 1D}\vspace{5pt}
\begin{tabular*}{\linewidth}{@{\extracolsep{\fill}}*{9}{ccccccccc}}                                    \hline  
     $h$              & $\delta=1/4$      & Rate          & $\delta=\sqrt{h}$    & Rate       & $\delta=5h$         & Rate           & $\delta=h^2$    & Rate          \\ \hline
$\frac{1}{16}$        & 1.9297e-06        &               & 1.9297e-06           &            & 1.3190e-06          &                & 6.1954e-04      &               \\
$\frac{1}{64}$        & 7.8439e-09        & 3.971         & 2.7206e-08           & 3.074      & 6.5949e-08          & 2.161          & 3.8314e-05      & 2.007         \\
$\frac{1}{256}$       & 3.0947e-11        & 3.992         & 3.9959e-10           & 3.044      & 3.8895e-09          & 2.041          & 2.3869e-06      & 2.002         \\ \hline
\end{tabular*}\label{tab:aaa1}%
\end{center}
\end{table}

Table \ref{tab:aaa2} shows that the shifted-symmetric quadric polynomial collocation method \eqref{2.18} has the convergence rate $\mathcal{O}\left(h^{\min\left\{2,1+\beta\right\}}\right)$ if $\delta$ is not set as a grid point.
However, it shall recover $\mathcal{O}\left(h^{\max\left\{2,4-2\beta\right\}}\right)$ when $\delta$ is set as a grid point (see Table \ref{tab:aaa1}), which is in agree with Theorem \ref{theorem4.5}.

\subsubsection{ Nonsymmetric indefinite    system \eqref{adas2.22}}
\begin{table}[h]\fontsize{9.5pt}{12pt}\selectfont
\begin{center}
\caption{Convergence results of $\left\|u_{\delta,h}-u_{\delta}\right\|_\infty$ if $\delta$ is  not set as grid point for 1D}\vspace{5pt}
\begin{tabular*}{\linewidth}{@{\extracolsep{\fill}}*{9}{ccccccc}}                                    \hline  
     $h$               & $\delta=1/3$      & Rate          & $\delta=\sqrt{h}$    & Rate       & $\delta=10h/3$      & Rate                  \\ \hline
$\frac{1}{80}$         & 1.1787e-03        &               & 4.1864e-04           &            & 5.1796e-05          &                       \\
$\frac{1}{320}$        & 3.0095e-04        & 0.984         & 4.8261e-05           & 1.558      & 3.1486e-06          & 2.020             \\
$\frac{1}{1280}$       & 7.5635e-05        & 0.996         & 5.1990e-06           & 1.607      & 1.9556e-07          & 2.004                \\ \hline
\end{tabular*}\label{tab:add22}
\end{center}
\end{table}

\begin{table}[h]\fontsize{9.5pt}{12pt}\selectfont
\begin{center}
\caption{Convergence results of $\left\|u_{\delta,h}-u_{\delta}\right\|_\infty$ if $\delta$ is set as grid point/$\delta< h$ for 1D}\vspace{5pt}
\begin{tabular*}{\linewidth}{@{\extracolsep{\fill}}*{9}{ccccccc}}                                    \hline  
     $h$              & $\delta=1/4$      & Rate          & $\delta=\sqrt{h}$    & Rate       & $\delta=5h$         & Rate                 \\ \hline
$\frac{1}{16}$        & 8.2200e-06        &               & 8.2200e-06           &            & 1.3190e-06          &                        \\
$\frac{1}{64}$        & 3.2814e-08        & 3.984         & 1.1071e-07           & 3.107      & 6.5949e-08          & 2.213             \\
$\frac{1}{256}$       & 1.2884e-10        & 3.996         & 1.6055e-09           & 3.053      & 3.8895e-09          & 2.048                \\ \hline
\end{tabular*}\label{tab:add21}%
\end{center}
\end{table}
Tables \ref{tab:add22} shows that the standard  quadric polynomial collocation method \eqref{adas2.22} has the convergence rate $\mathcal{O}\left(h^{\min\left\{2,1+\beta\right\}}\right)$ if $\delta$ is not set as a grid point. And it shall recover $\mathcal{O}\left(h^{\max\left\{2,4-2\beta\right\}}\right)$ when $\delta$ is set as a grid point  in Table \ref{tab:add21}.
However, the discrete maximum principle is not satisfied for nonsymmetric indefinite    system \eqref{adas2.22}, which might be trickier for the stability analysis of the high-order numerical schemes.
\subsubsection{Asymptotic compatibility system \eqref{4.4}}
We now choose the exact solution of the local diffusion problem \eqref{4.4} as $u_0(x)=x^{2}\left(1-x^{2}\right)$ and we can find the right-hand side $f_{0}=12x^{2}-2$.
\begin{table}[h]\fontsize{9.5pt}{12pt}\selectfont
\begin{center}
\caption{Convergence results of $\left\|u_{\delta,h}-u_{0}\right\|_\infty$ with asymptotic compatibility for 1D.}\vspace{5pt}
\begin{tabular*}{\linewidth}{@{\extracolsep{\fill}}*{9}{ccccccccc}}                                    \hline  
     $h$              & $\delta=\sqrt{h}$ & Rate          & $\delta=\sqrt[4]{h}$      & Rate       & $\delta=10h/3$      & Rate           & $\delta=h^2$    & Rate          \\ \hline
$\frac{1}{80}$        & 1.6675e-03        &               & 2.2613e-02       &            & 2.1804e-04          &                & 2.4504e-05      &               \\
$\frac{1}{320}$       & 4.4945e-04        & 0.945         & 1.0763e-02       & 0.535      & 1.3298e-05          & 2.017          & 1.5273e-06      & 2.002         \\
$\frac{1}{1280}$      & 1.1576e-04        & 0.978         & 5.0377e-03       & 0.547      & 8.2608e-07          & 2.004          & 9.5413e-08      & 2.000         \\ \hline
\end{tabular*}\label{tab:aaa3}
\end{center}
\end{table}

Table \ref{tab:aaa3} shows that the shifted-symmetric quadric polynomial collocation method \eqref{4.4} has the convergence rate  $\mathcal{O}\left(h^{\min\left\{2,2\beta\right\}}\right)$, which is in agree with Theorem \ref{theorem4.8}.

\subsubsection{Symmetric positive definite     system \eqref{2.18} with weak regularity}
Consider the nonlocal diffusion problem \eqref{2.1} with a $\delta$-dependent right-hand side $f_{\delta}(x)=e^{\delta x \sin(\pi x)}$ and homogeneous boundary conditions $g_{\delta}(x)=0$.
Since the analytic solutions is unknown, the order of the convergence of the numerical results are computed by the following formula
\begin{equation*}
  {\rm Convergence ~Rate}=\frac{\ln \left(||u_{\delta,4h}-u_{\delta,h}||_\infty/||u_{\delta,h}-u_{\delta,h/4}||_\infty\right)}{\ln 4}.
\end{equation*}

\begin{table}[h]\fontsize{9.5pt}{12pt}\selectfont
\begin{center}
\caption{Convergence results of $\left\|u_{\delta,h}-u_{\delta}\right\|_\infty$ if $\delta$ is  not set as grid point for 1D}\vspace{5pt}
\begin{tabular*}{\linewidth}{@{\extracolsep{\fill}}*{9}{ccccccc}}                                    \hline  
     $h$               & $\delta=1/3$      & Rate          & $\delta=\sqrt{h}$    & Rate       & $\delta=10h/3$      & Rate                     \\ \hline
$\frac{1}{20}$         & 3.0314e-02        &               & 3.9558e-02           &            & 4.1090e-02          &                         \\
$\frac{1}{80}$         & 7.6372e-03        & 0.994         & 1.4563e-02           & 0.720      & 1.0150e-02          & 1.008                   \\
$\frac{1}{320}$        & 3.2475e-03        & 0.616         & 6.5622e-03           & 0.575      & 2.5371e-03          & 1.000              \\ \hline 
\end{tabular*}\label{tab:add2}
\end{center}
\end{table}

Table \ref{tab:add2} shows that the convergence rate of the shifted-symmetric quadric polynomial collocation method \eqref{2.18} decreases when $\delta$ is not a grid point.
In fact, it is not possible to reach high-order convergence even with the high order scheme   because of the weak regularity of the solution in the region close to  the boundaries.
In order to restore the desired convergence rate with the nonsmooth  data or weak regularity of the solution , it may need designed the corrected algorithms as  nonlocal fractional problems \cite {ShCh:2020}.

%
%
%

\subsection{Numerical results for two-dimensional}
In order to get simpler benchmark solutions, we choose the exact solution of the nonlocal diffusion problem (\ref{2.1}) as $u_\delta(x,y)=x^2(1-x^2)+y^2(1-y^2)$ with $\gamma_{{\delta}}(z)=\frac{3}{2{\delta}^{4}}$. This naturally leads to a $\delta$-dependent right-hand side $f_{\delta}(x,y)$ and $f_0(x,y)$ in \cite{LTTF:21}.


\begin{table}[h]\fontsize{9.5pt}{12pt}\selectfont
\begin{center}
\caption{Convergence results of $\|u_{\delta,h}-u_{\delta}\|_\infty$ with  general  $\delta$ for 2D}\vspace{5pt}
\begin{tabular*}{\linewidth}{@{\extracolsep{\fill}}*{9}{ccccccccc}}                                    \hline  
     $h$              & $\delta=1/4$      &Rate         & $\delta=\sqrt{h}$    &Rate         & $\delta=h^2$  &Rate   & $\delta=1/3$      &Rate     \\\hline
$\frac{1}{4}$         & 5.0933e-04        &             & 1.7804e-04           &                 &1.1969e-02     &      & 8.9981e-03        &     \\
$\frac{1}{16}$        & 2.3224e-06        & 3.888       & 2.3224e-06           &3.130        &7.3135e-04      &2.016     & 3.2925e-03        & 0.725    \\
$\frac{1}{64}$        & 9.4308e-09        & 3.972       & 3.2255e-08           &3.085        &4.5166e-05      &2.008      & 8.9298e-04        & 0.941  \\\hline 
\end{tabular*}\label{tab:aaa2d1}%
\end{center}
\end{table}
Table \ref{tab:aaa2d1} shows that the shifted-symmetric quadric polynomial collocation method \eqref{bd2.24} has the convergence rate
$\mathcal{O}\left(h^{\min\left\{2,1+\beta\right\}}\right)$ if $\delta$ is not set as  the grid point and $\mathcal{O}\left(h^{\max\left\{2,4-2\beta\right\}}\right)$
if $\delta$ is set as the grid point.

\section{Conclusions}
The stability of the piecewise quadratic polynomial (higher-order) collocation methods on nonlocal model  is not a trivial task, since the algebraic system is  nonsymmetric and indefinite \cite{CCNWL:20,Chen:1--30} and lack of a discrete maximum principle \cite{DDGGTZ:20,LTTF:21}.
In the first part of this work, we construct the shifted-symmetric piecewise quadratic polynomial collocation method for nonlocal model, which has the symmetric positive definite system and satisfies  the discrete maximum principle. Then a numerical solution of the presented method approximation  to both the continuum nonlocal solution  and its local limit has been proven strictly.
In the second part of this work, we design the numerical schemes when $\delta$ is not set as the grid point and independent of the computational mesh size $h$ \cite{WT:7730--7738}.
The detailed proof of the convergence analysis for the nonlocal models with the general  horizon parameter  are provided.
We remark that the error estimate is carried our by Taylor series with a strong assumption on the regularity of exact solution \cite{Du:2017}. It is of great interest to improve the error estimate
under the low regularity assumption. There is also interesting to design the numerical algorithms which restore or keep high-order schemes  if  $\delta$ is not set as the grid point.

\noindent {\bf Acknowledgments.} This work was supported by NSFC 11601206 and NSFC 11671165. The  authors wish to  thank  Prof. Qiang Du and Xiaochuan Tian for them valuable comments
and Dr. Rongjun Cao for simulating the multidimensional model.

%
%
%
%
%
{
\small
\bibliographystyle{ieee}

}

\end{document}